\pgfplotsset{compat=1.17}
\newtheorem{thm}{Theorem}[section]
\newtheorem{lem}[thm]{Lemma}
\newtheorem{prop}[thm]{Proposition}
\newtheorem{ex}[thm]{Example}
\newtheorem{cor}[thm]{Corollary}
\newtheorem{notation}[thm]{Notation}
\theoremstyle{definition}
\newtheorem{definition}[thm]{Definition}
\newtheorem{obs}[thm]{Observation}
\DeclareMathOperator{\Set}{\mathbf{Set}}
\DeclareMathOperator{\catd}{\mathcal{D}}
\DeclareMathOperator{\catG}{\mathcal{G}}
\newcommand\authormark[1]{\textsuperscript{#1}}
\begin{document}


\newcommand{\rtarr}{\longrightarrow}
\newcommand{\ltarr}{\longleftarrow}
\newcommand{\monoto}{\hookrightarrow}
\newcommand{\epito}{\twoheadrightarrow}
\newcommand{\fto}{\xrightarrow}
\newcommand{\fot}{\xleftarrow}
\newcommand{\ot}{\leftarrow}
\newcommand{\impl}{\Rightarrow}
\newcommand{\toeq}{\stackrel{\simeq}{\to}}
\newcommand{\mh}{\mbox{-}} 
\newcommand{\cn}{\colon}
\newcommand{\id}{\mathrm{id}}
\newcommand{\Id}{\mathrm{Id}}
\newcommand{\ob}{\mathrm{ob}}
\newcommand{\ev}{\mathrm{ev}}
\newcommand{\op}{\mathrm{\textrm{op}}}
\newcommand{\e}{\mathrm{e}}
\newcommand{\al}{\alpha}
\newcommand{\be}{\beta}
\newcommand{\ga}{\gamma}
\newcommand{\de}{\delta}
\newcommand{\pa}{\partial}   
\newcommand{\epz}{\varepsilon}
\newcommand{\ph}{\phi}
\newcommand{\phy}{\varphi}
\newcommand{\phz}{\varphi}
\newcommand{\et}{\eta}
\newcommand{\io}{\iota}
\newcommand{\ka}{\kappa}
\newcommand{\la}{\lambda}
\newcommand{\tha}{\theta}
\newcommand{\thz}{\vartheta}
\newcommand{\si}{\sigma}
\newcommand{\ze}{\zeta}
\newcommand{\om}{\omega}
\newcommand{\Ga}{\Gamma}
\newcommand{\GA}{\Gamma}
\newcommand{\La}{\Lambda}
\newcommand{\LA}{\Lambda}
\newcommand{\De}{\Delta}
\newcommand{\DE}{\Delta}
\newcommand{\Si}{\Sigma}
\newcommand{\SI}{\Sigma}
\newcommand{\Th}{\Theta}
\newcommand{\Om}{\Omega}
\newcommand{\OM}{\Omega}
\newcommand{\Ups}{\Upsilon}
\newcommand{\UPS}{\Upsilon}
\newcommand{\cc}{\mathbb{C}}
\newcommand{\wt}{\widetilde}
\newcommand{\strictslice}[2]{#1\hspace{-2pt}\sslash_{#2}^{\textrm{str}}}
\newcommand{\strongslice}[2]{#1\hspace{-2pt}\sslash_{#2}}
\newcommand{\conslice}[2]{#1\hspace{-2pt}\sslash_{#2}^{\textrm{adj}}}
\newcommand{\opconslice}[2]{#1\hspace{-2pt}\sslash_{#2}^{\textrm{opadj}}}
\renewcommand{\cal}{\mathcal}
\newcommand{\Simp}{\textbf{SimpSet}}
\newcommand{\omegagpd}{\omega\textbf{Gpd}}
\newcommand{\Gpd}{\textbf{Gpd}}
\newcommand{\dblGpd}{\textbf{dGpd}^!}
\newcommand{\Grp}{\textbf{Grp}}
\newcommand{\inftygrpd}{\infty\Gpd}
\newcommand{\FGp}{\textbf{F}\Grp}
\newcommand{\CubeSet}{\textbf{CubeSet}}
\newcommand{\Homset}{\mbox{Hom}}
\newcommand{\rhogpdX}{\rho(|X|)}
\newcommand{\rhogpdsigmaX}{\rho(|\Sigma^*X|)}
\newcommand{\rhogpdglobX}{\rho^\bigcirc(|X|)}
\newcommand{\rhogpdglobsigmaX}{\rho^\bigcirc(|\Sigma^*X|)}
\newcommand{\lo}{\mbox{local LHGF}}
\newcommand{\model}{\sigma(X,G)}
\newcommand{\fieldsXG}{\mathcal{M}(X,G)}
\newcommand{\fieldsYG}{\mathcal{M}(Y,G)}
\newcommand{\fieldsXK}{\mathcal{M}(X,K)}
\newcommand{\fieldsYK}{\mathcal{M}(Y,K)}
\newcommand{\fieldsXoneK}{\mathcal{M}(X_1,K)}
\newcommand{\fieldsXtwoK}{\mathcal{M}(X_2,K)}
\newcommand{\fieldsXGglob}{\mathcal{M}^\bigcirc(X,G)}
\newcommand{\fieldsXKglob}{\mathcal{M}^\bigcirc(X,K)}
\newcommand{\fieldsXKcube}{[P^1\rhogpdX,BK]}
\newcommand{\fieldsXGcube}{[P^1\rhogpdX,BG]}

\title{Higher homotopy and lattice gauge fields}


\author{Juan Orendain,\authormark{1} José Antonio Zapata,\authormark{2}}

\address{\authormark{1} Case Western Reserve University \\
\authormark{2}Centro de Ciencias Matemáticas, Universidad Nacional Autónoma de México campus Morelia.\\}

\email{\authormark{1}juan.orendain@case.edu \\ \authormark{2} zapata@matmor.unam.mx} 

\begin{abstract}
 We present a general formalism for higher dimensional versions of lattice gauge fields based on higher strict homotopy groupoids. 
 First, using the language of nonabelian Algebraic Topology, we define local lattice higher gauge fields. Then, we provide local-to-global principles for lattice higher gauge fields based on the HHSvK theorem. We prove that, under the correct assumptions, lattice higher gauge field as presented here, subsumes both the notion of extended lattice gauge field and other notions of higher lattice gauge field present in the literature.
\end{abstract}

\section{Introduction}

\noindent Gauge theory studies connections on principal bundles. A connection $A$ on a principal bundle $\pi:E\to M$, with structure Lie group $G$, represents the basic field, while the group $G$ represents the gauge group of the system. In locally trivial coordinates, the data of a gauge field on a principal bundle $\pi$ is equivalent to the data provided by the $A$ induced parallel transport 
along paths in a neighborhood $U$ in $M$. Precisely, if $\mathcal{P}_1(U)$ represents the path groupoid of $U$, i.e. the groupoid of smooth thin equivalence classes of smooth lazy paths in $U$, and $BG$ denotes the delooping groupoid of $G$, then the data of a connection $A$ on $\pi$ is equivalent, in locally trivial coordinates, to a smooth functor Hol$_A:\mathcal{P}_1(U)\to BG$, see \cite{BaezSchreiber,BaezHuerta,Pfeiffer}. The correspondence between connection 1-forms and parallel transport functors is referred to as the correspondence between the differential and the integral formulation of gauge theory in \cite{PfeifferGirelli}. Higher gauge theory studies parallel transport 
along higher dimensional paths. The notion of higher dimensional path can mean different things, and in this context the notion of gauge group of a higher dimensional gauge field can also mean different things. In \cite{BaezSchreiber,BaezHuerta} 2-dimensional paths are taken to mean thin homotopy equivalences of smooth maps, 
and gauge groups are taken to be Lie 2-groups. In this form 2-dimensional paths organize into a smooth 2-groupoid $\mathcal{P}_2(U)$, and 2-dimensional parallel transport is described as a strict 2-functor Hol$_2:\mathcal{P}_2(U)\to\mathcal{G}$ from the 2-path groupoid $\mathcal{P}_2(U)$ to a gauge Lie 2-group $\mathcal{G}$.

Lattice gauge fields determine parallel transport along paths in discretized versions of a manifold $M$
\footnote{In applications to particle physics, computer simulations usually model discretized spacetime as a cubulation of the 4-torus. Here we follow the convention 
of modelling discretized spacetime by a triangulation of $M$.}. 
Precisely, lattice gauge fields on a triangulation $X$ of of a manifold $M$, with gauge group $G$, describe the parallel transport of paths on $M$ that only traverse the 1-skeleton of $X$. Following the ideas of \cite{BaezSchreiber,Barret}, this should take the form of a functor $A$ from a groupoid of paths on $X$ to the delooping groupoid $BG$ of $G$. See \cite{Pfeiffer} for a formal presentation of these ideas, where discrete paths are to be considered as compatible words of edges of $X$ and their formal inverses, i.e where the path groupoid of $X$ is taken to be the free gropoid generated by the 1-skeleton of $X$. Higher lattice gauge fields describe parallel transport along higher dimensional paths on triangulations $X$ of the base manifold $M$ of a principal bundle. As is the case with smooth higher gauge fields, a discrete higher dimensional path can mean several things, as well as the gauge group of a theory. Lattice gauge fields in dimension 2 have been described in \cite{Pfeiffer} as strict 2-functors from the free 2-groupoid generated by the 2-skeleton of the triangulation $X$ to a gauge Lie 2-group $\mathcal{G}$, and thus 2-dimensional paths are respresented as pasting diagrams of simplexes in $X$ and their formal inverses, and 2-dimensional lattice gauge fields thus label these paths with the structure given by $\mathcal{G}$, in a way compatible with pasting. This treatment models the behaviour of a gauge field on a local trivialization after discretizing the corresponding patch of spacetime.

Extended lattice gauge fields, introduced in \cite{MenesesZapata1,MenesesZapata2}, are a type of discretized gauge field which extends usual lattice gauge field. Apart from characterizing parallel transport along paths fitting in the 1-skeleton of $X$, in \cite{MenesesZapata1} it is proven that the information provided by an extended lattice gauge field is enough to recover a principal $G$-bundle up to bundle equivalence over $|X|\approx M$. 
An extended lattice gauge field can be extracted from a gauge field in the continuum. Apart from extracting information about the parallel transport along links of the 1-skeleton of $X$, the field records the homotopy class of homotopies of parallel transport maps corresponding to certain homotopies of paths called simplexes of paths and associated to pairs of nested simplexes in $X$. This extra information determines a Cech cocycle. 
Thus, extended lattice gauge fields are a discretization of gauge fields capturing global features of the gauge field. 
In this sense, extended lattice gauge fields generalize standard lattice gauge fields differently than the higher gauge fields \cite{Pfeiffer} described in the previous paragraph, which are local in nature.

In this paper we provide a uniform treatment of local higher lattice gauge fields, subsuming the usual formulation of lattice gauge theory described above and higher dimensional lattice gauge fields as described in \cite{Pfeiffer}. 
Our fundamental object of study will be that of a local higher lattice gauge field, or $\lo$ for short. Our concept of $\lo$ is based on the theory of groupoids internal to cubical $\omega$-groupoids of \cite{BrownBook} and formally describes parallel transport along discrete cubical surfaces of arbitrary dimensions. The main ingredient of our formalism will be the homotopy cubical $\omega$-groupoid $\rho(|X|)$ of the geometric realization of a simplicial set $X$ playing the role of discretized spacetime, and thus higher dimensional paths will be modelled as thin homotopy classes of singular cubes lying on the corresponding skeleton of $X$. 
Our choice of cubical shapes for higher dimensional paths has been made both for taste and convenience, but the fact that the category of cubical $\omega$-groupoids is equivalent to the category of $\infty$-groupoids allows us to rephrase the theory in terms of globular higher paths. Our definitions and results will usually be supplemented with their equivalent globular version. 
We define global higher lattice gauge fields as compatible collections of local higher lattice gauge fields. 
We address the local-to-global problem and give a succinct solution for globular LHGFs making use of the Higher Homotopy Seifert Van Kampen Theorem of \cite{BrownBook}. 
Further, we provide a brief description of how globular global LHGFs define a notion of higher parallel transport. \textit{Note:} In our notion of $\lo$ we will follow the notation conventions of \cite{BrownBook,Pfeiffer,BaezSchreiber} when composing higher dimensional paths, i.e., we compose paths from left to right, as opposed to composing paths from right to left, as in \cite{KobayashiNomizu,BaezHuerta,MenesesZapata1,MenesesZapata2}. This will lead to parallel transport associated to global LHGFs to act on torsors on the right, as opposed to acting on the left, which in turn means that the base space of the associated fibre bundle will correspond to the orbits of a global left action, as opposed to the orbits of a right action. This can be remedied by considering our definitions on opposite internal $\omega$-groupoids. We have made the choice not to do this so our notation agrees with the notation of \cite{BrownBook} in this work, but we reserve the right to make this adjustment in future work.

\textbf{Plan of the paper:} In Section \ref{sec:Intro} we review the theory of cubical $\omega$-groupoids and $\infty$-groupoids, in Section \ref{s:Internal} we set the technical background for the rest of the paper by developing, to some extent, the theory of groups and groupoids internal to the category of cubical $\omega$-groupoids. In Section \ref{s:LHGF} we make use of the theory developed in Section \ref{s:Internal} to present our definition of $\lo$. We study $\lo$'s in terms of labeling of higher dimensional surfaces, we prove that when restricted to simplicial sets of dimensions 1 and 2, $\lo$s coincide with higher lattice gauge fields as in \cite{Pfeiffer}, and we provide sets of $\lo$s with a topology. Our definition of $\lo$ assumes contractibility of the corresponding piece of discretized spacetime. In Section \ref{s:GaugeEquivalence} we present a higher dimensional version of local gauge transformation, generalizing that of \cite{MenesesZapata2}, and we prove that local gauge transformations organize the set of $\lo$s into a groupoid. In section \ref{s:LocaltoGlobal} we define global LHGFs as compatible collections of local LHGFs. Here we drop the assumption that discretized spacetime should be contractible and instead consider good covers. Then we address the local to global problem. 
In Section \ref{s:ELGF} we prove that under the right geometric conditions on the simplicial set $X$ we can associate, to every global globular LHGF an ELGF as in \cite{MenesesZapata1,MenesesZapata2}; 
as a consequence, we prove that a global globular LHGF determines a principal $G$-bundle unique up to bundle equivalence. Finally, in Section \ref{s:parallel} we briefly explain how a globular global LHGF defines a notion of higher parallel transport along homtopy globes of paths.

\section{Cubical $\omega$-groupoids}\label{sec:Intro}
In this first section we recall the definition of cubical set, cubical set with connections, cubical $\omega$-groupoid, and we recall the construction of the homotopy $\omega$-groupoid associated to a filtered topological space. We will mostly follow the presentation and notational conventions in \cite{Algebraofcubes,BrownGlobular,GrandisMauri}. 
\subsection{Cubical Sets}\label{ss:cubicalsets}
The restricted cubical site $\mathbb{I}$ is the subcategory of \textbf{Set} whose objects are the elementary cubes $2^k,k\geq 0$ and whose morphisms are functions $f:2^k\to 2^m$ deleting entries and inserting 0's and 1's, not modifying the order of the remaining entries. If we identify the elementary cube $2^k$ with the integer trace of the standard cube $[0,1]^k$, then $\mathbb{I}$ is generated by the integer restriction of the maps $D_i^\alpha$ from $[0,1]^k$ to $[0,1]^{k+1}$ and $E_i$ from $[0,1]^{k+1}$ to $[0,1]^k$ defined by the formulas
\begin{equation}\label{Dialpha}
D_i^\alpha(v_1,...,v_k)=(v_1,...,v_{i-1},\alpha,v_i,...,v_k) ,   
\end{equation}
for $\alpha\in \left\{0,1\right\}$ and $i\in\left\{1,...,k+1\right\}$, and
\begin{equation}\label{Ei}
    E_i(v_1,....,v_{k+1})=(v_1,...,v_{i-1},v_{i+1},...,v_{k+1}),
\end{equation}
for $i\in \left\{1,...,k+1\right\}$. The maps $D_i^\alpha$ and $E_i$ satisfy the opposite cubical relations, forming a presentation of $\mathbb{I}$ \cite{GrandisMauri}. The intermediate cubical site $\mathbb{J}$ is the subcategory of \textbf{Set} generated by $\mathbb{I}$ and connection maps, i.e. $\mathbb{J}$ is the category generated by $\mathbb{I}$ together with the maps $G_i:[0,1]^{k+1}\to [0,1]^k$ defined by the equation:
\begin{equation}\label{Gi}
    G_i(v_1,...,v_{k+1})=(v_1,...,max\left\{v_i,v_{i+1}\right\},...,v_{k+1}),
\end{equation}
for $i\in\left\{1,...,k+1\right\}$. The maps $G_i$ satisfy the opposite connection relations, relating them to the maps $D_i,E_i$, and generating $\mathbb{J}$, see \cite{GrandisMauri}. 

A cubical set is a presheaf on the reduced cubical site $\mathbb{I}$. Concretely, a cubical set $K_*$ is a sequence of sets $K_k$, with $k\geq 0$, maps $\partial_i^\alpha:K_k\to K_{k-1}$ for $i\in\left\{0,...,k-1\right\}$ and $\alpha\in\left\{0,1\right\}$, maps $\epsilon_i:K_{k-1}\to K_k$ for $i\in\left\{0,...,k-1\right\}$, all satisfying the the cubical relations, see \cite{Algebraofcubes}. The maps $\epsilon_i$ and $\partial_i^\alpha$ are called the degeneracies and face maps of $K_*$. When it is clear from context that we speak of cubical sets we will drop the $\ast$ from the above notation, i.e. we will write $K$ and not $K_*$ for a cubical set. A cubical set with connections is an extension of a cubical set to a presheaf on the intermediate cubical site $\mathbb{J}$. Concretely, a cubical set with connections is a cubical set $K$ as above, together with additional degeneracy maps $\Gamma_i:K_{k-1}\to K_k$ for $i\in\left\{0,...,k-1\right\}$ satisfying the connection relations, c.f. \cite{Algebraofcubes}. The maps $\Gamma_i$ are called the connections of $K$. Connections on cubical sets allow us to speak of commutative higher dimensional cubes. The main example of a cubical set with connections is the following: Let $X$ be a topological space. The singular cubical set $KX$ associated to $X$ is the representable presheaf Hom$_{\mbox{\textbf{Top}}}(\_,X)$. Concretely, $K_kX$ is the set of all singular cubes on $X$, i.e. $K_kX$ is the set of all continuous maps from $[0,1]^k$ to $X$. Face, degeneracy and connection maps for $K_kX$ are defined by pulling back the corresponding generating morphisms of $\mathbb{J}$, i.e. by precomposing singular cubes with the generating maps $D_i^\alpha,E_i,G_i$ of $\mathbb{J}$. 
\subsection{Cubical $\omega$-groupoids}\label{ss:omega groupoids}
An $\omega$-groupoid is a cubical set with connections $K$ together with $k$ partially defined binary operations $+_i:K_k\times_{\partial_i^1,\partial_i^0}K_k\to K_k$ and $k$ 1-ary operations $-_i:K_k\to K_k$ for every $k\geq 1$ and $i\in\left\{0,...,k-1\right\}$, providing $K_k$ with $k$ groupoid structures, having $\epsilon_i$ as identities, each of these satisfying compatibility conditions with the cubical set structure on $K$, and all satisfying the exchange relations c.f. \cite{Algebraofcubes}.

The singular cubical set $KX$ of a topological space $X$ is naturally provided with composition and inversion defined by concatenation operations $+_i$ and the operations $-_i$ of retracing cubes in opposite coordinate directions. In order to make the operations $+_i$ and $-_i$ satisfy the axioms defining a cubical $\omega$-groupoid we must consider a quotient of $K_*X$ by an appropriate notion of filtered homotopy equivalence.

A filtration $X_*$ on a topological space $X$ is a sequence of inclusions $X_0\subseteq X_1\subseteq ...\subseteq X$ such that $X=\bigcup_{i=1}^\infty X_i$. A filtered space is a topological space $X$ together with a filtration $X_*$. We identify a filtered topological space with its filtration $X_*$. Every CW-complex $C$ is a filtered space through its skeletal filtration, i.e. $C_k$ is the $k$-dimensional skeleton of $C$. The standard $n$-dimensional cube $[0,1]^n$ is thus a filtered space $[0,1]^n_*$ with its skeletal filtration. We write $[0,1]^\infty$ for $\bigcup_{n=1}^\infty [0,1]^m$. The skeletal filtrations on $[0,1]^m$ thus define a filtration $[0,1]^\infty_*$ on $[0,1]^\infty$. A filtered map $X_*\to Y_*$ between filtered spaces $X_*$ and $Y_*$ is a sequence of continuous maps $f_k:X_k\to Y_k$ such that $f_{k+1}|_{X_k}= f_k$ for every $k$. Filtered spaces and filtered maps organize into a category \textbf{FTop}. 

Let $X$ be a filtered space. The singular cubical set $KX$ associated to $X$ is the presheaf  Hom$_{\mbox{\textbf{FTop}}}(\_,X)$. The set of $k$-dimensional cubes of $KX$ is thus the set of filtered maps $f:[0,1]^m_*\to X_*$. Given a filtered space $X_*$ and singular cubes $f$ and $g$ on $X$, \textbf{filtered homotopy rel vertices} from $f$ to $g$, is a continuous map $H:[0,1]^\infty\times [0,1]\to Y$ such that for every $k$: $H([0,1]^k\times [0,1])\subseteq X_k$, $H|_{[0,1]^0\times [0,1]}$ is constant, $H|_{[0,1]^k\times\left\{0\right\}}=f_k$, and $H|_{[0,1]^k\times\left\{1\right\}}=g_k$. Of particular interest in this paper is filtered homotopy of singular cubes on the geometric realization $|X|$ of a simplicial set, filtered by its skeletal filtration. Filtered homotopy rel vertices in that case is usually referred to as thin homotopy rel vertices.

Filtered homotopy rel vertices defines an equivalence relation $\equiv$ on $KX$. Write $\rho(X)$ for $RX/\equiv$ and write $\langle f\rangle$ for the equivalence class of a singular filtered cube $f$ on $X$ modulo $\equiv$. We call equivalence classes $\langle f\rangle$ of singular cubes $f$ on $X$ modulo $\equiv$, homotopy cubes. Face maps, degeneracies and connections on $RX$ induce face maps, degeneracies, and connections on $\rho(X)$. Given homotopy cubes $\langle f\rangle$ and $\langle g\rangle$ in $\rho(X)_m$ such that $\partial_i^1 f\equiv \partial_i^0 g$, the sum $\langle f\rangle +_i \langle g\rangle$ is the homotopy cube $\langle f +_i\nu+_i g\rangle$ where $\nu$ is an $m$-dimensional cube in $KX$ such that $\partial_i^0\nu=\partial_i^1 f$, $\partial_i^1\nu=\partial_i^0 g$ and such that $\nu$ is degenerate. These operations are well defined and provide $\rho(X)$ with the structure of an $\omega$-groupoid, see \cite{BrownBook}. The $\omega$-groupoid $\rho(X)$ is the \textbf{homotopy $\omega$-groupoid} of the filtered space $X$. 

The category of cubical sets \textbf{CubeSet} is the category of presheaves $Psh(\mathbb{I})$ on the restricted cubical site $\mathbb{I}$, i.e. the objects of \textbf{CubeSet} are cubical sets, and morphisms are natural transformations. Concretely, given cubical sets $K$ and $K'$, morphisms, in \textbf{CubeSet}, from $K$ to $K'$, are sequences $A$ of functions $A_k:K_k\to K'_k$ satisfying the equations: $\partial_i^{'\alpha} A_k=A_{k-1}\partial_i^\alpha$ and $\epsilon'_iA_{k-1}=A_k\epsilon_i$ for every $k\geq 1$, $i\in\left\{0,...,k-1\right\}$ and $\alpha\in\left\{0,1\right\}$. Likewise, the category of cubical sets with connections \textbf{CubeSet}$^*$ is the category of presheaves $Psh(\mathbb{J})$ on the intermediate cubical site $\mathbb{J}$, i.e. objects of \textbf{CubeSet}$^*$ are cubical sets with connections, and morphisms are natural transformations. $\omega$-groupoids are groupoids internal to \textbf{CubeSet}$^*$. Internal morphisms thus organize $\omega$-groupoids with connections into a category $\omega$\textbf{Gpd}. Given two $\omega$-groupoids with connections $K$ and $K'$, morphisms, in $\omega$\textbf{Gpd}, from $K$ to $K'$, are morphisms $A$, in \textbf{CubeSet}$^*$, intertwining the groupoid operations $+_i$ and $-_i$ of $K$ and $K'$. 

\subsection{$\infty$-groupoids}\label{ss:infinity}
The elementary $n$-globe $G^n$ is the $n$-dimensional disk $D^n$ together with the cell decomposition
\begin{equation}\label{eq:globulardec}
D^n=e_1^{\pm 1}\cup ...\cup e_n^{\pm 1}\cup e_n
\end{equation}
where $e_i^{\pm 1}$ is the set of points $x=(x_1,...,x_n)\in D^n$ such that $||x||=1$, $x_j=0$ for every $j<n-i$ and $x_{n-i}>0$. The globular site $\mathbb{G}$ is the category whose objects are the elementary $n$-globes $G^n$ and whose morphisms are generated by maps $\overline{d}_i^{\pm 1}:G^i\to G^n$ embedding $G^i$ in $G^n$ along the decomposition (\ref{eq:globulardec}), and projection maps $\overline{s}_i:G^n\to G^i$ such that $\overline{s}_i(x_1,...,x_n)=(x_1,...,x_i)$. The maps $\overline{d}_i^{\pm 1}$ and $\overline{s}_i$ satisfy the opposite globular relations, see \cite{BrownGlobular}. 

A globular set is a presheaf on $\mathbb{G}$. Concretely, a globular set is a sequence $G_*$ of sets $G_k$, with $k\geq 0$, and maps $d_i^{\pm 1}:G_n\to G_i$ and $s_i:G_i\to G_n$ for every $n\geq 0$ and every $i\in\left\{1,...,n\right\}$, satisfying the globular relations. A cube $a$ in a cubical set $K$ is a \textbf{globular cube} if $\partial_i^{\alpha}a$ is in the image of $\epsilon_i^i$ for every $i$. The collection $HK$ of globular cubes of a cubical set $K$ inherits, from the cubical structure of $K$, the structure of a globular set. 

An $\infty$-groupoid is a globular set $G$ together with partially defined operations $+_i:G_n\times_{d_i^+,d_i^-}G_n\to G_n$ and $-_i:G_n\to G_n$ for every $n\geq 0$ and $i\in\left\{0,...,n-1\right\}$, providing $G_n$ with $n$-groupoid structures, each having $s_i$ as identity, and satisfying compatibility conditions with the globular set structure on $G$ and the exchange relations. $\infty$-groupoids organize into a category $\infty$\textbf{Gpd} in a way similar to $\omega \textbf{Gpd}$.
 
 Given an $\omega$-groupoid $K$, the globular set $H K$ associated to $K$ inherits, from $K$, the structure of $\infty$-groupoid. In particular, every filtered space is associated a homotopy $\infty$-groupoid $\rho^\bigcirc(X)$, the $\infty$-groupoid $H \rho(X)$ associated to the homotopy $\omega$-groupoid $\rho(X)$. The correspondence $K\mapsto H K$ extends to an equivalence $H:\omega\mbox{\textbf{Gpd}}\to\infty\mbox{\textbf{Gpd}}$, see \cite{BrownGlobular}.

\section{Internal groups and groupoids}\label{s:Internal}
\noindent In this section we study groups and groupoids internal to $\omega$-groupoids, and their internal morphisms.
\subsection{$\omega$-groupoids of paths}\label{ss:paths}
The category \textbf{CubeSet} is symmetric monoidal with tensor product operation $K\otimes K'$ defined by the formula $(K\otimes K')_n=\bigsqcup_{m+k=n}K_m\times K_k$. Boundary and degeneracy maps are defined analogously. Connections and $\omega$-groupoid structures are compatible with the above monoidal structure, and thus \textbf{CubeSet}$^*$ and $\omega$\textbf{Gpd} inherit from \textbf{CubeSet}, symmetric monoidal structures. 
The above monoidal structure on $\omega$\textbf{Gpd} is closed, with internal hom object $[K,K']$ given by the formula
\begin{equation}\label{eq:InternalHom}
[K,K']_m=\mbox{Hom}_{\omega\mbox{\textbf{Gpd}}}(P^mK,K')
\end{equation}
\noindent where $P^mK$ is the left path $\omega$-groupoid associated to $K$, i.e. $P^{m}K_k=K_{k+m}$, $\hat{\partial_i^\alpha}:P^mK_k\to P^mK_{k-1}$ is the $m+i,\alpha$-th face function $\partial^\alpha_{m+i}$ of $K_{m+k}$, and $\hat{\epsilon_i}$, $\hat{+}_i$ are defined similarly, c.f. \cite{BrownBook}. Boundary, degeneracy, connection, composition, and inverse maps in the internal Hom $\omega$-groupoid $[K,K']$, are defined by pushing forward the corresponding leftover maps of $P^mK$ along morphisms. In the rest of the paper we will omit the $\hat{}$ part of the operations defined on $P^1K$ above.
\subsection{Groupoids internal to $\omega$-groupoids}\label{ss:Internalgroupoids}
\noindent We consider groupoids strictly internal to $\omega$\textbf{Gpd} (considered as a Cartesian symmetric monoidal category, as opposed to the monoidal structure defined in the previous subsection). It will be useful to record the data of groupoid internal to $\omega$\textbf{Gpd} in certain detail. An internal groupoid $K^\bullet$ in $\omega$\textbf{Gpd} is a pair $(K^1,K^0)$ of $\omega$-groupoids, together with source and target morphims $s,t:K^1\to K^0$, unit morphism $u:K^0\to K^1$, composition morphism $\odot:K^1\times^{t,s}_{K^0}K^1\to K^1$ and inverse morphism $(\_)^{-1}:K^1\to K^1$, satisfying the usual conditions source, target, unit, composition, and inversion operations on a groupoid satisfy, on the nose. We call $K^0$ and $K^1$ the $\omega$-groupoid of objects, and the $\omega$-groupoid of morphisms, of $K^\bullet$ respectively. We will usually omit the $\bullet$ part of the above notation, and simply write $K$ for a groupoid internal to $\omega\Gpd$. We will also write $K=(K^1,K^0)$. The following Lemma provides our main example of groupoids internal to $\omega$\textbf{Gpd}.

\begin{lem}\label{lem:PathsInternalGroupoid}
Let $K\in\omegagpd$. The pair $(P^1K,K)$ inherits, from $P^1K$ and $K$ the structure of a groupoid internal to $\omega$\textbf{Gpd}. 
\end{lem}
\begin{proof}
Let $K$ be an $\omega$-groupoid. The sequences formed by the maps $\partial_0^\alpha:P^1K_m\to K_m$, $\epsilon_0:K_m\to P^1K_m$, $+_0:P^1K_m\times_{K_m}P^1K:m\to P^1K_m$, and $-_0:P^1K_m\to P^1K_m$ defined for every $m\geq 0$ assemble into morphisms $\partial_0^\alpha:P^1K\to K$, $\epsilon_0:K\to P^1K$, and $+_0:P^1K\times_{K} P^1K\to P^1K$ and $-_0:P^1K\to P^1K$. The fact that these morphisms define on $(P^1K,K)$ the structure of a groupoid internal to $\omega$\textbf{Gpd} follows directly from the fact that both $P^1K$ and $K$ are $\omega$-groupoids.
\end{proof}

\noindent We will denote the internal groupoid defined in Lemma \ref{lem:PathsInternalGroupoid} $P^1K$, or $P^1K^\bullet$ when differentiation with $\omega$-groupoid $P^1K$ is needed. Groupoids internal to $\omega$\textbf{Gpd} organize into a category in the obvious way: Given two internal groupoids $K=(K^1,K^0)$ and $L =(L^1,L^0)$, an internal morphism from $K$ to $L$ is a pair $A=(A^1,A^0)$ where $A^1:K^1\to L^1$ and $A^0:K^0\to L^0$ are morphism of $\omega$-groupoids satisfying the obvious compatibility relations between the internal groupoid structures of $K$ and $L$. The map associating to every $\omega$-groupoid $K$, the path $\omega$-groupoid $P^1K$ is an endofunctor $P^1:\omega\mbox{\textbf{Gpd}}\to\omega\mbox{\textbf{Gpd}}$. The endofunctor $P^1$ extends to a functor $P^{1\bullet}:\omega\mbox{\textbf{Gpd}}\to \textbf{\mbox{Gpd}}_{\omega\mbox{\textbf{Gpd}}}$ associating to every $\omega$-groupoid $K$ the internal groupoid $P^1K^\bullet$ of Lemma \ref{lem:PathsInternalGroupoid}.

\begin{prop}\label{prop:P1InternalGroupoid}
    Let $K^\bullet =(K^1,K^0)$ be a groupoid internal to $\omega$\textbf{Gpd}. The pair $P^1K^\bullet=(P^1K^1,P^1K^0)$ inherits, from $K$, the structure of a groupoid internal to $\omega$\textbf{Gpd}. 
\end{prop}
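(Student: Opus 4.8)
The plan is to transport the internal groupoid structure of $K^\bullet$ along the functor $P^1$, the only real content being that $P^1$ preserves the fibre products appearing in the internal groupoid axioms. Recall that $K^\bullet$ consists of $\omega$-groupoids $K^1,K^0$ together with morphisms of $\omega$-groupoids $s,t\colon K^1\to K^0$, $u\colon K^0\to K^1$, $\odot\colon K^1\times^{t,s}_{K^0}K^1\to K^1$, and $(\_)^{-1}\colon K^1\to K^1$, satisfying the source, target, unit, composition, and inversion axioms, all of which are commuting diagrams in $\omega\textbf{Gpd}$. Applying $P^1\colon\omega\textbf{Gpd}\to\omega\textbf{Gpd}$ to each of these morphisms produces the candidate structure maps on the pair $(P^1K^1,P^1K^0)$.

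First I would record how $P^1$ acts on a morphism: a morphism $A=(A_k)_{k}$ of $\omega$-groupoids is sent to $P^1A=(A_{k+1})_{k}$, which intertwines the reindexed cubical and groupoid operations (those in directions $1,2,\dots$) and is therefore again a morphism of $\omega$-groupoids. Thus $P^1$ is functorial on objects and morphisms, so it carries every commuting diagram to a commuting diagram; in particular the unit, associativity, and inverse diagrams for $K^\bullet$ are sent to the corresponding diagrams for the candidate structure on $(P^1K^1,P^1K^0)$.

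The point that needs justification — and the main obstacle — is that the source of $P^1\odot$ is the correct object, i.e.
\begin{equation}\label{eq:P1pb}
P^1\!\bigl(K^1\times^{t,s}_{K^0}K^1\bigr)=P^1K^1\times^{P^1t,P^1s}_{P^1K^0}P^1K^1 ,
\end{equation}
together with the analogous identities for the iterated fibre products of composable tuples entering associativity. This reduces to the fact that finite limits in $\omega\textbf{Gpd}$ are computed levelwise: since an $\omega$-groupoid is an essentially algebraic structure on a cubical set with connections, the forgetful functor $\omega\textbf{Gpd}\to\textbf{CubeSet}^{*}=Psh(\mathbb{J})$ creates limits, and limits of presheaves are computed objectwise. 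Hence $K^1\times^{t,s}_{K^0}K^1$ has underlying degree-$k$ set $K^1_k\times_{K^0_k}K^1_k$. Evaluating both sides of \eqref{eq:P1pb} in degree $k$ and using $P^1(-)_k=(-)_{k+1}$, $(P^1s)_k=s_{k+1}$, $(P^1t)_k=t_{k+1}$ gives $K^1_{k+1}\times_{K^0_{k+1}}K^1_{k+1}$ on both sides, with matching cubical and $\omega$-groupoid structure.

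With \eqref{eq:P1pb} established, $P^1\odot$ is a morphism $P^1K^1\times^{P^1t,P^1s}_{P^1K^0}P^1K^1\to P^1K^1$, and the maps $P^1s$, $P^1t$, $P^1u$, $P^1\odot$, $P^1((\_)^{-1})$ satisfy the internal groupoid axioms, being the $P^1$-images of the axioms for $K^\bullet$. This realizes $(P^1K^1,P^1K^0)$ as a groupoid internal to $\omega\textbf{Gpd}$ whose structure is inherited from that of $K$. Equivalently, the whole argument is an instance of the general principle that any finite-limit-preserving functor sends internal groupoids to internal groupoids, once $P^1$ has been seen to preserve the relevant pullbacks.
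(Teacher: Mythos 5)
The paper states this proposition without any proof (it appears immediately after Lemma \ref{lem:PathsInternalGroupoid} and is left to the reader), so there is nothing to contrast your argument with; on its own terms your proof is correct. You correctly isolate the one point with actual content — that $P^1$ preserves the fibre products $K^1\times^{t,s}_{K^0}K^1$ occurring in the internal-groupoid axioms — and your justification (limits in $\omega\textbf{Gpd}$ are created from $Psh(\mathbb{J})$ and hence computed levelwise, while $P^1$ is a degree shift, so both sides of your identity are $K^1_{k+1}\times_{K^0_{k+1}}K^1_{k+1}$ in degree $k$) is sound; the rest is functoriality of $P^1$ applied to the commuting diagrams expressing the axioms. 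This matches what the authors evidently intend, namely the degree-shift argument of Lemma \ref{lem:PathsInternalGroupoid} transported along the structure maps of $K^\bullet$.
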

\noindent We will call $P^1K^\bullet$ the path internal groupoid associated to the internal groupoid $K$. We can iterate the construction of $P^1K^\bullet$ to the construction of higher path internal groupoids $P^kK^\bullet$ in the obvious way.

\begin{prop}\label{prop:EnrichedInternalGroupoids}
Let $K, L\in\Gpd_{\omega\Gpd}$. There is an $\omega$-groupoid $[K,L]$ where 
\[[K,L]_k=Hom_{\Gpd_{\omega\Gpd}}(P^kK^\bullet,K^\bullet)\]
\noindent for every $k\geq 0$, making \textbf{Gpd}$_{\omega\mbox{\textbf{Gpd}}}$ into a $\omega$\textbf{Gpd}-enriched category.
\end{prop}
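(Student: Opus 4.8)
The plan is to construct the $\omega$-groupoid structure on the hom-sets $[K,L]_k = \mathrm{Hom}_{\Gpd_{\omega\Gpd}}(P^kK^\bullet, K^\bullet)$ by transporting structure from the ambient closed monoidal structure on $\omega\Gpd$ established in Section \ref{ss:paths}, and then checking that the internal-groupoid conditions are respected at each level. First I would note that an internal morphism $A\cn P^kK^\bullet \to L^\bullet$ is a pair $A = (A^1, A^0)$ of $\omega$-groupoid morphisms $A^1\cn P^kK^1 \to L^1$, $A^0\cn P^kK^0 \to L^0$ commuting with source, target, unit, composition, and inverse on the nose. Since $P^kK^i = P^k$ applied to the object- and morphism-$\omega$-groupoids, and since by \eqref{eq:InternalHom} we have $[K^i,L^i]_k = \mathrm{Hom}_{\omega\Gpd}(P^kK^i, L^i)$, the assignment $A \mapsto (A^1,A^0)$ exhibits $[K,L]_k$ as a subset of $[K^1,L^1]_k \times [K^0,L^0]_k$, namely the subset cut out by the internal-groupoid compatibility equations.

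Next I would carry the face maps $\partial_i^\alpha$, degeneracies $\epsilon_i$, connections $\Gamma_i$, and the partial compositions $+_i$ and inverses $-_i$ of the internal-hom $\omega$-groupoids $[K^1,L^1]$ and $[K^0,L^0]$ across this inclusion, working componentwise: for $A=(A^1,A^0)$ set $\partial_i^\alpha A = (\partial_i^\alpha A^1, \partial_i^\alpha A^0)$, and similarly for the other operations. The crux is then a \emph{well-definedness} check: I must verify that each of these operations sends a \emph{compatible} pair (one satisfying the internal-groupoid equations) to another compatible pair. This amounts to checking that the boundary, degeneracy, connection, composition, and inverse maps of the internal-hom $\omega$-groupoid are computed by pushing forward the leftover operations of $P^k$ along morphisms (as recorded after \eqref{eq:InternalHom}), and that these pushforwards commute with $s,t,u,\odot,(\_)^{-1}$ because $s,t,u,\odot,(\_)^{-1}$ are themselves $\omega$-groupoid morphisms, hence intertwine the $P^k$-level operations. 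In other words, applying $\partial_i^\alpha$ componentwise to the two sides of, say, the source-compatibility equation $s \circ A^1 = A^0 \circ P^k(s)$ yields the same equation for $\partial_i^\alpha A$, because $s$ and $P^k(s)$ are strict and $\partial_i^\alpha$ is natural.

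Having confirmed closure, the $\omega$-groupoid axioms for $[K,L]$ (the cubical relations among faces/degeneracies/connections, and the multiple groupoid and exchange laws for the $+_i$) hold automatically, since each is an identity among the componentwise operations and these identities already hold in $[K^1,L^1]$ and $[K^0,L^0]$; a restriction of a structure satisfying equational axioms to an operation-closed subset satisfies the same axioms. Finally, to obtain the $\omega\Gpd$-enrichment I would produce composition morphisms $[L,M]\otimes[K,L]\to[K,M]$ and unit morphisms $I\to[K,K]$ in $\omega\Gpd$, mirroring the closed structure of Subsection \ref{ss:paths} but at the level of internal groupoids, and check associativity and unitality; these are inherited from the enrichment already present on $\omega\Gpd$ itself. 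The main obstacle I expect is the closure bookkeeping in the second paragraph: one must be careful that the partial composition $+_i$ of two compatible morphisms is again compatible, which relies on $\odot$, $u$, and $(\_)^{-1}$ being \emph{strict} internal structure morphisms so that they commute with the level-wise $+_i$ inherited from $P^k$; the strictness hypothesis ("on the nose") in the definition of internal groupoid is exactly what makes this go through, and I would flag it as the essential ingredient.
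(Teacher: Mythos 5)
The paper states Proposition \ref{prop:EnrichedInternalGroupoids} without any proof, so there is nothing to compare your argument against; on its own merits, your proof is sound and is the natural one. You correctly read an internal morphism $P^kK^\bullet\to L^\bullet$ as a compatible pair, exhibit $[K,L]_k$ as the subset of $[K^1,L^1]_k\times[K^0,L^0]_k$ cut out by the (levelwise) compatibility equations, and reduce everything to the closure check, which follows from naturality of the internal-hom operations in both variables together with strictness of $s,t,u,\odot,(\_)^{-1}$; equivalently, $[K,L]$ is a levelwise equalizer of maps between products of internal homs, and equalizers in the equationally presented category $\omega\mbox{\textbf{Gpd}}$ inherit the structure. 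Two small points deserve explicit mention if you write this up: (i) the statement as printed has a typo --- the codomain should be $L^\bullet$, not $K^\bullet$, as you tacitly assume; (ii) the closure of $+_i$ under the $\odot$-compatibility equation involves the pullback $L^1\times_{L^0}L^1$, so you should note that the internal hom $[P^kK^1,-]$ preserves this pullback (it is a right adjoint by the closed structure of Subsection \ref{ss:paths}), which is what lets you push $\odot_L$ past the componentwise operations. The enrichment paragraph is the thinnest part: you should at least record that the composition morphism of the self-enrichment of $\omega\mbox{\textbf{Gpd}}$ carries compatible pairs to compatible pairs (functoriality of composition of internal morphisms), so that it restricts to $[L,M]\otimes[K,L]\to[K,M]$; associativity and unitality are then inherited. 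With those additions the argument is complete.
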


\subsection{Groups internal to $\omega$-groupoids.}\label{ss:Internalgroups}
We also consider groups internal to $\omega$\textbf{Gpd}. As we did in subsection \ref{ss:Internalgroupoids}, we record the definition of group internal to $\omega$\textbf{Gpd} in some detail. A group internal to $\omega$\textbf{Gpd} is an $\omega$-groupoid $K$ together with a binary operation morphism $\odot:K\times K\to K$, a 1-ary operation morphism $\_^{-1}:K\to K$, and a 0-ary operation morphism $1:\mbox{\textbf{0}}\to K$, satisfying the usual axioms of a group, on the nose. A morphism of internal groups between internal groups $K$ and $L$ is a morphism of $\omega$-groupoids $A:K\to L$ intertwining the corresponding internal operations on $K$ and $L$. Internal groups and their morphisms organize into a category \textbf{Gp}$_{\omega\mbox{\textbf{Gpd}}}$. The cartesian monoidal category structure on $\omega$\textbf{Gpd} extends to a cartesian monoidal structure on \textbf{Gp}$_{\omega\mbox{\textbf{Gpd}}}$ in an obvious way.

A filtered topological group is a topological group $G$ together with a filtration $G_*$ of $G$ as a topological space, such that $G_i$ is a subgroup of $G_{i+1}$ for every $i$. As we do with filtered topological spaces, we identify a filtered topological group with its underlying group $G$. Every topological group $G$ has a trivial filtered group $G_*$ associated to it, where $G_i=G$ for every $i$. Filtered topological groups are precisely groups internal to \textbf{FTop} and thus organize into a cartesian monoidal category, which we write as \textbf{Gp}$_{\mbox{\textbf{FTop}}}$. Given a filtered topological group $G$, the group operations of $G$ extend to filtered maps $\cdot:G\times G\to G$, $\_^{-1}:G\to G$ and $1:\mbox{\textbf{0}}\to K$ on $G$. Applying the functor $\rho:\mbox{\textbf{FTop}}\to\omega\mbox{\textbf{Gpd}}$ to the filtered topological space underlying $G$ and the filtered maps $\cdot:G\times G\to G$, $\_^{-1}:G\to G$ and $1:\mbox{\textbf{0}}\to K$ we obtain an $\omega$-groupoid $\rho(G)$ together with morphisms of $\omega$-groupoids $\odot:\rho(G)\times\rho(G)\to\rho(G)$, $\_^{-1}:\rho(G)\to\rho(G)$ and $1:\mbox{\textbf{0}}\to \rho(G)$, satisfying the conditions defining a group. We obtain the following lemma.

\begin{lem}\label{Lem:RhoGInternalGroupoid}
Let $G$ be a filtered group. The group operations on $G$ provide $\rho(G)$ with the structure of a group internal to $\omega$\textbf{Gpd}. Moreover, the assignment $G\mapsto \rho(G)$ extends to a functor from $\mbox{\textbf{Gp}}_{\mbox{\textbf{Ftop}}}$ to $\omega$\textbf{Gpd}.
\end{lem}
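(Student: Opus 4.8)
The plan is to deduce the statement from the general categorical principle that a finite-product-preserving functor carries group objects to group objects. By the discussion preceding the statement, a filtered topological group is precisely a group object in the cartesian monoidal category $\mathbf{FTop}$, and a group internal to $\omegagpd$ is precisely a group object in the cartesian monoidal category $\omegagpd$. Now if $F\colon\mathcal{C}\to\mathcal{D}$ is any functor between categories with finite products that preserves them, then $F$ sends the structure maps $\cdot$, $(\_)^{-1}$, $1$ of a group object, together with the defining commutative diagrams for associativity, unit and inverse — all of which are assembled solely out of finite products — to the structure maps and diagrams of a group object in $\mathcal{D}$; moreover $F$ takes morphisms of group objects to morphisms of group objects, and does so functorially, hence induces a functor $\mathbf{Gp}_{\mathcal{C}}\to\mathbf{Gp}_{\mathcal{D}}$. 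Everything therefore reduces to checking that $\rho\colon\mathbf{FTop}\to\omegagpd$ preserves finite products.

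This product-preservation is the key step, and the only genuine subtlety is to use the correct product. By the definition of filtered map, the categorical product of $X_*$ and $Y_*$ in $\mathbf{FTop}$ is the \emph{levelwise} product, with $(X\times Y)_k=X_k\times Y_k$; this must be distinguished from the geometric/tensor filtration $\bigcup_{i+j=k}X_i\times Y_j$, for which $\rho$ is only monoidal in the sense of \cite{BrownBook}, not cartesian. With the levelwise filtration, a filtered singular cube $[0,1]^n_*\to(X\times Y)_*$ is exactly a pair consisting of a filtered cube into $X_*$ and a filtered cube into $Y_*$, since a map carries the $k$-skeleton of the cube into $X_k\times Y_k$ if and only if each of its two components does so. Hence $K(X\times Y)=KX\times KY$, and the face, degeneracy, connection, concatenation and retracing operations, being defined by precomposition in the source cube, all act componentwise. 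Finally a filtered homotopy rel vertices $H=(H_1,H_2)$ into $X\times Y$ satisfies the defining conditions if and only if each $H_i$ does, so the congruence $\equiv$ decomposes as $\equiv\times\equiv$. Passing to the quotient shows that the canonical map $\rho(X\times Y)\to\rho(X)\times\rho(Y)$ is an isomorphism; together with $\rho(\mathbf{0})\cong\mathbf{0}$ for the terminal (one-point) filtered space, this gives that $\rho$ preserves finite products.

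Assembling the pieces, applying $\rho$ to the filtered maps $\cdot\colon G\times G\to G$, $(\_)^{-1}\colon G\to G$ and $1\colon\mathbf{0}\to G$ and precomposing with the inverse of the canonical isomorphism $\rho(G\times G)\cong\rho(G)\times\rho(G)$ produces morphisms $\odot$, $(\_)^{-1}$, $1$ of $\omega$-groupoids, exactly as indicated in the paragraph before the statement; their group axioms follow by applying $\rho$ to the corresponding diagrams for $G$ and invoking functoriality together with naturality of the comparison isomorphism. This exhibits $\rho(G)$ as a group internal to $\omegagpd$. Functoriality of $G\mapsto\rho(G)$ is then immediate: a morphism of filtered groups $\phi\colon G\to H$ induces $\rho(\phi)$, which intertwines the internal operations by naturality of the product comparison, and $\rho$ preserves identities and composition. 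The main obstacle is thus concentrated entirely in the product-preservation step, and there the essential point is to fix the levelwise categorical product in $\mathbf{FTop}$ rather than the tensor filtration; once that is done the remaining verification is the componentwise bookkeeping sketched above.
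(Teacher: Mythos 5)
Your proof is correct and follows essentially the same route as the paper, which (in the paragraph preceding the lemma) simply applies $\rho$ to the structure maps $\cdot$, $(\_)^{-1}$, $1$ of the group object $G$ in $\mathbf{FTop}$ and asserts the result. The one thing you add — and it is a genuine improvement — is the explicit verification that $\rho$ preserves finite products for the \emph{levelwise} (categorical) product filtration, as distinguished from the tensor filtration of \cite{BrownBook}; the paper leaves this step tacit.
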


\noindent Lemma \ref{Lem:RhoGInternalGroupoid} provides a way of generating groups internal to $\omega$\textbf{Gpd} from filtered topological groups. The following Lemma says that the functor $\rho$ is isomorphism-dense on internal groups, i.e. every group internal to $\omega$\textbf{Gpd} can be presented, up to isomorphisms, as an internal groupoid as in the statement of Lemma \ref{Lem:RhoGInternalGroupoid}.
\begin{lem}\label{lem:InternalGroups}
Let $K$ be a group internal to $\omega$\textbf{Gpd}. There exists a filtered group $G$ such that $K$ is isomorphic to $\rho(G)$.
\end{lem}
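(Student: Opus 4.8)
The plan is to realize $K$ topologically by running it through the classifying-space machinery for $\omega$-groupoids of \cite{BrownBook}. Recall from there that $\omega\mathbf{Gpd}$ is equivalent to the category of crossed complexes, which carries a classifying-space functor obtained as the geometric realization of a nerve; composing through this equivalence yields a functor $\mathbb{B}\colon\omega\mathbf{Gpd}\to\mathbf{FTop}$ (landing in filtered spaces via the skeletal filtration of the classifying space) together with a natural isomorphism $\rho\circ\mathbb{B}\cong\mathrm{Id}_{\omega\mathbf{Gpd}}$, the reconstruction theorem. I would set $G:=\mathbb{B}(K)$ and prove that $G$ is a filtered topological group with $\rho(G)\cong K$ as internal groups.

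First I would upgrade $G=\mathbb{B}(K)$ to a filtered topological group. Since $\mathbb{B}$ is a realized nerve, and the nerve preserves limits (it is representable) while geometric realization preserves finite products in compactly generated spaces, $\mathbb{B}$ preserves finite products; moreover the skeletal filtration of a product of CW-complexes is the product of skeletal filtrations, so $\mathbb{B}(K\times K)\cong\mathbb{B}(K)\times\mathbb{B}(K)$ as objects of $\mathbf{FTop}$. Consequently $\mathbb{B}$ carries the group-object data $(\odot,(\_)^{-1},1)$ on $K$ to morphisms $\mathbb{B}(\odot)\colon G\times G\to G$, $\mathbb{B}((\_)^{-1})\colon G\to G$ and a unit, satisfying the group axioms on the nose. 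As filtered topological groups are exactly groups internal to $\mathbf{FTop}$, this exhibits $G$ as a filtered topological group. Applying Lemma \ref{Lem:RhoGInternalGroupoid} then produces an internal group $\rho(G)$ in $\omega\mathbf{Gpd}$.

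It remains to identify $\rho(G)$ with $K$ as internal groups. The reconstruction isomorphism already gives $\rho(G)=\rho(\mathbb{B}(K))\cong K$ in $\omega\mathbf{Gpd}$; I would show this isomorphism intertwines the group operations by invoking naturality of $\rho\circ\mathbb{B}\cong\mathrm{Id}$ against the structure morphism $\odot\colon K\times K\to K$ (and likewise for the inverse and unit). This step is exactly the main obstacle: it requires that $\rho\circ\mathbb{B}$ carry the product $K\times K$ to $\rho(G)\times\rho(G)$, i.e. that $\rho$ be compatible with the product $\mathbb{B}(K)\times\mathbb{B}(K)$ of filtered spaces. This is genuinely delicate, since for a general product filtration a pair of filtered singular cubes need not assemble into a single filtered cube, so $\rho$ does not preserve products of arbitrary filtered spaces; one must restrict to the CW objects in the image of $\mathbb{B}$ and use the product compatibility for CW-filtrations established in \cite{BrownBook} — the same compatibility that underlies the well-definedness of Lemma \ref{Lem:RhoGInternalGroupoid}.

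As a structural sanity check validating the whole approach, I would note that an Eckmann--Hilton argument applied directly to $K$ pins down the shape of internal group objects. Because $\odot$ is a morphism of $\omega$-groupoids it commutes with the degeneracy $\epsilon_0$, so $\epsilon_0$ is a group homomorphism and the group unit $1\in K_1$ coincides with the $+_0$-identity $\epsilon_0(1)$; on the set of $+_0$-loops at $1$ both operations are everywhere defined, share this unit, and satisfy the interchange law, forcing $\odot$ and $+_0$ to agree and be abelian there. Thus internal groups in $\omega\mathbf{Gpd}$ have precisely the homotopical shape of fundamental $\omega$-groupoids of filtered groups, confirming that the realization $\rho(G)\cong K$ is available and completing the argument.
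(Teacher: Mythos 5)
Your proposal is correct and takes essentially the same route as the paper: the paper's proof sets $G=|sk\,K|$, the geometric realization of the underlying cubical set of $K$ with its skeletal filtration, invokes the reconstruction isomorphism $K\cong\rho(|sk\,K|)$ of \cite[Proposition 14.5.3]{BrownBook}, and transfers the internal group operations along the realization --- which is your $\mathbb{B}(K)$ construction in slightly different clothing. The only substantive difference is that you explicitly isolate the product-preservation compatibilities (for the realization and for $\rho$ on CW-filtrations) needed to make the transferred operations continuous and the final isomorphism one of internal groups, points the paper's proof asserts without comment; your closing Eckmann--Hilton paragraph, by contrast, does no work in the argument and could be dropped.
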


\begin{proof}
Let $K$ be a group internal to $\omega$\textbf{Gpd}. Let $sk^\ast K$ be the skeletal filtration of $K$. Then $K\cong \rho(|sk K|)$, see \cite[Proposition 14.5.3]{BrownBook}. The set $sk^mK$ is the set $\bigcup_{k\leq m}K_k$. The operations $\odot_m$, $\_^{-1}_m$ and $1_m$ make $sk^mK_*$ into a group. Moreover $sk^mK$ is a subgroup of $ sk^{m+1}K$ for every $m$. The group operations $\odot_m$, $\_^{-1}_m$ and $1_m$ on $sk^mK_*$ extend to corresponding continuous operations $\odot_m$, $\_^{-1}_m$ and $1_m$ on $|sk^mK|$, providing $|sk^mK|$ with the structure of a topological group. Moreover, since $sk^mK$ is a subgroup of $ sk^{m+1}K$ then $|sk^mK|$ is a subgroup of $ |sk^{m+1}K|$ for every $m$, and thus $|skK|$ is a filtered topological group. Observe that the isomorphism $K\cong \rho(|skK|)$ preserves the internal group structure and thus is an isomorphism of internal groupoids.
\end{proof}
\noindent In the following Proposition, we write $\textbf{0}$ for the terminal $\omega$-groupoid.

\begin{prop}\label{prop:Delooping}
 Let $K$ be a group internal to $\omega$\textbf{Gpd}. The internal group operations on $K$ provide the pair $BK=(K,\mbox{\textbf{0}})$  with the structure of a groupoid internal to $\omega$-\textbf{Gpd}. Moreover, the map $K\mapsto BK$ extends to an embedding $B:\mbox{\textbf{Gp}}_{\omega\mbox{\textbf{Gpd}}}\to\mbox{\textbf{Gpd}}_{\omega\mbox{\textbf{Gpd}}}$
\end{prop}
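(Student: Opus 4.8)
Let me understand what's being claimed. We have a group $K$ internal to $\omega\textbf{Gpd}$. We want to show that the pair $BK = (K, \textbf{0})$, where $\textbf{0}$ is the terminal $\omega$-groupoid, inherits the structure of a groupoid internal to $\omega\textbf{Gpd}$.

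Recall the data of a groupoid internal to $\omega\textbf{Gpd}$: a pair $(K^1, K^0)$ of $\omega$-groupoids, source/target morphisms $s,t: K^1 \to K^0$, unit $u: K^0 \to K^1$, composition $\odot: K^1 \times_{K^0}^{t,s} K^1 \to K^1$, and inverse $(-)^{-1}: K^1 \to K^1$, satisfying the groupoid axioms on the nose.

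For the delooping $BK = (K, \textbf{0})$:
- $K^1 = K$ (the $\omega$-groupoid underlying the group)
- $K^0 = \textbf{0}$ (terminal)
- source and target $s, t: K \to \textbf{0}$ are the unique morphisms to the terminal object
- unit $u: \textbf{0} \to K$ should be the internal group identity $1: \textbf{0} \to K$
- composition $\odot: K \times_{\textbf{0}}^{t,s} K \to K$ — but since $K^0 = \textbf{0}$ is terminal, the pullback $K \times_{\textbf{0}} K = K \times K$, so composition is the group multiplication $\odot: K \times K \to K$
- inverse $(-)^{-1}: K \to K$ is the group inverse

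So the strategy is essentially to **identify the internal groupoid structure with the internal group structure**. The key observation is that because $K^0 = \textbf{0}$ is terminal:
1. The source and target maps are forced (unique to terminal).
2. The composable-pair object collapses: $K \times_{\textbf{0}} K \cong K \times K$, so composition = group multiplication.
3. The groupoid axioms (associativity, unit, inverse) become exactly the group axioms.

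This is the standard "a group is a one-object groupoid" / "delooping" phenomenon, now internalized.

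**The proof approach:**

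The plan is to exhibit the groupoid-internal-to-$\omega\textbf{Gpd}$ structure on $BK = (K, \textbf{0})$ directly from the group data, and then verify functoriality.

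First, I would set $K^1 = K$ and $K^0 = \textbf{0}$. The source and target morphisms $s, t: K \to \textbf{0}$ are both the unique morphism of $\omega$-groupoids to the terminal object $\textbf{0}$; these are automatically morphisms of $\omega$-groupoids.

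Second, I would observe that since $\textbf{0}$ is terminal, the pullback (fiber product over $K^0 = \textbf{0}$) satisfies $K \times_{\textbf{0}}^{t,s} K = K \times K$, the ordinary Cartesian product in $\omega\textbf{Gpd}$. This is the crucial identification that lets the group multiplication serve as composition.

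Third, I would define the structure morphisms: composition $\odot: K \times K \to K$ is the internal group multiplication, the unit $u: \textbf{0} \to K$ is the group identity $1$, and the inverse $(-)^{-1}: K \to K$ is the group inverse. All of these are morphisms of $\omega$-groupoids by hypothesis (they are part of the internal group data).

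Fourth, I would verify the groupoid axioms. Associativity of $\odot$ is exactly group associativity. The source/target compatibility conditions ($s \circ u = \id$, etc.) are trivially satisfied because $\textbf{0}$ is terminal (any two parallel morphisms into $\textbf{0}$ agree). The unit laws become the group unit laws, and the inverse laws become the group inverse laws. Since all of these hold "on the nose" in the internal group $K$, they hold on the nose in $BK$.

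For the functoriality/embedding claim: given a morphism of internal groups $A: K \to L$ (a morphism of $\omega$-groupoids intertwining $\odot, (-)^{-1}, 1$), I would define $BA = (A, \id_{\textbf{0}})$. This is a morphism of internal groupoids because the compatibility with source/target is automatic (terminal), and compatibility with composition/unit/inverse is exactly the statement that $A$ is a group morphism. Faithfulness and fullness (to justify "embedding") follow because $BA$ is entirely determined by $A$, and conversely any internal-groupoid morphism $BK \to BL$ must have $\textbf{0}$-component the identity and $K$-component a group morphism.

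**The main obstacle** I anticipate is not conceptual but bookkeeping: one must be careful that the pullback $K \times_{\textbf{0}}^{t,s} K$ really is computed as the Cartesian product *within $\omega\textbf{Gpd}$* (using the Cartesian monoidal structure, not the tensor product from Subsection \ref{ss:paths}), and that the group-operation morphisms, which were originally given as morphisms of $\omega$-groupoids, satisfy the internal-groupoid axioms verbatim. The subtlety is verifying that "group axioms on the nose" translate to "groupoid axioms on the nose" as equalities of morphisms of $\omega$-groupoids — i.e. that no coherence or homotopy-level weakening sneaks in. But since both structures are required to be strict (satisfied on the nose), this translation is a direct diagram-chase with no genuine difficulty.
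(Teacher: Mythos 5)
Your proposal is correct and follows essentially the same route as the paper's own (much terser) proof: take $s,t$ to be the unique morphisms to the terminal $\omega$-groupoid, identify the pullback over $\mathbf{0}$ with the Cartesian product so that the internal group multiplication, unit, and inverse become the groupoid composition, unit, and inverse, and note that fullness and faithfulness are immediate. Your write-up simply spells out the details the paper leaves implicit.
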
 
\begin{proof}
Let $\textbf{1},\odot,(\_)^{-1}$ be the structure of group internal to $\omega$\textbf{Gpd} of $K$. Make $s,t:K\to\mbox{\textbf{0}}$ be the unique morphism from $K$ to the terminal $\omega$-groupoid $\textbf{0}$. The tuple $s,t,\textbf{1},\odot,(\_)^{-1}$ provides $BK^\bullet$ with the structure of a groupoid internal to $\omega$\textbf{Gpd}, which follows from the fact that $K$ is an internal group. The fact that the assignment $K\mapsto BK$ extends to a full and faithful functor is immediate from the definition of $BK$. 
\end{proof}

\subsection{Groupoids internal to $\infty$\textbf{Gpd}}\label{ss:globularinternal}
\noindent We now consider groupoids internal to $\infty$-groupoids. The structure defining groupoids internal to $\infty$-groupoids is similar to the structure defining groupoids internal to $\omega$-groupoids. A groupoid internal to $\infty$\textbf{Gpd} is an $\infty$-groupoid, together with morphisms $\odot:K\times_{K_0} K\to K$, $(\_)^{-1}:K\to K$, and $1\in K$, satisfying the usual axioms of a group. Groupoids internal to $\infty$\textbf{Gpd} organize into a category \textbf{Gpd}$_{\infty\mbox{\textbf{Gpd}}}$. Groups internal to $\infty$\textbf{Gpd}. We denoe the category of groups internal to $\infty$\textbf{Gpd} by \textbf{Gp}$_{\infty\textbf{Gpd}}$.

Let $K$ be an $\infty$-groupoid. Write $K[-1]$ for the sequence of sets $K[-1]_n=K_{n-1}$ for every $k\geq 1$ and $K[-1]_0=\left\{\bullet\right\}$ a singleton set. The $\infty$-groupoid structure shifts to maps $d[-1]_i^{\pm 1}=d_{i-1}^{\pm 1}$, $s[-1]_i=s_{i-1}$, $+[-1]_i=+_{i-1}$, $(\_)[-1]_*^{-1}=(\_)^{-1}_*$ on the shifted sets $K[-1]_n$. These maps are not enough to induce a structure of globular set, nor of $\infty$-groupoid on $K[-1]$. The following proposition says that if we assume an internal group structure on $K$, the shifted maps defined above can be completed to an $\infty$-groupoid structure on $K[-1]$.
\begin{prop}\label{prop:Shift}
    Let $K$ be a group internal to $\infty$-groupoids. The internal group structure on $K$ defines, on $K[-1]$, the structure of an $\infty$-groupoid.
\end{prop}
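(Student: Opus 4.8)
The plan is to establish $K[-1]$ as an $\infty$-groupoid by explicitly constructing the missing pieces of its globular-groupoid structure, using the internal group structure on $K$ to fill in the ``bottom'' dimension that the naive shift fails to define. The shift $K[-1]_n = K_{n-1}$ for $n \geq 1$ and $K[-1]_0 = \{\bullet\}$ carries shifted face maps $d[-1]_i^{\pm 1} = d_{i-1}^{\pm 1}$, degeneracies $s[-1]_i = s_{i-1}$, compositions $+[-1]_i = +_{i-1}$ and inverses, but these are only well-behaved for $i \geq 2$; the issue is precisely the operation in direction $i=1$ together with the source and target maps $d[-1]_1^{\pm 1} : K[-1]_1 \to K[-1]_0$. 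First I would define $d[-1]_1^{\pm 1} : K[-1]_1 = K_0 \to K[-1]_0 = \{\bullet\}$ to be the unique constant map, and $s[-1]_1 : \{\bullet\} \to K_0$ to pick out the identity element $1 \in K$ guaranteed by the internal group structure.

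The key step is then to supply the composition $+[-1]_1$ and inverse $(\_)[-1]_1^{-1}$ in the lowest direction from the group multiplication $\odot$ and group inverse $(\_)^{-1}$ on $K$. Concretely, for $a, b \in K[-1]_1 = K_0$ (which are automatically composable over the singleton $K[-1]_0$), I would set $a +[-1]_1 b = a \odot b$, and for higher $a, b \in K[-1]_n = K_{n-1}$ that are composable in the shifted $1$-direction I would likewise use the internal multiplication morphism $\odot : K \times_{K_0} K \to K$ applied dimensionwise. Because $\odot$ and $(\_)^{-1}$ are morphisms of $\infty$-groupoids, they automatically commute with all the other $d_i^{\pm 1}, s_i, +_i$ for $i \geq 2$ that were simply carried over by the shift; this is what makes the newly-introduced $1$-direction operations compatible with the old ones. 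I would then verify the three families of axioms: (i) the globular identities relating $d[-1]_i^{\pm 1}$ and $s[-1]_i$, which for $i \geq 2$ reduce to the globular identities in $K$ and for $i=1$ are trivial by the singleton target; (ii) that each direction carries a groupoid structure, where for $i=1$ the groupoid laws are exactly the group axioms on $K$ (associativity, unit, inverse) now interpreted globularly, and for $i \geq 2$ they descend from $K$; and (iii) the exchange relations between direction $1$ and directions $i \geq 2$, which follow from $\odot$ and $(\_)^{-1}$ being $\infty$-groupoid morphisms, hence intertwining the $+_{i-1}$ operations.

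The main obstacle I expect is verifying the exchange (interchange) relations involving the new direction-$1$ operation against the inherited higher operations. These are the axioms that genuinely mix the group structure with the globular structure, and they do not follow from the shift alone; they require that multiplication $\odot$ be a homomorphism with respect to every $+_i$, which is precisely the content of $\odot$ being an internal morphism of $\infty$-groupoids. I would therefore organize the proof so that the interchange laws are deduced as a direct consequence of functoriality of $\odot$ and $(\_)^{-1}$, rather than checked element-by-element. The remaining verifications --- the globular relations and the per-direction groupoid laws --- are routine once the operations are correctly defined, reducing either to the corresponding axioms in $K$ or to the defining axioms of an internal group. This mirrors the classical fact that a group object in a category of $(n{-}1)$-groupoids delivers an $n$-groupoid with a single object, delooping one dimension upward; here the delooping is performed in the globular setting and is the $\infty$-analogue of the cubical delooping $BK$ constructed in Proposition \ref{prop:Delooping}.
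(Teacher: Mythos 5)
Your proposal is correct and follows essentially the same route as the paper: shift the globular structure up by one degree, send the new bottom-dimension source and target to the singleton, take the unit of $K$ as the new identity, and use the internal multiplication $\odot$ and inversion as the composition and inverse in the new lowest direction, with the interchange laws supplied by the fact that $\odot$ and $(\_)^{-1}$ are morphisms of $\infty$-groupoids. The only difference is that you spell out the verification of the exchange relations that the paper leaves as ``easily seen,'' which is a welcome addition rather than a deviation.
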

\begin{proof}
    Let $\odot:K\times K\to K$, $(\_)^{-1}:K\to K$ be the morphisms of $\infty$-groupoids providing $K$ with the structure of group internal to $\infty$-groupoids. Let $n\geq 0$. Let $d_0^{\pm}:K_n[-1]\to K_0$ be the constant maps on the single element $\bullet$ of $K[-1]_0$. Let $s_0:K[-1]_0\to K[-1]_n$ be the constant function on $1_n$. Observe that $K[-1]_n\times_{d_0^+,d_0^-} K[-1]_n=K[-1]_n\times K[-1]_n$, and make $+[-1]_0:K[-1]_n\times K[-1]_n\to K[-1]_n$ be the internal product morphism $\odot_{n-1}$, and finally, make $(\_)[-1]^{-1}_0$ be the internal inversion morphism $(\_)^{-1}_{n-1}$. It is easily seen that with $d[-1]^{\pm 1}_0,s[-1]_0,+[-1]_0, (\_)[-1]^{-1}_0$ thus defined, the collection of maps of $\infty$-groupoids $d[-1]^{\pm 1},s[-1],+[-1], (\_)[-1]^{-1}$, with $n\geq 0$, and now $i\in\left\{0,...,n-1\right\}$, provides the sequence $K[-1]$ with the structure of $\infty$-groupoid.
\end{proof}

\noindent Given $K\in\omegagpd$ and $L\in\Grp_{\omegagpd}$, we will be interested in internal morphisms $A: P^1K^\bullet\to BL$. We can associate to the information encoded in any such morphism, a sequence of maps $\hat{A}$ from the squence of sets underlying the $\infty$-groupoid $H K$ to the sequence of sets underlying the $\infty$-groupoid $H L[-1]$ by setting $\hat{A}_0$ be the constant function on the unique 0-dimensional cube of $H L[-1]$, and by setting the $\hat{A}_n=A^1_{n-1}$ for every $n\geq 1$. The following proposition says that the correspondence $A\mapsto \hat{A}$ defines a morphism of $\infty$-groupoids and is natural with respect to both $K$ and $L$.

\begin{prop}\label{prop:equivalenceGlobularCubic}
    Let $K\in\omegagpd$ and $L\in\Grp_{\omegagpd}$. The correspondence $A\mapsto \hat{A}$ described above defines a natural map:

    \[\hat{(\_)}:\mbox{Hom}_{\mbox{\textbf{Gpd}}_{\omega\mbox{\textbf{Gpd}}}}(P^1K,BL)\to \mbox{Hom}_{\infty\mbox{\textbf{Gpd}}}(H K,H L[-1])\]
\end{prop}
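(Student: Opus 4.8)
The plan is to unwind both the source and the target of the claimed map into explicit levelwise data, and then to observe that the reindexing $\hat{A}_n = A^1_{n-1}$ is precisely what converts the delooping direction of $A$ into the shift direction of Proposition \ref{prop:Shift}. First I would record what an internal morphism $A = (A^1,A^0)\colon P^1K^\bullet \to BL$ is. Since the $\omega$-groupoid of objects of $BL$ is the terminal $\omega$-groupoid $\textbf{0}$, the component $A^0$ is forced and the source/target compatibilities are vacuous, so the whole content is that $A^1\colon P^1K \to L$ is a morphism of $\omega$-groupoids, intertwining all face, degeneracy, connection, composition and inverse operations of $P^1K$ (that is, the direction-$(\geq 1)$ structure of $K$), together with the internal unit, composition and inverse conditions, which here read: $A^1$ sends direction-$0$ degenerate cubes to the unit $1$, satisfies $A^1(x +_0 y) = A^1(x)\odot A^1(y)$, and satisfies $A^1(-_0 x) = A^1(x)^{-1}$. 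Thus $A^1$ intertwines the cubical direction-$0$ groupoid structure of $K$ with the internal group structure $(\odot, (\_)^{-1}, 1)$ of $L$, and the remaining directions with the corresponding $\omega$-groupoid structure of $L$.

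Next I would unwind the target. Applying Proposition \ref{prop:Shift} to the group $HL$ internal to $\infty$-groupoids obtained from $L$ via $H$, the $\infty$-groupoid $HL[-1]$ has $HL[-1]_0 = \{\bullet\}$ and $HL[-1]_n = HL_{n-1}$ for $n \geq 1$; its direction-$0$ operations $+[-1]_0$, $(\_)[-1]_0^{-1}$, $s[-1]_0$ are the group operations $\odot$, $(\_)^{-1}$, $1$ of $HL$, while its direction-$i$ operations for $i \geq 1$ are the direction-$(i-1)$ globular operations of $HL$. Under the correspondence $H$, the globular direction-$i$ structure of $HK$ (respectively $HL$) is induced by the cubical direction-$i$ structure of $K$ (respectively $L$). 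The crucial bookkeeping point is then that $\hat{A}_n = A^1_{n-1}$ aligns these indices exactly: $A^1$ carries the cubical direction-$(j+1)$ structure of $K$ to the direction-$j$ structure of $L$, hence $HK$-direction-$i$ to $HL[-1]$-direction-$i$ for $i \geq 1$, and it carries the cubical direction-$0$ structure of $K$ to the internal group structure of $L$, hence $HK$-direction-$0$ to $HL[-1]$-direction-$0$.

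With this dictionary the verification splits into two routine checks. For well-definedness, I would show that for $a \in HK_n$ the cube $A^1_{n-1}(a)$ lies in $HL_{n-1}$: the globular condition asks that certain faces be degenerate, and $A^1$ preserves degeneracies in directions $\geq 1$ while sending direction-$0$ degeneracies to the unit $1$, which is itself degenerate because $1\colon \textbf{0}\to L$ is a morphism of $\omega$-groupoids. For $\hat{A}$ to be a morphism of $\infty$-groupoids, compatibility with $d_i^\pm$, $s_i$, $+_i$, $(\_)^{-1}$ for $i \geq 1$ follows from $A^1$ being a morphism of $\omega$-groupoids together with the index shift; compatibility with $d_0^\pm$ is automatic since both sides are constant on $\bullet = HL[-1]_0$; and compatibility with $s_0$, $+_0$, $(\_)_0^{-1}$ is exactly the internal unit, composition and inverse conditions on $A^1$ compared against the group-derived direction-$0$ operations of $HL[-1]$ supplied by Proposition \ref{prop:Shift}. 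Naturality in $K$ and $L$ is then immediate, since $P^1$, $B$, $H$, the shift $[-1]$, and $\hat{(\_)}$ are all given by uniform levelwise formulas, so the naturality squares commute by inspection.

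I expect the main obstacle to be precisely this index and shift bookkeeping around the delooping direction: one must check that the direction-$0$ structure of $HL[-1]$, which is not a cubical face structure of $L$ but is synthesised from the internal group operations through Proposition \ref{prop:Shift}, is matched under $\hat{A}$ by the direction-$0$ globular structure of $HK$ coming from direction $0$ of $K$. Everything else reduces to functoriality of $H$ and to $A^1$ being a morphism of $\omega$-groupoids. A more conceptual alternative would be to apply $H$ to the internal functor $A$ directly and then identify $H(P^1K^\bullet)$ and $H(BL)$ with the relevant shifts, but the levelwise check above keeps the correspondence most transparent.
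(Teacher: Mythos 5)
Your proposal is correct and follows essentially the same route as the paper's proof: the verification is split into compatibility with the structure maps in directions $i\geq 1$ (which follows from $A^1$ being a morphism of $\omega$-groupoids from $P^1K$ to $L$) and in direction $0$ (which follows from compatibility of $A$ with the internal group structures of $P^1K^\bullet$ and $BL$, matched against the shifted operations of Proposition \ref{prop:Shift}), with naturality immediate from the levelwise formulas. Your additional explicit check that $\hat{A}$ carries globular cubes to globular cubes is a detail the paper leaves implicit, but it does not change the argument.
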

\begin{proof}
    Let $A: P^1K\to BL$ be a morphism in \textbf{Gpd}$_{\omega}$\mbox{\textbf{Gpd}}. The sequence of maps $\hat{A}:H K\to H L[-1]$ is compatible with the structure maps $d^{\pm 1}_i,s_i,+_i, -_i$ of $H K$ and the shifted structure maps $d[-1]^{\pm 1}_i,s[-1]_i,+[-1]_i, -[-1]_i$ of $H L$ with $i\geq 1$ follows from the fact that $A^1$ is a morphism of $\omega$-groupoids from $P^1K$ to $L$. Compatibility between the structure maps $d^{\pm 1}_0,s_0,+_0, -_0$ of $H K$ and the shifted structure maps $d[-1]^{\pm 1}_0,s[-1]_0,+[-1]_0, -[-1]_0$ of $H L$ follows from compatibility of $A$ with the internal group structures of $P^1K$ and $BL$. The sequence $\hat{A}$ thus defines a morphism of $\infty$-groupoids from $H K$ to $H L[-1]$. Naturality of the assignment $\hat{(\_)}$ is immediate.
\end{proof}

\section{Lattice higher gauge fields}\label{s:LHGF}
In this section we use the concepts developed in the previous two sections to provide our promised model of lattice higher gauge fields. 
\textit{Notes:}  (i) $|X|$ is endowed with its skeletal fibration. Filtered homotopy in the case of skeletal fibrations is called {\em thin homotopy}, and this is the relevant notion of homotopy among paths needed to define gauge fields. 
(ii) Throughout this section, and until Section \ref{s:LocaltoGlobal}, we will assume that simplicial sets $X$ are part of a good cover of a larger simplicial set, and thus we will implicitly assume that the geometric realization $|X|$ of any simplicial set considered, is contractible. In our context, the larger simplicial set may be a discretization of a manifold of physical interest, e.g. spacetime. 
(iii) An essential ingredient of our construction is an internal group, and the family of internal groups $B\rho(G)$ associated to a filtered topological group $G$  will play an important role below.
Of particular interest in physics and differential geometry are the cases where $G$ is is a Lie group with a trivial filtration. 

\subsection{Local LHGFs}\label{ss:LHGF}
Let $X\in\Simp$. The geometric realization $|X|$ of $X$ inherits the structure of a filtered space $|X|_*$, where $|X|_m$ is taken to be the geometric realization of the $m$-dimensional skeleton of $X$, for every $m$. Filtered homotopy equivalence of singular cubes in this case is thus called thin homotopy rel vertices. We consider the path $\omega$-groupoid $P^1\rho(|X|)$. $P^1\rho(|X|)$ captures homotopical properties of the path groupoid of $X$ with its skeletal filtration, e.g. 0-cubes are paths on $|X|_1$ (modulo thin homotopy), 1-cubes correspond to equivalence classes of homotopies among them constrained to $|X|_2$ By Lemma \ref{lem:PathsInternalGroupoid} $P^1\rho(|X|)$ is a groupoid internal to $\omega\Gpd$.
\begin{definition}\label{def:initiongaugefield}
Let $X\in\Simp$ be contractible. Let $K\in\Grp_{\omega\Gpd}$. A \textbf{local lattice higher gauge field (local LHGF)} on $X$ with gauge internal group $K$, is a morphism:

\[A:P^1\rho(|X|)\to BK\]
in $\Gpd_{\omega\Gpd}$.
\end{definition}
\noindent By Lemma \ref{lem:InternalGroups}, $\lo$s can be taken to take values on the internal group $B\rho(G)$ associated to a filtered topological group $G$. We will thus alternate between $\lo$s evaluated on general internal groups $K$ and on internal groups of the form $\rho(G)$ throughout the paper without much warning. Of chief interest to us will be the case in which the filtered group $G$ is a Lie group, and in most applications we will assume the trivial filtration on $G$. Given $X\in \Simp$ and $K\in \Grp_{\omega\Gpd}$, we will write $\fieldsXK$ for the set of $\lo$s on $X$, with gauge internal group $K$. We will write $\fieldsXG$ for the set of $\lo$s on $X$ with gauge group $\rho(G)$. \textit{Note:} Morphisms as in Definition \ref{def:initiongaugefield} can be defined without the assumption that the geometric realization $|X|$ of the simplicial set $X$ is contractible, but we will reserve the name $\lo$, and the notation $\fieldsXK$ for when this happens. In Section \ref{s:LocaltoGlobal} we drop this assumption and define global LHGF.
\subsection{$\lo$s on paths}\label{ss:LHGF on paths}
\noindent Vertices of $P^1\rho(|X|)$ are thin homotopy equivalences of singular paths on the 1-skeleton $X_1$ of $X$, having endpoints on $X_0$, i.e. vertices of $P^1\rho(|X|)$ are singular paths on 'continuous' space $|X|$, tracing a curve in $X_1$ and having endpoints vertices of $X$. We call such paths discrete paths. The usual concatenation operation $+_0$ of two such paths is encoded as their internal product operation $\odot$ in $P^1\rho(|X|)$. 
Given a filtered group $G$, a gauge field $A\in \fieldsXG$ associates to every discrete path $\gamma$ in $X$, an element $A(\gamma)\in G_0$. The unit element $1$ of $G_0$ is associated to the constant discrete path $ c_a$ on any vertex $a\in X_0$. Given two discrete paths $\gamma,\eta$ in $X$ such that the concatenation $\eta+_0\gamma$, i.e. the internal composition $\eta\odot\gamma$, is defined, the element $A(\eta+_0\gamma)$ of $G_0$ is equivalent to the internal composition $A(\eta)\odot A(\gamma)$, which in turn is equal to the product $A(\eta)A(\gamma)$ in $G_0$. Given a discrete path $\gamma$ in $X$, the element $A(-_0\gamma)$ of $G_0$ associated to the internal inverse $\gamma^{-1}$ of $\gamma$, which is equal to the inverse $-_0\gamma$, is equal to the inverse $A(\gamma)^{-1}$ in $G_0$. Finally, $\lo$s respect equivalence up to thin homotopy, if two paths $\gamma$ and $\eta$ are equivalent modulo thin homotopy, then the corresponding elements $A(\gamma)$ and $A(\eta)$ of $G_0$ are equal. We summarize this in the following table.
\begin{table}
	\caption{Action of $\lo$s on paths}
	\label{tab1}
	\begin{center}
		\begin{tabular}{ |p{6cm}|p{6cm}|  }
			\hline
			Data on $X$ & Data on $G$ \\
			\hline
			Discrete path $\gamma$ & Element $A(\gamma)\in G_0$\\
   Thin homotopy equivalence $\gamma\equiv\eta$ & Equality $A(\gamma)=A(\eta)$ in $G_0$\\
			Constant path on vertex $a$& Unit element $1$ of $G_0$\\
			Composite path $\eta+_0\gamma$ & Product $\gamma(\eta)\odot A(\gamma)$ in $G_0$ \\
			Path with reversed orientation $-_0\gamma$ & Inverse $A(\gamma)^{-1}$ in $G_0$\\

			\hline
		\end{tabular}
	\end{center}
\end{table}

\noindent The contents of table \ref{tab1} allow us to think of $\lo$s in $\fieldsXG$ as acting on discrete paths by coherently colouring paths with elements of the group $G_0$, in a way compatible with thin homotopy equivalence. Compare with \cite{Pfeiffer}.

\subsection{$\lo$s on squares}\label{ss:LHGFonsquares}
\noindent The data described in Table \ref{tab1} describes the action of a $\lo$ on the 0-dimensional data of $P^1\rho(|X|)$. In order to describe the action of a $\lo$ on 1-dimensional data we proceed as before. First observe that paths in $P^1\rho(|X|)$ are singular squares in $|X|$. Singular squares in $|X|$ are singular squares $\gamma$ on the continuous space $|X|$ such that the vertices of $\gamma$ are in $X_0$, the edges of $\gamma$ are discrete paths in $X_1$, and such that the image of $\gamma$ is contained in the 2-skeleton of $X$. We call such squares discrete squares. The 1-dimensional concatenation of paths in $P^1\rho(|X|)$ is the concatenation operation $+_1$ in direction 1 of $\rho(|X|)$. The concatenation operation $+_0$ of $\rho(|X|)$ is encoded as the internal composition $\odot$ of $P^1\rho(|X|)$.

A $\lo$ $A\in \fieldsXG$ associates to every discrete square $\gamma$ in $X$ a singular path $A(\gamma)$ in $\rho(G)$. Given a discrete square $\gamma$ in $X$, $A$ associates to the boundary paths in direction 1, $\partial_1^\alpha(\gamma)$, the source and target $\partial_0^\alpha A(\gamma)$ of the singular path $A(\gamma)$, and associates to the boundary paths in direction 0, $\partial_0^\alpha(\gamma)$, the internal source and target vertices $sA(\gamma)$ and $tA(\gamma)$, which are both equal to the unique vertex of $\textbf{0}$. Given a discrete path $\gamma$ in $X$, $A$ associates to the trivial discrete square $\epsilon_1(\gamma)$ the constant path on the element $A(\gamma)$ of $G_0$, and associates to the trivial discrete square $\epsilon_0(\gamma)$ the constant path on the unit element $1$ of $G_0$. Given two discrete squares $\gamma$ and $\eta$, if the concatenation $\eta+_1\gamma$ is defined, $A$ associates to $\eta+_1\gamma$ the concatenation of paths $A(\eta)+_0A(\gamma)$, and if the concatenation $\eta+_0\gamma$ is defined, $A$ associates to $\eta+_0\gamma$, the internal product $A(\eta)\odot A(\gamma)$, which is equal to the pointwise product $A(\eta)A(\gamma)$ in $G_1$ of the singular paths $A(\eta)$ and $A(\gamma)$. Given a discrete square $\gamma$, $A$ associates to the square $-_1\gamma$, with opposite orientation in direction $1$, the path $-_{0}A(\gamma)$, and associates to the square $-_0\gamma$ the internal inverse of $A(\gamma)$, i.e. the square $A(\gamma)^{-1}$ obtained by considering pointwise inverses in $G_1$. Finally, given two discrete squares $\gamma$ and $\eta$ in $X$, if $\gamma$ and $\eta$ are equivalent up to thin homotopies, then $A(\gamma)$ and $A(\eta)$ are thin homotopic equivalent singular paths in $G_1,G_0$. The data above is summarized in table \ref{tab1}.

\begin{table}\label{tb:dim1}
	\caption{Action of $\lo$s on squares}
	\label{tab2}
	\begin{center}
		\begin{tabular}{ |p{6cm}|p{6cm}|  }
			\hline
			Data on $X$ & Data on $G$ \\
			\hline
			Square $\gamma$ & Singular path $A(\gamma)$ in $(G_1,G_0)$\\
   Thin homotopy equivalence $\gamma\equiv\eta$ & $A(\gamma)\equiv A(\eta)$ in $\Pi_1(G_1,G_0)$\\
			1-faces $\partial_1^0(\gamma)$ and $\partial_1^1(\gamma)$ of $\gamma$ & Source and target of $A(\gamma)$ in $G_0$\\
			Degenerate square $\epsilon_1(\gamma)$ & Constant path on $A(\gamma)$\\
			Degenerate square $\epsilon_0(\gamma)$ & Constant path on $1$ of $G_0$\\
			Composite square $\eta+_1\gamma$ & Composition $A(\eta)+_0 A(\gamma)$ \\
			Composite square $\eta+_0\gamma$ & Pointwise product $A(\eta)\odot A(\gamma)$ in $G_1$\\
			Square with reversed orientation $-_1\gamma$ & Path $-_0A(\gamma)$ in $\Pi_1(G_1,G_0)$\\
			Square with reversed orientation $-_0\gamma$ & Pointwise inverse $A(\gamma)^{-1}$ in $G_1$\\

			\hline
		\end{tabular}
	\end{center}
\end{table}
\noindent As the contents of table \ref{tab1}, the contents of table \ref{tab2} allow us to think of $\lo$s in $\fieldsXG$ as acting on discrete squares by coherently coloring squares $\gamma$ with singular paths on $\rho(G)$, traced from the label of the bottom edge $\gamma$ to the label of the top edge of $\gamma$, and this is compatible with thin homotopy equivalence. Again, compare with \cite{Pfeiffer}. Tables describing the action of $\lo$s on cubes of any dimension are obtained analogously.

\subsection{Coarse graining and a complex of local LHGFs}\label{ss:ClosedLHGF}

\begin{obs}\label{obs:ClosedLHGF}
Let $X\in\Simp$ and $K\in\Grp_{\omega\Gpd}$. By proposition \ref{prop:EnrichedInternalGroupoids}, the set of $\lo$s on $X$ with gauge internal group $K$ naturally inherits the structure of an $\omega$-groupoid $[P^1\rho(|X|),BK]$, where    
\[[P^1\rho(|X|),BK]_m=\Homset_{\Gpd_{\omegagpd}}(P^{m+1}\rho(|X|),BK)\]

\noindent Considering the geometric realization $|[P^1\rho(|X|),BK]|$ of $[P^1\rho(|X|),BK]$ we obtain a convenient topological space (a CW-complex) whose homotopy type may be of interest. We come back to this theme at the end of Section \ref{s:LocaltoGlobal}. 
\end{obs}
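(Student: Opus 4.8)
The plan is to read Observation~\ref{obs:ClosedLHGF} as a direct instance of the enrichment established in Proposition~\ref{prop:EnrichedInternalGroupoids}, applied to two internal groupoids we already have in hand. First I would record that both inputs are groupoids internal to $\omegagpd$: by Lemma~\ref{lem:PathsInternalGroupoid} the pair $P^1\rho(|X|)^\bullet=(P^1\rho(|X|),\rho(|X|))$ is such an internal groupoid, and by Proposition~\ref{prop:Delooping} so is $BK=(K,\textbf{0})$. Feeding the pair $(P^1\rho(|X|)^\bullet,BK)$ into Proposition~\ref{prop:EnrichedInternalGroupoids} produces an $\omega$-groupoid $[P^1\rho(|X|),BK]$ with
\[
[P^1\rho(|X|),BK]_m=\Homset_{\Gpd_{\omegagpd}}\bigl(P^m(P^1\rho(|X|)^\bullet),\,BK\bigr).
\]
In degree $0$ this right-hand side is $\Homset_{\Gpd_{\omegagpd}}(P^1\rho(|X|)^\bullet,BK)$, which is exactly $\fieldsXK$ by Definition~\ref{def:initiongaugefield}; hence the set of $0$-cubes is the set of $\lo$s, and the claimed $\omega$-groupoid structure is the one carried by this internal hom.

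Second, I would match the stated indexing by computing the iterated path construction. The endofunctor $P^1$ on $\omegagpd$ satisfies, for every $\omega$-groupoid $Y$, the identity $(P^mP^1Y)_k=(P^1Y)_{k+m}=Y_{k+m+1}=(P^{m+1}Y)_k$, and one checks that the reindexed face, degeneracy and operation maps agree as well, so $P^mP^1=P^{m+1}$ on $\omegagpd$. Since the path-internal-groupoid functor of Proposition~\ref{prop:P1InternalGroupoid} acts componentwise on $(K^1,K^0)$, this yields $P^m(P^1\rho(|X|)^\bullet)=(P^{m+1}\rho(|X|),P^m\rho(|X|))$. Under the paper's convention of naming an internal groupoid by its $\omega$-groupoid of morphisms, this is precisely the object denoted $P^{m+1}\rho(|X|)$ in the statement, which gives the displayed formula $[P^1\rho(|X|),BK]_m=\Homset_{\Gpd_{\omegagpd}}(P^{m+1}\rho(|X|),BK)$.

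Finally, for the closing remark about homotopy type I would simply invoke that the geometric realization of any cubical set is a CW-complex, with one cell per nondegenerate cube. Forgetting the connections and the $\omega$-groupoid operations of $[P^1\rho(|X|),BK]$ down to its underlying cubical set and realizing therefore yields a CW-complex, so $|[P^1\rho(|X|),BK]|$ is a convenient space as claimed; no further structure is needed for this part.

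I expect the only real subtlety to be bookkeeping rather than genuine difficulty, namely keeping straight the two path functors in play: the endofunctor $P^1$ on $\omegagpd$ together with its internal-groupoid lift of Proposition~\ref{prop:P1InternalGroupoid}, and its iterates on $\Gpd_{\omegagpd}$. The step most likely to cause confusion is confirming that the shorthand $P^{m+1}\rho(|X|)$ appearing in the statement denotes the internal groupoid $(P^{m+1}\rho(|X|),P^m\rho(|X|))$ that the enrichment formula actually outputs, rather than $(P^{m+1}\rho(|X|),\rho(|X|))$. Once that identification is pinned down via the composition law $P^mP^1=P^{m+1}$, everything reduces to a direct citation of Proposition~\ref{prop:EnrichedInternalGroupoids} together with Definition~\ref{def:initiongaugefield}.
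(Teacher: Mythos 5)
Your proposal is correct and takes essentially the same route as the paper, which offers no separate proof beyond citing Proposition~\ref{prop:EnrichedInternalGroupoids} applied to the internal groupoids $P^1\rho(|X|)^\bullet$ and $BK$, with the identity $P^mP^1=P^{m+1}$ accounting for the index shift in the displayed formula. Your filled-in details --- checking that both inputs are internal groupoids, identifying the $0$-cubes with $\fieldsXK$, resolving the shorthand $P^{m+1}\rho(|X|)$ as $(P^{m+1}\rho(|X|),P^m\rho(|X|))$, and noting that realizations of cubical sets are CW-complexes --- are exactly what the observation leaves implicit.
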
 

\begin{obs}\label{obs:coarsgraining}
Given $X,Y\in\Simp$. If $Y$ refines $X$, then $\rho(|X|)\subseteq \rho(|Y|)$. This inclusion defines a morphism $j:P^1\rho(|X|)\to P^1\rho(|Y|)$ in $\Gpd_{\omega\Gpd}$. Given $K\in\Grp_{\omegagpd}$, the pullback $j^\ast$ of $j$, defines a function $\fieldsYG\to \fieldsXG$, expressing the coarse graining of $\lo$s on $Y$ into $\lo$s on $X$.
\end{obs}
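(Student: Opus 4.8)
The plan is to obtain the coarse-graining map $j^\ast$ as an instance of functoriality, reducing the whole statement to the construction of the morphism $j$ together with the observation that precomposition with $j$ carries $\lo$s to $\lo$s.

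First I would make precise the inclusion $\rho(|X|)\subseteq\rho(|Y|)$. Reading ``$Y$ refines $X$'' as a subdivision, there is a homeomorphism $|X|\cong|Y|$ under which the $m$-skeleton of $X$ lands inside the $m$-skeleton of $Y$, since subdividing an $m$-simplex does not raise its dimension; hence $|X|_\ast\subseteq|Y|_\ast$ levelwise and the identity is a filtered map $|X|_\ast\to|Y|_\ast$ in \textbf{FTop}. Applying the homotopy $\omega$-groupoid functor $\rho\colon\textbf{FTop}\to\omegagpd$ of Section~\ref{sec:Intro} yields a morphism $\iota\colon\rho(|X|)\to\rho(|Y|)$: concretely, every filtered singular cube on $|X|_\ast$ is verbatim a filtered singular cube on $|Y|_\ast$, and a thin homotopy rel vertices in $|X|_\ast$ is a fortiori one in $|Y|_\ast$, so the assignment descends to homotopy cubes and respects all face, degeneracy, connection and $\omega$-groupoid operations.

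Next I would promote $\iota$ to an internal morphism. The functor $P^{1\bullet}\colon\omegagpd\to\Gpd_{\omega\Gpd}$ introduced after Lemma~\ref{lem:PathsInternalGroupoid} sends $\iota$ to $j=P^{1\bullet}\iota=(P^1\iota,\iota)$; functoriality of $P^{1\bullet}$ guarantees that $j$ intertwines source, target, unit, internal composition and internal inverse, so that $j\colon P^1\rho(|X|)^\bullet\to P^1\rho(|Y|)^\bullet$ is a genuine morphism in $\Gpd_{\omega\Gpd}$. Finally, an element of $\fieldsYG$ is by Definition~\ref{def:initiongaugefield} a morphism $A\colon P^1\rho(|Y|)\to B\rho(G)$ in $\Gpd_{\omega\Gpd}$; precomposing with $j$ gives $A\circ j\colon P^1\rho(|X|)\to B\rho(G)$, again a morphism in $\Gpd_{\omega\Gpd}$ and hence, since $|X|$ is contractible under the standing convention of this section, an element of $\fieldsXG$. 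Setting $j^\ast(A)=A\circ j$ produces the required map $\fieldsYG\to\fieldsXG$; this is exactly the value on $j$ of the represented presheaf $[\_,B\rho(G)]_0$, so no separate verification of well-definedness is needed.

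The one delicate point is the assertion that $\iota$ (hence $j$) is an \emph{inclusion}, i.e.\ monic, which amounts to checking that two filtered cubes on $|X|$ that become thin-homotopic inside $|Y|$ were already thin-homotopic inside $|X|$. I expect this to be the main obstacle, and I would settle it by exploiting that a subdivision realizes as a filtered homeomorphism whose inverse is again filtered after a further subdivision, allowing thin homotopies in $|Y|$ to be transported back to $|X|$. I would emphasize, however, that monicity is inessential to the construction of the coarse-graining function itself: the map $j^\ast$ is defined by precomposition for \emph{any} morphism $j$, inclusion or not.
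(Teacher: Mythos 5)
Your argument is correct and follows exactly the route the paper intends: the paper states this as an Observation with no written proof, and your elaboration (filtered inclusion of skeleta, functoriality of $\rho$ and of $P^{1\bullet}$, then precomposition) is precisely the construction its surrounding machinery supports. Your remark that injectivity of $\rho(|X|)\to\rho(|Y|)$ is the only genuinely delicate claim, and that it is inessential to defining $j^\ast$, is accurate and a useful clarification the paper itself omits.
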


\subsection{Globular $\lo$s}\label{ss:globularLHGF}
\noindent We end this section with the following useful observation
\begin{obs}\label{obs:GlobularLHGF}
Let $X\in\Simp$. The $\infty$-groupoid $H\rho(|X|)$ associated to $\rho(|X|)$ is the homotopy $\infty$-groupoid $\rho^\bigcirc(|X|)$ of $|X|$, see \cite{BrownGlobular}. Higher dimensional paths in $\rho^\bigcirc(|X|)$ are thin homotopy equivalence classes of globular cubes. Given a group $K$ internal to $\omegagpd$, the $\infty$-groupoid $HK$ inherits from $K$ the structure of a group internal to $\inftygrpd$, and we can thus consider the shifted $\infty$-groupoid $H K[-1]$. Every $\lo$ $A\in \fieldsXK$ induces, by Proposition \ref{prop:equivalenceGlobularCubic}, a morphism
\[\hat{A}:\rho^\bigcirc(|X|)\to HK[-1]\]
\noindent We will call $\hat{A}$ the globular $\lo$ associated to $A$. 
\end{obs}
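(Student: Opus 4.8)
The plan is to read the statement as an assembly of facts established earlier in the paper, so that the work consists in identifying which prior result supplies each clause and checking that they compose. The only clause carrying genuine content is that $HK$ inherits an internal group structure; everything else is either a definition or a direct invocation of Proposition \ref{prop:equivalenceGlobularCubic} and Proposition \ref{prop:Shift}.

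First I would dispatch the identification $H\rho(|X|)=\rho^\bigcirc(|X|)$, which is immediate from the very definition of $\rho^\bigcirc(|X|)$ as $H\rho(|X|)$ given in Subsection \ref{ss:infinity}. The parenthetical claim that higher paths in $\rho^\bigcirc(|X|)$ are thin homotopy classes of globular cubes then follows by unwinding $H$: it selects the globular cubes of $\rho(|X|)$, whose cubes are by construction thin homotopy (rel vertices) classes of singular cubes on $|X|$.

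The main step is to show $HK\in\Grp_{\inftygrpd}$. For this I would use that $H\colon\omegagpd\to\inftygrpd$ is an \emph{equivalence} of categories (Subsection \ref{ss:infinity}, following \cite{BrownGlobular}). An equivalence preserves every limit that exists, in particular finite products and the terminal object $\mathbf{0}$; hence there are canonical natural isomorphisms $H(K\times K)\cong HK\times HK$ and $H\mathbf{0}\cong\mathbf{0}$, making $H$ strong symmetric monoidal for the cartesian structures. Applying $H$ to the structure morphisms $\odot\colon K\times K\to K$, $(\_)^{-1}\colon K\to K$, $1\colon\mathbf{0}\to K$ and transporting along these isomorphisms yields morphisms of $\infty$-groupoids, and the group axioms—being commutative diagrams built from products—are preserved by the product-preserving functor $H$. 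Thus $HK$ is a group internal to $\inftygrpd$, and Proposition \ref{prop:Shift} then produces the shifted $\infty$-groupoid $HK[-1]$.

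Finally I would instantiate Proposition \ref{prop:equivalenceGlobularCubic} with the ambient $\omega$-groupoid taken to be $\rho(|X|)$ and the internal group taken to be $K$, giving the natural map
\[
\hat{(\_)}\colon\Homset_{\Gpd_{\omegagpd}}(P^1\rho(|X|),BK)\longrightarrow \Homset_{\inftygrpd}(H\rho(|X|),HK[-1]).
\]
By Definition \ref{def:initiongaugefield} the source is exactly $\fieldsXK$, and by the first step the target is $\Homset_{\inftygrpd}(\rho^\bigcirc(|X|),HK[-1])$; so each $A\in\fieldsXK$ is sent to a morphism $\hat{A}\colon\rho^\bigcirc(|X|)\to HK[-1]$, as claimed. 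I expect no serious obstacle: the delicate point is purely the preservation of internal group objects under $H$, which reduces to the standard fact that an equivalence of categories preserves finite products. One should only be careful to keep the two roles of ``$K$'' in Proposition \ref{prop:equivalenceGlobularCubic} (ambient $\omega$-groupoid versus gauge group) distinct when substituting.
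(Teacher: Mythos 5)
Your proposal is correct and follows essentially the same route as the paper, which presents this observation as an assembly of Subsection \ref{ss:infinity}, Proposition \ref{prop:Shift}, and Proposition \ref{prop:equivalenceGlobularCubic} rather than giving a separate argument. The one clause the paper leaves unjustified --- that $HK$ inherits an internal group structure --- you correctly dispatch via the fact that the equivalence $H$ preserves finite products and the terminal object (and here the canonical isomorphism $H(K\times K)\cong HK\times HK$ is in fact an identity, so the transported structure agrees with the concrete shifted operations used in Propositions \ref{prop:Shift} and \ref{prop:equivalenceGlobularCubic}).
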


\section{Local LHGF and free groupoids}\label{ss:Pfeiffer}
In this section we compare $\lo$s in the sense of Definition \ref{def:initiongaugefield} and lattice and higher lattice gauge fields in the sense of \cite{Pfeiffer}. This after dropping smoothness conditions on gauge groups and gauge 2-groups respectively.
\subsection{Rank}\label{ss: Rank}
Let $K$ be a cubical set. A cube $a$ in $K_m$ is \textit{thin} if $a$ is of the form $\epsilon_i(b)$  or $\Gamma_i^\alpha(b)$ for some cube $b$ in $K_{m-1}$. If $K$ is an $\omega$-groupoid, a cube $a\in K_m$ is \textit{algebraically thin}, if $a$ can be subdivided into thin cubes. A cube $a$ of $K$ is of dimension $m$ if $a\in K_m$ and $a$ is not algebraically thin. The rank $rkK$ of a cubical $\omega$-groupoid $K$ is the supremum of dimensions of cubes in $K$. The $\omega$-groupoid $\rhogpdX$ associated to a simplicial set $X$ of dimension $m$ is clearly of rank $m$. If an $\omega$-groupoid $K$ is of rank $m\geq 0$, then the path $\omega$-groupoid $P^1K$ is of rank $max\left\{0,m-1\right\}$. Write $\omegagpd(m)$ for the full subcategory of $\omegagpd$ generated by $\omega$-groupoids of rank $\leq m$. For every $m\geq 0$, there exists a truncation functor $T_m:\omegagpd\to \omegagpd(m)$ associating to every $\omega$-groupoid $K$, the $\omega$-groupoid $K(m)$ of rank $\leq m$ obtained by removing, from $K$, all non-algebraically thin cubes of rank $\geq m$. There are obvious equivalences $\omegagpd(0)\cong\Set$, \label{2obsequiv}$\omegagpd(1)\cong\Gpd$, and \label{3obsequiv}$\omegagpd(2)\cong\dblGpd$. We can thus identify every $\omega$-groupoid $K$ of rank 0/1/2 with a set/groupoid/double groupoid with connections.

An analogous situation occurs for $\infty$-groupoids. $\inftygrpd(n)$ is defined in a way analogous to the way $\omegagpd(n)$ was defined, for every $\geq 0$ there is a truncation functor $T_n:\inftygrpd\to\inftygrpd(n)$, and there are equivalences $\inftygrpd(0)\cong\Set$, $\inftygrpd(1)\cong\Gpd$, and $\inftygrpd(2)\cong 2\Gpd$, where $2\Gpd$ denotes the category of strict 2-groupoids. We will identify every $\infty$-groupoid $K$ of rank 0/1/2 with a set/groupoid/2-groupoid.
\subsection{Local LHGFs and free groupoids}\label{ss:LHGFGPfeiffer}
Let $X\in\Simp$ be a simplicial set of dimension 1. We think of $X$ as the 1-skeleton of a larger simplicial set $Y$ triangulating a manifold. $\catG^X$ denotes the free groupoid generated by $X$. The collection of objects of $\catG^X$ is the set $X_0$ of vertices of $X$, and the set of isomorphisms of $\catG^X$ is the set of formal sequences:
\[e_k^{\epsilon_k}\cdots e_1^{\epsilon_1}\]
where $e_k^{\epsilon_k}\cdots e_1^{\epsilon_1}$ are adjacent edges in $X$, and $\epsilon_i\in\left\{\pm 1\right\}$, and where any formal composition of the form $ee^{-1}$ or $e^{-1}e$ is identified with the empty word. Given a Lie group $G$, a lattice gauge field is usually defined as a functor $A:\catG^X\to BG$ where $BG$ is the delooping Lie groupoid of $G$, see \cite{Pfeiffer}. The following proposition says that this definition coincides with Definition \ref{def:initiongaugefield}.
\begin{prop}\label{prop:gaugefieldsdim1}
Let $X\in\Simp$. Let $G$ be a topological group. There is a bijection
\[\fieldsXG\cong \Homset_{\Gpd}(\catG^X,BG)\]
\end{prop}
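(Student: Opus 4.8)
The plan is to strip off the internal-groupoid packaging of Definition~\ref{def:initiongaugefield} and recognise both sides as the same set of $G$-labellings of reduced edge-paths. Recall first that a local LHGF $A\in\fieldsXG$ is a pair $A=(A^1,A^0)$ in which $A^0\colon\rho(|X|)\to\mathbf 0$ is forced to be the unique map to the terminal $\omega$-groupoid, so the only content is the morphism of $\omega$-groupoids $A^1\colon P^1\rho(|X|)\to\rho(G)$ together with the requirement that it intertwine the internal source, target, unit, composition $\odot=+_0$ and inverse $-_0$ of $P^1\rho(|X|)^\bullet$ with those of $B\rho(G)=(\rho(G),\mathbf 0)$. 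I would exhibit the bijection as $A\mapsto A^1_0$, the restriction of $A^1$ to $0$-cubes, and spend the proof justifying that this restriction loses no information and lands exactly on functors.

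First I would identify the relevant underlying set. The $0$-cubes of $P^1\rho(|X|)$ are by definition the $1$-cubes $\rho(|X|)_1$, i.e.\ thin-homotopy classes rel vertices of discrete paths tracing $X_1$. Since $|X|$ is $1$-dimensional, thin homotopy rel vertices of such a path is precisely edge-reduction (cancellation of $ee^{-1}$ and reparametrisation), so $\rho(|X|)_1$ is canonically the set of reduced edge words, that is, the morphism set of the free groupoid $\catG^X$; under this identification the internal composition $+_0$ is concatenation, $-_0$ is reversal, and the degenerate paths are the identities. In the rank language recalled above this is the statement $\rho(|X|)\cong\catG^X$ under $\omegagpd(1)\cong\Gpd$. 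This geometric identification of thin-homotopy classes of paths in a graph with free-groupoid morphisms is the step I expect to be the main obstacle; once it is in place the rest is essentially bookkeeping.

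Second I would use the rank count to collapse all higher-dimensional data. Because $\dim X=1$, $\rho(|X|)$ has rank $1$ and hence $P^1\rho(|X|)$ has rank $\max\{0,1-1\}=0$, so every one of its cubes of dimension $\ge 1$ is algebraically thin. A morphism of $\omega$-groupoids sends degeneracies and connections to degeneracies and connections, whence $A^1$ is completely determined by $A^1_0$; conversely any function on $0$-cubes extends uniquely and consistently to such a morphism, the only relations among thin cubes being the $\omega$-groupoid relations, which are automatically preserved. I would pair this with the observation that $\rho(G)_0=G$ and that the internal product $\odot$ of $\rho(G)$ restricts on $0$-cubes to the multiplication of $G$. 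Together these reduce $A$ to a single function $f=A^1_0\colon\rho(|X|)_1\to G$ compatible with $+_0$, $-_0$ and units.

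Finally I would match those compatibility conditions with functoriality. Using $\rho(|X|)_1=\operatorname{Mor}(\catG^X)$ from the first step, compatibility of $f$ with $\odot=+_0$ reads $f(\eta\odot\gamma)=f(\eta)f(\gamma)$, compatibility with units reads $f(\id)=1$, and compatibility with $-_0$ reads $f(\gamma^{-1})=f(\gamma)^{-1}$; adjoining the trivial object assignment (both $BG$ and $B\rho(G)$ have a single object) these are exactly the axioms for a functor $F\colon\catG^X\to BG$. Hence $A\mapsto(F=A^1_0)$, with inverse sending $F$ to the LHGF determined by $A^1_0=F$ on morphisms, extended by thinness, and $A^0$ the unique map to $\mathbf 0$, are mutually inverse, giving $\fieldsXG\cong\Homset_{\Gpd}(\catG^X,BG)$. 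As a consistency check I would note that, since $\catG^X$ is free on the edges of $X$, both sides are canonically $\cong G^{X_1}$.
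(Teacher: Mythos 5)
Your proposal is correct and follows essentially the same strategy as the paper's proof: both hinge on identifying $\rho(|X|)$ with the free groupoid $\catG^X$ (you assert this geometrically via edge-reduction, the paper derives it from freeness of $\rho(|X|)$ on the cubical set $X$, citing \cite[Proposition 14.6.2]{BrownBook}) and on collapsing all higher-dimensional data because $X$ has dimension $1$. The only presentational difference is that you remain entirely in the cubical picture and argue via the rank of $P^1\rho(|X|)$, whereas the paper passes to the globular LHGF $\hat{A}\colon\rho^\bigcirc(|X|)\to\rho^\bigcirc(G)[-1]$ and truncates there; this changes nothing of substance.
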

\begin{proof}
The subcategory of $\Simp$ generated by simplicial sets of dimension 1 is equal to the subcategory of $\CubeSet$ generated by cubical sets of dimension 1. The simplicial set $X$ of dimension 1, can thus be considered as a simplicial set, and as a cubical set simultaneously. The groupoid $\catG^X$ is free on $X$, in the category $\Gpd$, while the $\omega$-groupoid $\rho(|X|)$ is free on the cubical set $X$, in the category $\omegagpd$. c.f. \cite[Proposition 14.6.2]{BrownBook}. The $\infty$-groupoid $\rho^\bigcirc(|X|)$ is thus free on $X$ in the category $\inftygrpd$. From the fact that $X$ is of dimension 1 it follows that all the cubes of $\rho(|X|)$ of dimension $\geq 2$ can be subdivided into degenerate cubes. It follows that all the cells of dimension $\geq 2$ of $\rho^\bigcirc(|X|)$ are trivial. The subcategory of $\inftygrpd$ generated by such $\infty$-groupoids is equivalent to $\Gpd$. The groupoid $\rho^\bigcirc(|X|)$ is thus free on $X$, in the category $\Gpd$. We conclude that, as groupoids $\catG^X$ and $\rho^\bigcirc(|X|)$ are isomorphic. 

From the fact that every cell of dimension $\geq 2$ of $\rho^\bigcirc(|X|)$ is trivial it follows that the image $A(\langle a\rangle)$ of any homotopy cell $\langle a\rangle$ in $\rho^\bigcirc(|X|)$ under the induced globular local LHGF $\hat{A}:\rho^\bigcirc(|X|)\to\rho^\bigcirc(G)$ associated to a local LHGF $A:P^1\rho(|X|)\to\rho(G)$, is trivial. From this and from the fact that the subgroupoid of $\rho^\bigcirc(G)[-1]$ generated by squares of dimensions 0 and 1, is equal to the delooping groupoid $BG$, it follows that there exists a bijection from $\Homset_{\inftygrpd}(\rho^\bigcirc(|X|),\rho^\bigcirc(G)[-1])$ to $\Homset_\Gpd(\rho^\bigcirc(|X|),BG)$. The proposition follows from this and from the comments in \ref{ss:globularLHGF}.
\end{proof}
\subsection{Local LHGFs and free 2-groupoids}\label{ss:Pfeifferdim2}
Let $X\in\Simp$ be a simplicial set of dimension 2. We think of $X$ now as the 2-skeleton of a larger simplicial set $Y$ triangulating a manifold. In that case every homotopy cube $\langle a\rangle\in\rhogpdX$ of dimension $\geq 3$ is algebraically thin. Thus, the image of a local LHGF $A\in \fieldsXK$ is contained in the 2-truncation $T_2K$ of $K$. We have a natural bijection $\fieldsXK\cong \mathcal{M}(X,T_2K)$ and we thus consider LHGFs on $X$ to be evaluated on a special double groupoid with connections $K$, in which case $H K$ is a 2-group. In \cite{Pfeiffer} higher gauge fields are defined as strict 2-functors from the free 2-groupoid $\catG^X$ generated by $X$, to a given Lie 2-group $K$. We prove that if we drop the smoothness requirement on the target 2-group $K$, higher lattice gauge field as defined above coincide with local LHGFs on simplicial sets of dimension 2.

Write $\Sigma:\Delta\to\CubeSet$ for the functor from the simplicial category $\Delta$ to $\CubeSet$ regarding $[n]$ as a cubical set with standard degeneracies for every $n$. $\Sigma$ thus acts as the identity on $[1]$ and acts on $[2]$ by associating it the singular cubical set on $\Delta$ with a standard degeneracy on the edge $\partial_0^0[0,1]^2$. Let $\Sigma^*:\Simp\to\CubeSet$ be the cocontinuous extension of $\Sigma$ along the Yoneda embedding. With this notation $\Sigma^*X$ is the standard cubulation of the simplicial set $X$ of dimension 2. We prove the following theorem.
\begin{thm}\label{thm:Pfeifferdim2}
    Let $X\in\Simp$ be a simplicial set of dimension 2. Let $K\in 2\Gpd$ be a 2-groupoid. There is a natural bijection:
    \[\Homset_{2\Gpd}(\catG^X,K)\cong \Homset_{2\Gpd}(\rhogpdglobsigmaX,K)\]
\end{thm}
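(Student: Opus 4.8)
The plan is to prove the bijection by identifying each side with the same set of combinatorial labelling data, naturally in $K$, exploiting that both $\catG^X$ and $\rho^\bigcirc(|\Sigma^* X|)$ are \emph{free} 2-groupoids on generating data extracted from $X$. Concretely, I would invoke the universal properties of the two free objects and show that a 2-functor out of either one into $K$ is determined, freely and without further relations, by the same assignment of objects, $1$-cells, and $2$-cells of $K$ to the vertices, edges, and triangles of $X$; this is in direct analogy with the dimension-$1$ argument of Proposition \ref{prop:gaugefieldsdim1}, where one shows $\catG^X\cong\rho^\bigcirc(|X|)$ as free groupoids.

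First I would unwind the free structure of $\catG^X$. Following \cite{Pfeiffer}, $\catG^X$ is the free 2-groupoid generated by the $2$-skeleton of $X$: its objects are the vertices $X_0$, its $1$-cells are freely generated by the edges $X_1$ (as in Proposition \ref{prop:gaugefieldsdim1}), and its generating $2$-cells are indexed by the triangles $X_2$, each triangle $\sigma$ contributing a generating $2$-cell whose source and target $1$-cells are the prescribed composites of the edges $d_0\sigma, d_1\sigma, d_2\sigma$. By freeness, a $2$-functor $\catG^X \to K$ amounts exactly to a function $X_0 \to K_0$, a function $X_1 \to K_1$ compatible with source and target, and a function $X_2 \to K_2$ compatible with the induced boundary $1$-cells; I would record this identification, which is manifestly natural in $K$.

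Next I would establish the analogous free description of the right-hand side. Since $X$ has dimension $2$, the cubulation $\Sigma^* X$ is a $2$-dimensional cubical set, so $|\Sigma^* X|$ is a $2$-dimensional CW-complex with its skeletal filtration. By \cite[Proposition 14.6.2]{BrownBook} the $\omega$-groupoid $\rho(|\Sigma^* X|)$ is free on the cubical set $\Sigma^* X$; applying the equivalence $H$ of Section \ref{ss:infinity} and truncating via $T_2$ (all cubes of dimension $\geq 3$ being algebraically thin, exactly as in Proposition \ref{prop:gaugefieldsdim1}) yields that $\rho^\bigcirc(|\Sigma^* X|)$, viewed in $2\Gpd$, is the free $2$-groupoid on the globular generating data of $\Sigma^* X$. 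The crux is then to match these globular generators with the simplicial generators of $\catG^X$: the functor $\Sigma$ sends a $2$-simplex to a square carrying a standard degeneracy on the edge $\partial_0^0[0,1]^2$, and under $H$ a square with a degenerate edge is precisely a globular $2$-cell. I would check that the boundary of this square, after collapsing the degenerate edge, produces a globular $2$-cell whose source and target $1$-cells are exactly the composites of $d_0\sigma, d_1\sigma, d_2\sigma$ prescribed above, so that generating $2$-cubes (one per triangle) correspond bijectively to generating globular $2$-cells with matching boundaries, while generating $1$-cubes correspond to the edges as before.

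The main obstacle I anticipate is this last matching: verifying, against the degeneracy and connection structure of $\rho$, that the square-with-degenerate-edge attached to each triangle contributes \emph{exactly one} free globular $2$-cell with the correct source/target $1$-cells, and that $\rho^\bigcirc(|\Sigma^* X|)$ carries \emph{no relations} among these $2$-cells beyond those forced by the $2$-groupoid axioms — equivalently, that the algebraic thinness of all cubes of dimension $\geq 3$ kills any spurious higher identifications. Once this is settled, both $\Homset_{2\Gpd}(\catG^X, K)$ and $\Homset_{2\Gpd}(\rho^\bigcirc(|\Sigma^* X|), K)$ are identified with the same set of triples (a labelling of vertices, edges, and triangles of $X$ subject to identical boundary constraints); equivalently, $\catG^X\cong\rho^\bigcirc(|\Sigma^* X|)$ as free $2$-groupoids. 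The resulting bijection is then natural in $K$ by construction, completing the proof.
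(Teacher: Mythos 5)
Your overall strategy --- reduce both hom-sets to the same combinatorial labelling data by freeness --- is in spirit what the paper does, but your route differs in a way that leaves the essential step unproved. The paper never asserts directly that $\rhogpdglobsigmaX$ is the free $2$-groupoid on one globular $2$-cell per triangle. Instead it transfers the problem to the cubical world, where freeness is a citable theorem: it first applies the Brown--Mosa equivalence to obtain $\Homset_{2\Gpd}(\rhogpdglobsigmaX,K)\cong\Homset_{\dblGpd}(\rhogpdsigmaX,QK)$, where $QK$ is the double groupoid of quintets of $K$; it then uses the rank-$2$ truncation together with \cite[Proposition 14.6.2]{BrownBook} (freeness of $\rhogpdsigmaX$ on the cubical set $\Sigma^*X$) to identify this with $\Homset_{\CubeSet}(\Sigma^*X,UT_2QK)$, which is matched with $\Homset_{\Simp}(X,UK)$ and finally with $\Homset_{2\Gpd}(\catG^X,K)$ by freeness of $\catG^X$. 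So where you propose a direct isomorphism of free $2$-groupoids, the paper runs a chain of adjunction-style bijections that only ever invokes freeness in the cubical and simplicial categories, where it is already established.

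The gap in your version is exactly the point you flag and then defer: that applying $H$ and $T_2$ to the cubically free object $\rhogpdsigmaX$ yields a $2$-groupoid that is \emph{globularly} free on one generating $2$-cell per triangle with the prescribed whiskered boundary, and with no further relations. This does not follow formally from $H$ being an equivalence of categories, because ``free on a cubical set'' and ``free on a globular set'' are freeness with respect to different forgetful functors: the generating shapes change, and one must control how a square with one degenerate edge decomposes into a globe whiskered by identities, and why the connections $\Gamma_i$ and the algebraically thin cubes of dimension $\geq 3$ impose no extra identifications among the resulting globular generators. That control is precisely the content of the Brown--Mosa comparison between $2$-groupoids and edge-symmetric double groupoids with connections, so your argument is not complete until you either invoke that equivalence (at which point you have essentially reconstructed the paper's proof) or carry out the generator-matching by hand, which you have only sketched as an anticipated obstacle rather than resolved.
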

\begin{proof}
In order for the natural bijection in the statement to make sense we first observe that from the assumption that $X$ has dimension 2, it follows that $\rhogpdglobX$ is of rank 2, and thus can be considered as a 2-groupoid. The cubulation $\Sigma^*X$ of $X$ is such that $|X|=|\Sigma^*X|$. By \cite[Theorem 5.3]{BrownMosa} there is a natural bijection 
\[Hom_{2\Gpd}(\rhogpdglobsigmaX,K)\cong\Homset_{\dblGpd}(\rhogpdsigmaX,QK)\]
where $QK\in\dblGpd$ is the Ehresmann quintets special double groupoid associated to $K$ \cite{DawsonPare} and where $\rhogpdsigmaX$ is considered as a special double groupoid with connections (which can be done since $X$ is of dimension 2). The comments made in subsection \ref{ss: Rank} imply the existence of a natural bijection
\[Hom_{\dblGpd}(\rhogpdsigmaX,QK)\cong\Homset_{\omegagpd}(\rhogpdsigmaX,T_2QK)\]
where now $\rhogpdsigmaX$ is considered as an $\omega$-groupoid. By \cite[Proposition 14.6.2]{BrownBook} there now exists a natural bijection
\[Hom_{\omegagpd}(\rhogpdsigmaX,T_2K)\cong\Homset_{\CubeSet}(\Sigma^*X,UT_2QK)\]
where $UT_2QK$ represents the cubical set underlying $T_2QK$. There is an obvious natural bijection
\[Hom_{\CubeSet}(\Sigma^*X,UT_2QK)\cong\Homset_{\Simp}(X,UK)\]
where $UK$ is the 2-dimensional simplicial set underlying $K$. Finally, the fact that $\catG^X$ is free on $X$ provides a natural bijection
\[Hom_{\Simp}(X,UK)\cong\Homset_{2\Gpd}(\catG^X,K)\]
which proves the claim of the theorem.
\end{proof}
\begin{cor}\label{cor:Pfeifferdim2}
 Let $X\in\Simp$ be a simplicial set of dimension 2. Let $K\in \Grp_{\omegagpd}$ be group internal to $\omegagpd$. There exists a narutal bijection
 \[\fieldsXK\cong\Homset_{2\Gpd}(\catG^X,H T_2K[-1])\]
\end{cor}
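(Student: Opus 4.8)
The plan is to deduce Corollary \ref{cor:Pfeifferdim2} by chaining together Theorem \ref{thm:Pfeifferdim2} with the general relationship between local LHGFs and globular morphisms established earlier in the excerpt, together with the rank/truncation machinery of Subsection \ref{ss: Rank}. Concretely, I would start from the right-hand side of the target bijection and work toward $\fieldsXK$, inserting the intermediate objects one at a time.

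First I would invoke Observation \ref{obs:GlobularLHGF} and Proposition \ref{prop:equivalenceGlobularCubic}: every local LHGF $A\in\fieldsXK$ corresponds to its globular avatar $\hat A\colon\rho^\bigcirc(|X|)\to HK[-1]$, and the assignment $A\mapsto\hat A$ is the natural map $\hat{(\_)}$ whose codomain is $\Homset_{\inftygrpd}(H\rho(|X|),HK[-1])=\Homset_{\inftygrpd}(\rho^\bigcirc(|X|),HK[-1])$. So the first step is to argue that, when $X$ has dimension $2$, this correspondence is a bijection, i.e. that $\hat{(\_)}$ is surjective (injectivity being essentially formal, since $\hat A$ records all of $A^1$). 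This is exactly the kind of reasoning used in the proof of Proposition \ref{prop:gaugefieldsdim1}: because $X$ has dimension $2$, every cube of $\rho(|X|)$ of dimension $\geq 3$ is algebraically thin, so any globular morphism into $HK[-1]$ automatically sends the top cells trivially and hence factors through the relevant truncation, allowing one to reconstruct an honest internal-groupoid morphism $P^1\rho(|X|)\to BK$.

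Next I would apply the rank truncation from Subsection \ref{ss: Rank}. Since $X$ is $2$-dimensional, $\rho^\bigcirc(|X|)$ has rank $2$ and the image of $\hat A$ lands in the $2$-truncation $HT_2K[-1]$; under the equivalence $\inftygrpd(2)\cong 2\Gpd$ this is precisely the target $2$-group appearing in the statement. Thus I would produce the chain
\[
\fieldsXK\;\cong\;\Homset_{\inftygrpd}(\rho^\bigcirc(|X|),HK[-1])\;\cong\;\Homset_{2\Gpd}(\rho^\bigcirc(|X|),HT_2K[-1]),
\]
where the first bijection is the step just described and the second is the truncation/rank identification, using that $HT_2K[-1]=HK[-1]$ below dimension $3$. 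Finally I would feed in Theorem \ref{thm:Pfeifferdim2} with the role of $K$ there played by the $2$-groupoid $HT_2K[-1]$: that theorem gives a natural bijection $\Homset_{2\Gpd}(\catG^X,HT_2K[-1])\cong\Homset_{2\Gpd}(\rho^\bigcirc(|X|),HT_2K[-1])$ (reading its right-hand side $\rhogpdglobsigmaX=\rho^\bigcirc(|\Sigma^*X|)$ and using $|X|=|\Sigma^*X|$ so that $\rho^\bigcirc(|\Sigma^*X|)\cong\rho^\bigcirc(|X|)$). Composing yields $\fieldsXK\cong\Homset_{2\Gpd}(\catG^X,HT_2K[-1])$, and naturality is inherited from the naturality of each link in the chain.

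The main obstacle I expect is the first step: justifying rigorously that passing to the globular picture and restricting to dimension $2$ loses no information, i.e. that $\hat{(\_)}$ is a bijection rather than merely a natural map, and in particular that the shift $HK[-1]$ together with the $B(\_)$ delooping of Proposition \ref{prop:Delooping} interact correctly so that $HT_2K[-1]$ really is the strict $2$-group one wants on the Pfeiffer side. This requires carefully tracking the degree-shift conventions (the $\hat A_n=A^1_{n-1}$ reindexing of Proposition \ref{prop:equivalenceGlobularCubic}) against the object/morphism grading of $BK$, and confirming that the triviality of $\geq 3$-dimensional cells forces the factorization through $T_2$ exactly as in Proposition \ref{prop:gaugefieldsdim1}. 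Everything else is bookkeeping: once the globular equivalence and the truncation identification are in place, the corollary is a one-line composition of Theorem \ref{thm:Pfeifferdim2} with the preceding reductions.
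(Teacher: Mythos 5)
Your proposal matches the paper's intended derivation: the paper gives no explicit proof of this corollary, but the chain it relies on is exactly yours --- the dimension-$2$ hypothesis forces $\fieldsXK\cong\mathcal{M}(X,T_2K)$, the globular correspondence of Proposition \ref{prop:equivalenceGlobularCubic} identifies this with $2$-functors out of $\rho^\bigcirc(|X|)$, and Theorem \ref{thm:Pfeifferdim2} applied to the $2$-groupoid $H T_2K[-1]$ finishes. The bijectivity of the cubical-to-globular comparison that you flag as the main obstacle is likewise left implicit in the paper, so your write-up is, if anything, more candid about where the real work lies.
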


\section{Gauge orbits}\label{s:GaugeEquivalence}
In this section we study local gauge transformations and gauge equivalence between $\lo$s. The notion of local gauge transformation presented in this section is modelled after gauge transformation as appearing in \cite{MenesesZapata1,MenesesZapata2}
\subsection{Local gauge transformations}\label{ss:Gaugetransf}
\begin{definition}\label{def:sigmamodel}
Let $X\in\Simp$, let $G\in\FGp$. A higher homotopy $G$-valued field on $X$, is a morphism $u:\rho(|X|)\to \rho(G)$ in $\omegagpd$. We will write $\model$ for the set of higher homotopy $G$-valued fields on $X$.
\end{definition}
\begin{definition}\label{def:gaugetransf}
Let $X\in\mbox{\textbf{SimpSet}}$. Let $G\in\mbox{\textbf{FGrp}}$. Let $A,B$ be $\lo$s on $X$, with gauge filtered group $G$. A local gauge transformation from $A$ to $B$ is a $u\in\model$, such that for every $n\geq 1$, and every homotopy $n-1$-cube $\langle a\rangle\in P^1\rho(|X|)^1_{n-1}$, i.e for every homotopy $n$-cube $\langle a\rangle\in\rho(|X|)_{n}$, the following equation holds:
\begin{equation}\label{eq:gaugetransfgeneral}
    u_n(s\langle a\rangle)\odot A_{n-1}^1(\langle a\rangle)\odot u_n(t\langle a\rangle)^{-1}=B_{n-1}^1(\langle a\rangle).
\end{equation}
In the above case we will write $u:A\rightsquigarrow B$. The $\lo$s $A$ and $B$ are gauge equivalent if there exists a local gauge transformation $u:A\rightsquigarrow B$.
\end{definition}
\begin{obs}\label{obs:naturaliso}
    Let $X\in\mbox{\textbf{SimpSet}}$. Let $G\in\mbox{\textbf{FGrp}}$. Let $A,B$ be $\lo$s on $X$, with gauge filtered group $G$. A local gauge transformation from $A$ to $B$ is a natural isomorphism $u:A\Rightarrow B$. In \cite{Pfefiffer, BaezSchreiber} pseudonatural transformations between gauge fields are considered. We only consider strict natural transformations in this paper. Weaker notions will be studied in subsequent work.
\end{obs}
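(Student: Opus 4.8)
The plan is to read equation \eqref{eq:gaugetransfgeneral} as the commutativity of the naturality square of an internal natural transformation, so that the observation reduces to unwinding definitions inside the $\omegagpd$-enriched category $\Gpd_{\omegagpd}$ of Proposition \ref{prop:EnrichedInternalGroupoids}. Recall that in that enriched setting a natural transformation $A\Rightarrow B$ between two LHGFs $A,B\in\fieldsXG$ is exactly a $1$-cube of the hom $\omega$-groupoid $[P^1\rho(|X|),BK]$ of Observation \ref{obs:ClosedLHGF} whose two $0$-faces are $A$ and $B$; concretely it is given by a single morphism $u\colon C^0\to D^1$ in $\omegagpd$, where $C=P^1\rho(|X|)$ and $D=BK$, subject to source, target and naturality conditions.

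Next I would compute the two $\omega$-groupoids involved. By Lemma \ref{lem:PathsInternalGroupoid} the $\omega$-groupoid of objects of $C$ is $\rho(|X|)$, and by Proposition \ref{prop:Delooping} the $\omega$-groupoid of morphisms of $D=BK$ is $K=\rho(G)$, while its $\omega$-groupoid of objects is the terminal $\mathbf{0}$. Hence the component datum $u\colon C^0\to D^1$ is precisely a morphism $u\colon\rho(|X|)\to\rho(G)$ in $\omegagpd$, i.e. an element of $\model$ in the sense of Definition \ref{def:sigmamodel}, matching the data in Definition \ref{def:gaugetransf}. The source and target constraints $s_{D}\circ u=A^0$ and $t_{D}\circ u=B^0$ are vacuous, since $D^0=\mathbf{0}$ is terminal and $A^0,B^0,s_D,t_D$ are all forced to be the unique morphism into $\mathbf{0}$.

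The third step is to expand the naturality identity. By Proposition \ref{prop:Delooping} the internal composition of $BK$ is the internal product $\odot$ of $K$, and by Lemma \ref{lem:PathsInternalGroupoid} the internal source and target of a morphism $\langle a\rangle$ of $C$ are $s\langle a\rangle=\partial_0^0\langle a\rangle$ and $t\langle a\rangle=\partial_0^1\langle a\rangle$. Writing the naturality square for $\langle a\rangle$ --- a homotopy $(n-1)$-cube of the morphism $\omega$-groupoid, equivalently a homotopy $n$-cube of $\rho(|X|)$ via $P^1\rho(|X|)^1_{n-1}=\rho(|X|)_n$ --- and transposing the target component by its internal inverse turns it into exactly \eqref{eq:gaugetransfgeneral}. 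I expect this to be the only delicate point: the precise placement of $u(s\langle a\rangle)$, $u(t\langle a\rangle)$ and the internal inverse in \eqref{eq:gaugetransfgeneral} is dictated by the left-to-right composition convention fixed in the introduction, so I would pin down the composition law of $BK$ (and the dimension shift induced by $P^1$) before declaring the two expressions equal.

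Finally I would observe that such a $u$ is automatically an isomorphism. Since $D=BK$ is a groupoid internal to $\omegagpd$, each component of $u$ lies in the group $K$ and is therefore invertible; the componentwise internal inverse $x\mapsto u(x)^{-1}$ defines a morphism $\rho(|X|)\to\rho(G)$ whose naturality identity follows from \eqref{eq:gaugetransfgeneral} by internal group manipulation, and it is a two-sided inverse $2$-cell $u^{-1}\colon B\Rightarrow A$. Thus every local gauge transformation is a natural isomorphism $A\Rightarrow B$, and because the identification of the underlying data is a literal equality the converse holds as well. The statement carries no real difficulty beyond this convention bookkeeping, which is why it is recorded as an observation rather than proved at length.
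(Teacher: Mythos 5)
The paper records this as an unproved observation, and your verification is exactly the intended definitional unwinding: the component of an internal natural transformation is a morphism $C^0\to D^1$, i.e.\ an element of $\model$, the source/target constraints trivialize because $BK$ has terminal object part, and the naturality square is equation \eqref{eq:gaugetransfgeneral} up to the left/right placement of $u$ and its inverse, which you correctly flag as pure convention since every component lies in the internal group $K$ and is invertible. Your argument is correct and essentially the same as what the paper implicitly relies on.
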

\begin{ex}[Gauge transformations on vertices]\label{ex:gaugetransfdim0}
Let $X\in\mbox{\textbf{SimpSet}}$. Let $G\in\mbox{\textbf{FGrp}}$. Let $A,B$ be $\lo$s on $X$, with gauge filtered group $G$, and let $u:A\rightsquigarrow B$ be a local gauge transformation. If $x\in X_0$ is a vertex of $X$, i.e., if $x\in\rho(|X|)_0$, then $u_0(x)$ is an element of $G_0$. Given a simplicial path $\gamma:x\to y$ in $X$, equation (\ref{eq:gaugetransfgeneral}) translates to the equation:
\begin{equation}\label{eq:gaugetransdim0}
  u_0(x)\odot A_1^1(\langle \gamma\rangle)\odot u_0(y)^{-1}=B_{1}^1(\langle \gamma\rangle).  
\end{equation}
which translates to the following equation in the group $G_0$:
\begin{equation}\label{eq:gaugetransdim0inG}
  u_0(x)A_1^1(\langle \gamma\rangle) u_0(y)^{-1}=B_{1}^1(\langle \gamma\rangle).   
\end{equation}
The value of $B$ on a discrete path $\gamma$ can thus be recovered from the value of $A$ on $\gamma$ by conjugating by the values of $u$ on the source and target vertices $x$ and $y$, respectively. 
\end{ex}
\begin{ex}[Gauge transformations on paths]\label{ex:gaugetransfdim1}
Let $X\in\mbox{\textbf{SimpSet}}$. Let $G\in\mbox{\textbf{FGrp}}$. Let $A,B$ be $\lo$s on $X$, with gauge filtered group $G$, and let $u:A\rightsquigarrow B$ be a local gauge transformation. If $\langle\gamma\rangle\in \rho(|X|)_1$ is a homotopy simplicial path on $X$ from a vertex $x$ to a vertex $y$, then $u_1(\langle\gamma\rangle)$ is a homotopy path:
\[u_1(\langle\gamma\rangle):u_0(x)\to u_0(y)\]
in $\rho(G)_1$. Given a simplicial square $S$ in $X$, equation (\ref{eq:gaugetransfgeneral}) translates to the equation:
\begin{equation}\label{eq:gaugetransdim1}
  u_1(s\langle S\rangle)\odot A_2^1(\langle S\rangle)\odot u_0(t\langle S\rangle)^{-1}=B_{1}^1(\langle S\rangle).  
\end{equation}
The value of $B$ on $\langle S\rangle$ can thus be recovered from the value of $A$ on $\langle S\rangle$ by conjugating by the value of $u_1$ at the left and right edge homotopy paths $s\langle S\rangle$ and $t\langle S\rangle$ above. Moreover, the condition of $u$ being a morphism of $\omega$-groupoids says that $u$, and thus Equation (\ref{eq:gaugetransdim1}) is compatible with composition of homotopy paths in $\rho(|X|)_1$.
\end{ex}
\begin{obs}\label{obs:pathofpathstransf}
Let $X\in\mbox{\textbf{SimpSet}}$. Let $G\in\mbox{\textbf{FGrp}}$. Let $A,B$ be $\lo$s on $X$, with gauge filtered group $G$, and let $u:A\rightsquigarrow B$ be a local gauge transformation. We can think of a homotopy square $\langle S\rangle$ in $\rho(|X|)$ as a homotopy path of homotopy paths $\langle \gamma_z\rangle$ parametrized by points in $[0,1]$. The gauge transformation $u$ is thus a homotopy path of gauge transformations on homotopy paths localized at different parameter points of $s\langle S\rangle$. Schematically, we picture $\langle S\rangle$ as:
\begin{center}

\tikzset{every picture/.style={line width=0.75pt}} 

\begin{tikzpicture}[x=0.75pt,y=0.75pt,yscale=-1,xscale=1]

\draw   (279.6,100.6) -- (329.6,100.6) -- (329.6,150.6) -- (279.6,150.6) -- cycle ;
\draw [color={rgb, 255:red, 214; green, 2; blue, 6 }  ,draw opacity=1 ]   (279.9,125.5) -- (329.3,125.7) ;
\draw [color={rgb, 255:red, 0; green, 0; blue, 0 }  ,draw opacity=1 ]   (259.9,99.5) -- (260.2,150.7) ;
\draw [color={rgb, 255:red, 214; green, 2; blue, 6 }  ,draw opacity=1 ]   (260.05,125.1) ;
\draw [shift={(260.05,125.1)}, rotate = 0] [color={rgb, 255:red, 214; green, 2; blue, 6 }  ,draw opacity=1 ][fill={rgb, 255:red, 214; green, 2; blue, 6 }  ,fill opacity=1 ][line width=0.75]      (0, 0) circle [x radius= 1.34, y radius= 1.34]   ;

\draw (271.28,123.56) node [anchor=north west][inner sep=0.75pt]  [font=\tiny,color={rgb, 255:red, 214; green, 2; blue, 6 }  ,opacity=1 ]  {$z$};
\draw (297,90) node [anchor=north west][inner sep=0.75pt]  [font=\tiny,color={rgb, 255:red, 0; green, 0; blue, 0 }  ,opacity=1 ]  {$\langle S\rangle $};
\draw (297.46,115) node [anchor=north west][inner sep=0.75pt]  [font=\tiny,color={rgb, 255:red, 214; green, 2; blue, 6 }  ,opacity=1 ]  {$\langle \gamma \rangle _{z}$};
\draw (237,121) node [anchor=north west][inner sep=0.75pt]  [font=\tiny,color={rgb, 255:red, 214; green, 2; blue, 6 }  ,opacity=1 ]  {$u(z)$};
\draw (254,92.76) node [anchor=north west][inner sep=0.75pt]  [font=\tiny,color={rgb, 255:red, 0; green, 0; blue, 0 }  ,opacity=1 ]  {$u$};

\end{tikzpicture}
\end{center}
and $u$ can thus be thought as a path of gauge transformations of the paths $\gamma_z$.
\end{obs}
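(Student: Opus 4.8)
The plan is to make the heuristic of the observation precise by unwinding the definition of the path internal groupoid $P^1\rho(|X|)$ from Subsection \ref{ss:paths} together with the exponential adjunction for mapping spaces, and then transporting the resulting description across the morphism $u$. First I would recall that a $1$-cube $\langle S\rangle$ of $P^1\rho(|X|)$ is precisely a homotopy square $\langle S\rangle\in\rho(|X|)_2$, whose cubical face maps $\hat{\partial}_0^\alpha=\partial_1^\alpha$ return the bottom and top edges and whose internal source and target $s=\partial_0^0$, $t=\partial_0^1$ return the left and right edges. Choosing a representative singular square $S\colon[0,1]^2\to|X|$ and applying the adjunction $\mbox{Hom}([0,1]^2,|X|)\cong\mbox{Hom}([0,1],\mbox{Hom}([0,1],|X|))$, the square is identified with the path $z\mapsto\gamma_z:=S(-,z)$ in the space of paths of $|X|$; passing to thin homotopy classes rel vertices this exhibits $\langle S\rangle$ as a homotopy path $z\mapsto\langle\gamma_z\rangle$ of discrete paths, that is, as a $1$-cube of $P^1\rho(|X|)$ viewed through its cubical (as opposed to internal) direction. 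This is the first assertion of the observation.

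For the second assertion I would transport this slicing across $u$. Since $u\colon\rho(|X|)\to\rho(G)$ is a morphism of $\omega$-groupoids it commutes with every face, degeneracy and connection, and in particular with restriction to the slice at a parameter value $z$; evaluating the image $u_1(s\langle S\rangle)$ of the left edge at the point $z$ yields an element $u(z)\in G_0$, so that $u$ restricts along $s\langle S\rangle$ to a homotopy path $z\mapsto u(z)$ in $\rho(G)_1$. Invoking the defining equation \eqref{eq:gaugetransfgeneral} of a local gauge transformation in the form \eqref{eq:gaugetransdim1} worked out in Example \ref{ex:gaugetransfdim1}, the value $B_1^1(\langle S\rangle)$ is recovered from $A_2^1(\langle S\rangle)$ by conjugation by the edge values $u_1(s\langle S\rangle)$ and $u_1(t\langle S\rangle)$; restricting this identity to the slice at $z$ shows that, for each $z$, the element $u(z)$ implements the gauge transformation from $A$ to $B$ on the path $\langle\gamma_z\rangle$ exactly as in Example \ref{ex:gaugetransfdim0}. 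Thus $u$ is literally a $z$-indexed family of gauge transformations of the paths $\gamma_z$, as claimed and as depicted.

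I expect the main obstacle to be bookkeeping at the level of thin homotopy classes rather than any substantive difficulty: the assignment $z\mapsto\gamma_z$ depends on the chosen representative $S$, so I would need to check that the induced path of classes $z\mapsto\langle\gamma_z\rangle$, and the induced family $z\mapsto u(z)$, are well defined modulo thin homotopy rel vertices and independent of the representative up to the equivalence $\equiv$. The key point making this work is precisely that $u$ is a morphism of $\omega$-groupoids, so that its compatibility with the composition operations $+_0$ and $+_1$ encodes exactly the required coherence of the family $u(z)$ in the parameter $z$; once this compatibility is recorded, the observation follows by unwinding definitions, with no further computation needed.
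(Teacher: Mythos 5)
The paper does not actually prove this statement: it is an explicitly heuristic remark, asserted with a picture and no argument, and its precise invariant content is already carried by Example \ref{ex:gaugetransfdim1} (conjugation of $A_2^1(\langle S\rangle)$ by $u_1$ applied to the left and right edges) together with the compatibility of $u$ with the compositions $+_i$. Your unwinding via the exponential adjunction is the natural way to read the picture and is consistent with how $P^1$ is set up in Subsection \ref{ss:paths}, so in spirit you and the authors are doing the same thing.

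However, the step you set aside as ``bookkeeping'' is exactly where a literal formalization fails, so you should not expect that check to go through. For an interior parameter $z\in(0,1)$ the slice $\gamma_z$ of a representative square $S$ has endpoints $S(0,z)$ and $S(1,z)$ lying on the left and right edges, hence in $|X|_1$ but generally not in $|X|_0$; it is therefore not a discrete path, and since filtered homotopy rel vertices pins only the four corners of the square, these endpoints (and the slice itself) move under a change of representative. So $z\mapsto\langle\gamma_z\rangle$ is not an invariant of $\langle S\rangle$. Likewise $u_1(s\langle S\rangle)$ is a thin homotopy class in $\rho(G)_1$ and cannot be evaluated at an interior point $z$, while $u_0$ is only defined on $\rho(|X|)_0$, i.e.\ on vertices, so the element $u(z)\in G_0$ you want does not exist as an invariant of the data $(u,\langle S\rangle)$. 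The observation should therefore be read, as the authors implicitly do, at the level of a chosen representative $S$ (where your adjunction argument is fine), the invariant statement being that the single class $u_1(s\langle S\rangle)\in\rho(G)_1$ conjugates $A$ into $B$ on $\langle S\rangle$ as in Example \ref{ex:gaugetransfdim1}, compatibly with subdivision of $\langle S\rangle$ in the parameter direction because $u$ respects $+_1$. If you want a genuinely pointwise family $u(z)$, you must first subdivide $\langle S\rangle$ so that the slicing points become vertices.
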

\noindent Definitions of gauge transformation between higher gauge fields satisfying conditions of a more categorical/differential nature appear in \cite{Pfeiffer,BaezSchreiber}. \textbf{Question:} How does the notion of local gauge transformation as appearing in \ref{def:gaugetransf} relates to that of \cite{Pfeiffer}? We have not attempted a version of local gauge transformation that might admit a higher categorical interpretation. The obvious choice of this would be a natural transformation between morphisms of internal groupoids. The study of such transformations and their relation to local gauge transformations as presented here will be studied in future work.

\subsection{Groupoid structure}\label{subsec:groupoid}
\begin{obs}\label{obs:gaugetransfcomposition}
Let $X\in\mbox{\textbf{SimpSet}}$. Let $G\in\mbox{\textbf{FGrp}}$. Let $A,B,C$ be $\lo$s on $X$, with gauge filtered group $G$, and let $u:A\rightsquigarrow B,v:B\rightsquigarrow C$ be local gauge transformations. If we write $u\odot v$ for the morphism from $\rho(|X|)$ to $\rho(G)$ associating, to every $\langle a\rangle\in \rho(|X|)_n$, the product $u_n(\langle a\rangle)\odot v_n(\langle a\rangle)$, is a gauge transformation $A\rightsquigarrow C$: The commposite map $u\odot v$ is obviously a morphism of $\omega$-groupoids. Now, given $\langle a\rangle\in \rho(|X|)_n$, applying equation (\ref{eq:gaugetransfgeneral}) to $v$ we obtain:
\begin{equation}\label{eq:gaugetransfcomposition1}
    v_n(s\langle a\rangle)\odot B_{n}^1(\langle a\rangle)\odot v_n(t\langle a\rangle)^{-1}=C_n^1(\langle a\rangle).
\end{equation}
Again, from equation (\ref{eq:gaugetransfgeneral}), now applied to $u$, we obtain the equation:
\begin{equation}\label{eq:gaugetransfcomposition2}
     u_n(s\langle a\rangle)\odot A_{n}^1(\langle a\rangle)\odot u_n(t\langle a\rangle)^{-1}=B_n^1(\langle a\rangle).
\end{equation}
Substituting equation (\ref{eq:gaugetransfcomposition2}) in equation (\ref{eq:gaugetransfcomposition1}) we obtain the following equation:
\begin{equation}\label{eq:gaugetransfcomposition3}
    u_n(s\langle a\rangle)\odot v_n(s\langle a\rangle)\odot A_{n}^1(\langle a\rangle)\odot v_n(t\langle a\rangle)^{-1}\odot u_n(t\langle a\rangle)^{-1}=C_n^1(\langle a\rangle),
\end{equation}
which translates to the equation:
\begin{equation}\label{eq:gaugetransfcomposition3}
    u_n\odot v_n(s\langle a\rangle)\odot A_{n}^1(\langle a\rangle)\odot (u_n\odot v_n)^{-1}(t\langle a\rangle)^{-1}=C_n^1(\langle a\rangle).
\end{equation}
The composite map $u\odot v$ is thus a lattice gauge transformation.
\end{obs}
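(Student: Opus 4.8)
The plan is to verify, for the pointwise product $u\odot v$, the two conditions demanded of a local gauge transformation in Definition \ref{def:gaugetransf}: that it is a morphism of $\omega$-groupoids (hence a member of $\model$ in the sense of Definition \ref{def:sigmamodel}), and that it satisfies the conjugation equation (\ref{eq:gaugetransfgeneral}) relating $A$ to $C$.

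First I would dispatch the morphism condition abstractly. By Lemma \ref{Lem:RhoGInternalGroupoid} the object $\rho(G)$ is a group internal to $\omegagpd$, so its multiplication $\odot\colon\rho(G)\times\rho(G)\to\rho(G)$ is itself a morphism of $\omega$-groupoids. Since $u,v\colon\rhogpdX\to\rho(G)$ are morphisms, their pairing $(u,v)\colon\rhogpdX\to\rho(G)\times\rho(G)$ is a morphism by the universal property of the cartesian product in $\omegagpd$, and $u\odot v$ is the composite $\odot\circ(u,v)$. A composite of morphisms of $\omega$-groupoids is again such a morphism, so $u\odot v\in\model$ with no computation required.

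The substantive part is the conjugation identity, which I would obtain by a single substitution. Fixing $\langle a\rangle\in\rhogpdX_n$, I apply (\ref{eq:gaugetransfgeneral}) to $v$ to write $C_n^1(\langle a\rangle)=v_n(s\langle a\rangle)\odot B_n^1(\langle a\rangle)\odot v_n(t\langle a\rangle)^{-1}$, and then apply it to $u$ to replace $B_n^1(\langle a\rangle)$ by $u_n(s\langle a\rangle)\odot A_n^1(\langle a\rangle)\odot u_n(t\langle a\rangle)^{-1}$. The five resulting factors telescope into a single conjugation of $A_n^1(\langle a\rangle)$. To recognize the outcome as the gauge equation for $u\odot v$ I use that $u_n$ and $v_n$ commute with the source and target maps $s,t$ (being morphisms of internal groupoids) and regroup the outer factors.

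The delicate step is precisely this regrouping, because in a non-abelian $G$ inversion is an anti-homomorphism, $(xy)^{-1}=y^{-1}x^{-1}$. One must therefore track the order of $u$ and $v$ so that the grouped prefactor and the grouped inverse postfactor genuinely coincide with the value of a single pointwise product evaluated at $s\langle a\rangle$ and $t\langle a\rangle$ respectively; this ordering is the only place where the non-commutativity of $G_0$ intervenes, and fixing it is what pins down the correct convention for the composite of two gauge transformations. Everything else is routine associativity in $\rho(G)$.
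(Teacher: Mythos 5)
Your proposal is correct and follows the same basic route as the paper: check that $u\odot v$ is a morphism of $\omega$-groupoids, then derive the conjugation identity by substituting the gauge equation for $u$ into the one for $v$. Two points of comparison are worth recording. First, your abstract justification of the morphism condition (writing $u\odot v$ as $\odot\circ(u,v)$ and invoking Lemma \ref{Lem:RhoGInternalGroupoid} together with the universal property of the product in $\omegagpd$) is cleaner than the paper's bare assertion that this is ``obvious,'' and it is the right way to dispatch that step. Second, your insistence on tracking the order of $u$ and $v$ is not merely pedantic: direct substitution of (\ref{eq:gaugetransfcomposition2}) into (\ref{eq:gaugetransfcomposition1}) yields
$v_n(s\langle a\rangle)\odot u_n(s\langle a\rangle)\odot A_n^1(\langle a\rangle)\odot u_n(t\langle a\rangle)^{-1}\odot v_n(t\langle a\rangle)^{-1}=C_n^1(\langle a\rangle)$,
whose outer factors group as $(v\odot u)(s\langle a\rangle)$ and $\bigl((v\odot u)(t\langle a\rangle)\bigr)^{-1}$ under the stated pointwise convention $(u\odot v)_n=u_n\odot v_n$; the paper's displayed equation has the two factors in the opposite order, so as written either the composite should be named $v\odot u$ or the convention for the pointwise product should be reversed. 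For non-abelian $G_0$ this matters, and your proof correctly isolates it as the only place where non-commutativity intervenes. No gap; if anything, your version repairs a sign-of-composition slip in the paper's own computation.
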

\noindent The following observation follows from an argument analogous to Observation \ref{obs:gaugetransfcomposition}.
\begin{obs}\label{obs:gaugetransfinverses}
Let $X\in\mbox{\textbf{SimpSet}}$. Let $G\in\mbox{\textbf{FGrp}}$. Let $A$ be a $\lo$ on $X$, with gauge group $G$. The constant function $\textbf{1}:\rho(|X|)\to\rho(G)$ associating to every homotopy cube $\langle a\rangle\in \rho(|X|)_n$ the constant homotopy cube $\textbf{1}$ on the identity element $1$ of $G_0$. Thus defined $\textbf{1}$ is a local gauge transformation $\textbf{1}:A\rightsquigarrow 1$. Moreover, given a second $\lo$ $B$ on $X$, with gauge group $G$, and a gauge transformation $u:A\rightsquigarrow B$, the map $u^{-1}$ associating to every homotopy cube $\langle a\rangle\in\rho(|X|)$, the inverse homotopy cube $u_n(\langle a\rangle)^{-1}$ with respect to the internal product $\odot$ on $\rho(G)$, is a gauge transformation $u^{-1}:B\rightsquigarrow A$ satisfying the equation $u\odot u^{-1}=\textbf{1}=u^{-1}\odot u$.
\end{obs}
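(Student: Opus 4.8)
The plan is to verify Definition \ref{def:gaugetransf} for the two candidate morphisms $\textbf{1}$ and $u^{-1}$, and then to check the two inverse identities pointwise. Recall that a local gauge transformation carries two independent pieces of data: first, that the underlying map $\rho(|X|)\to\rho(G)$ is a morphism in $\omegagpd$ (the structural part), and second, that it satisfies the conjugation equation (\ref{eq:gaugetransfgeneral}) for every homotopy cube (the algebraic part). I would treat $\textbf{1}$ and $u^{-1}$ separately, in each case disposing of the structural part by exhibiting the map as a composite of morphisms already known to be morphisms of $\omega$-groupoids, and then verifying (\ref{eq:gaugetransfgeneral}) by a direct computation inside the internal group $\rho(G)$, exactly as in Observation \ref{obs:gaugetransfcomposition}.

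For $\textbf{1}$: the constant map on $1\in G_0$ factors as $\rho(|X|)\xrightarrow{\ !\ }\textbf{0}\xrightarrow{\ 1\ }\rho(G)$, where $!$ is the unique morphism to the terminal $\omega$-groupoid and $1\colon\textbf{0}\to\rho(G)$ is the $0$-ary operation morphism furnished by the internal group structure on $\rho(G)$; being a composite of morphisms in $\omegagpd$, $\textbf{1}$ is itself one. Substituting $u=\textbf{1}$ into (\ref{eq:gaugetransfgeneral}) and using that $1$ is the two-sided internal identity and that $1^{-1}=1$, the left-hand side collapses to $A_{n-1}^1(\langle a\rangle)$, so $\textbf{1}$ is a gauge transformation $A\rightsquigarrow A$; this is the identity morphism at $A$ needed for the groupoid structure.

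For $u^{-1}$: it factors as $(\_)^{-1}\circ u$, the composite of $u$ with the internal inversion morphism $(\_)^{-1}\colon\rho(G)\to\rho(G)$, hence is again a morphism in $\omegagpd$. To check (\ref{eq:gaugetransfgeneral}) for $u^{-1}\colon B\rightsquigarrow A$, I would start from the equation for $u$, solve it for $B_{n-1}^1(\langle a\rangle)$, and substitute it into $u_n(s\langle a\rangle)^{-1}\odot B_{n-1}^1(\langle a\rangle)\odot u_n(t\langle a\rangle)$; associativity of $\odot$ together with the inverse axioms $x^{-1}\odot x=1=x\odot x^{-1}$ then cancel the two boundary factors and leave $A_{n-1}^1(\langle a\rangle)$. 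Finally, the identities $u\odot u^{-1}=\textbf{1}=u^{-1}\odot u$ follow cube-by-cube: for each $\langle a\rangle$ the value $u_n(\langle a\rangle)\odot u_n(\langle a\rangle)^{-1}$ equals the internal identity by the inverse axiom in $\rho(G)$, which is precisely $\textbf{1}_n(\langle a\rangle)$.

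The argument is essentially bookkeeping, and the only genuine point to get right is the structural part -- that $\textbf{1}$ and $u^{-1}$ really are internal morphisms of $\omega$-groupoids and not merely maps satisfying (\ref{eq:gaugetransfgeneral}). This is exactly where I rely on $1$ and $(\_)^{-1}$ being given as morphisms in the internal group structure of $\rho(G)$ (Lemma \ref{Lem:RhoGInternalGroupoid}), so that composites and constant maps built from them automatically respect every face, degeneracy, connection, and compositional operation. Given this, together with Observation \ref{obs:gaugetransfcomposition} and the evident associativity of $\odot$, the set of $\lo$s on $X$ with gauge transformations as morphisms forms a groupoid.
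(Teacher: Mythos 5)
Your proposal is correct and fills in exactly the argument the paper intends: the paper gives no separate proof, stating only that the observation "follows from an argument analogous to Observation \ref{obs:gaugetransfcomposition}", and your verification (factoring $\textbf{1}$ and $u^{-1}$ through the internal-group structure morphisms of $\rho(G)$ to get the $\omega$-groupoid morphism property, then cancelling in equation (\ref{eq:gaugetransfgeneral})) is that argument spelled out. Note also that your reading of the paper's "$\textbf{1}:A\rightsquigarrow 1$" as the identity transformation $A\rightsquigarrow A$ is the correct one for the groupoid structure of Definition \ref{def:GroupoidGauge}.
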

\noindent From Observations \ref{obs:gaugetransfcomposition} and \ref{obs:gaugetransfinverses} it follows that the collection of LHGFs on a simplicial set $X$, with give gauge group $G$ forms a groupoid.
\begin{definition}\label{def:GroupoidGauge}
Let $X\in\mbox{\textbf{SimpSet}}$. Let $G\in\mbox{\textbf{FGrp}}$. $\textbf{Gauge}_X^G$ is the groupoid with $\lo$s on $X$ with gauge group $G$, with gauge transformations as morphisms, and with composition operation as in Observation \ref{obs:gaugetransfcomposition}. Two $\lo$s $A$ and $B$ are gauge equivalent if $A$ and $B$ belong to the same connected component, i.e. if they are isomorphic, in $\textbf{Gauge}_X^G$.
\end{definition}
\begin{obs}\label{obs:actiongaugetransf}
Let $X\in\Simp$. Let $G\in\FGp$. $\model$ inherits, from the internal group structure of $\rho(G)$, the structure of a group. We refer to $\model$, with this structure as the group of local $G$-valued gauge transformations on $X$. The group $\model$ acts on the set $\fieldsXG$ of $\lo$s by conjugation. The groupoid $\textbf{Gauge}_X^G$ is precisely the action groupoid $\fieldsXG\rtimes \model$ of this action and two $\lo$s $A$ and $B$ are gauge equivalent if and only if they are in the same orbit under the above action.
\end{obs}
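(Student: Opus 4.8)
The plan is to prove the four assertions of the statement in turn, leaning throughout on the definitions and observations already established in this section. First, for the group structure on $\model$: by Lemma \ref{Lem:RhoGInternalGroupoid} the object $\rho(G)$ is a group internal to the cartesian category $\omegagpd$, equipped with morphisms $\odot\colon\rho(G)\times\rho(G)\to\rho(G)$, $(\_)^{-1}\colon\rho(G)\to\rho(G)$ and $1\colon\mathbf{0}\to\rho(G)$ satisfying the group axioms on the nose. Hence the representable functor $\Homset_{\omegagpd}(\,\_\,,\rho(G))$ lands in groups, so $\model=\Homset_{\omegagpd}(\rho(|X|),\rho(G))$ is a group: the product $u\odot v$ is the composite $\rho(|X|)\xrightarrow{(u,v)}\rho(G)\times\rho(G)\xrightarrow{\odot}\rho(G)$, which unwinds to the pointwise rule $(u\odot v)_n(\langle a\rangle)=u_n(\langle a\rangle)\odot v_n(\langle a\rangle)$ of Observation \ref{obs:gaugetransfcomposition}; the unit is the constant transformation $\mathbf{1}$ of Observation \ref{obs:gaugetransfinverses} and inverses are pointwise, with all axioms transported from those of $\rho(G)$ by Yoneda.

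Next I would define the conjugation action. For $u\in\model$ and $A\in\fieldsXG$ set
\[(u\cdot A)^1_{n-1}(\langle a\rangle)=u_n(s\langle a\rangle)\odot A^1_{n-1}(\langle a\rangle)\odot u_n(t\langle a\rangle)^{-1},\]
following Definition \ref{def:gaugetransf}, the object component being forced since $B\rho(G)=(\rho(G),\mathbf{0})$ by Proposition \ref{prop:Delooping}. The step I expect to require the most care is checking that $u\cdot A$ is again an LHGF, i.e.\ a morphism $P^1\rho(|X|)\to B\rho(G)$ in $\Gpd_{\omegagpd}$. Writing $s,t\colon P^1\rho(|X|)\to\rho(|X|)$ for the source and target morphisms of Lemma \ref{lem:PathsInternalGroupoid}, one has $(u\cdot A)^1=(u\circ s)\odot A^1\odot(u\circ t)^{-1}$, a pointwise product and inverse of morphisms of $\omega$-groupoids, hence again a morphism of $\omega$-groupoids respecting the cubical, connection, $+_i$ and $-_i$ structure. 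Compatibility with the internal composition $\odot=+_0$ of $P^1\rho(|X|)$ then follows from a telescoping cancellation: when $t\langle a\rangle=s\langle b\rangle$ the middle factor $u_n(t\langle a\rangle)^{-1}\odot u_n(s\langle b\rangle)$ collapses to $1$, and since $A^1$ preserves composition one obtains $(u\cdot A)^1(\langle a\rangle\odot\langle b\rangle)=(u\cdot A)^1(\langle a\rangle)\odot(u\cdot A)^1(\langle b\rangle)$. Thus $u\cdot A\in\fieldsXG$ and $u\colon A\rightsquigarrow u\cdot A$.

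With well-definedness in hand, the action axioms are already recorded: $\mathbf{1}\cdot A=A$ is immediate, compatibility of $(u,A)\mapsto u\cdot A$ with the group product of $\model$ is exactly the substitution carried out in Observation \ref{obs:gaugetransfcomposition}, and Observation \ref{obs:gaugetransfinverses} supplies the inverse transformations, so $\model$ acts on $\fieldsXG$ by conjugation. Now Definition \ref{def:gaugetransf} identifies a gauge transformation $u\colon A\rightsquigarrow B$ with precisely an element $u\in\model$ satisfying $u\cdot A=B$. Therefore the identity-on-objects assignment sending the morphism $u\colon A\rightsquigarrow B$ of $\textbf{Gauge}_X^G$ to the pair $(A,u)$ is a bijection on hom-sets, and it carries the composition of Observation \ref{obs:gaugetransfcomposition} to the group product of the action groupoid; this exhibits the isomorphism $\textbf{Gauge}_X^G\cong\fieldsXG\rtimes\model$.

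Finally, the orbit characterization is a formal consequence: in any action groupoid two objects are isomorphic if and only if they lie in a common orbit of the action. Since gauge equivalence of $A$ and $B$ means, by Definition \ref{def:GroupoidGauge}, that they are isomorphic in $\textbf{Gauge}_X^G$, the isomorphism just constructed shows that $A$ and $B$ are gauge equivalent if and only if $B=u\cdot A$ for some $u\in\model$, that is, exactly when they share an orbit. The only genuinely technical ingredient in the whole argument is the telescoping verification of the second paragraph; everything else is bookkeeping or a direct appeal to the preceding observations.
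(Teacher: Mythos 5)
Your proof is correct and follows essentially the same route as the paper, which states this Observation without a separate argument, relying on the preceding Observations \ref{obs:gaugetransfcomposition} and \ref{obs:gaugetransfinverses} together with the internal group structure of $\rho(G)$. The one point you add explicitly --- the telescoping verification that $u\cdot A$ is again a morphism $P^1\rho(|X|)\to B\rho(G)$, so that the conjugation formula genuinely defines an action on $\fieldsXG$ --- is a detail the paper leaves implicit, and your check of it is sound.
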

\subsection{Local gauge transformations on globular LHGFs}
\begin{obs}\label{obs:GlobularVertices}
Let $X\in\mbox{\textbf{SimpSet}}$. Let $G\in\mbox{\textbf{FGrp}}$. Let $A,B$ be LHGFs on $X$, with gauge filtered group $G$, and let $u: A\rightsquigarrow B$ be a local gauge transformation. Consider the globular LHGFs $\hat{A}$ and $\hat{B}$. These are morphisms $\hat{A},\hat{B}:\rho^\bigcirc(|X|)\to \rho(G)[-1]$, in $\infty$\textbf{Gpd}. Given a globular homotopy cube $\langle a\rangle\in\rho^\bigcirc(|X|)_n$. Since $\langle a\rangle$ is globular, $s\langle a\rangle,t\langle a\rangle\in Im \epsilon_0^n$, then $u( s\langle a\rangle)$ and $u(t\langle a\rangle)$ depend only on the value of $u$ on the vertices $\epsilon_0^n \partial_0^{0^n}\langle a \rangle$ and $\epsilon_0^n \partial_0^{1^n}\langle a \rangle$. The action of $u$ on $\hat{A}$ and $\hat{B}$ is thus completely determined by the action of $u$ evaluated on vertices of $X$.
\end{obs}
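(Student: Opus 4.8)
The plan is to reduce the statement to two facts: the explicit description of the internal source and target of the path internal groupoid from Lemma~\ref{lem:PathsInternalGroupoid}, and the fact that a morphism of $\omega$-groupoids preserves degeneracies. Once these are in place, the only genuine work is to check that, for a globular cube, the internal source and target are the totally degenerate cubes sitting on the two poles of the globe.

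First I would recall that, by Lemma~\ref{lem:PathsInternalGroupoid}, the internal source and target of $P^1\rho(|X|)^\bullet$ are the face maps $s=\partial_0^0$ and $t=\partial_0^1$ in the distinguished path direction $0$. Thus for a globular homotopy cube $\langle a\rangle\in\rho(|X|)_n$ we have $s\langle a\rangle=\partial_0^0\langle a\rangle$ and $t\langle a\rangle=\partial_0^1\langle a\rangle$, and it is exactly these two faces whose behaviour under $u$ we must control.

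Next comes the geometric heart of the argument. I would argue by induction on $n$ that, since $\langle a\rangle$ is globular, the faces $\partial_0^0\langle a\rangle$ and $\partial_0^1\langle a\rangle$ are totally degenerate. For $n=1$ these faces are vertices, so there is nothing to prove. For the inductive step, the cubical identities $\partial_j^\beta\partial_0^\alpha=\partial_0^\alpha\partial_{j+1}^\beta$ show that $\partial_0^\alpha\langle a\rangle$ is again globular, while the globular condition on $\langle a\rangle$ simultaneously forces $\partial_0^\alpha\langle a\rangle$ to lie in the image of a degeneracy; combining these two facts with the inductive hypothesis collapses $\partial_0^\alpha\langle a\rangle$ all the way down to $\epsilon_0^n\,\partial_0^{\alpha^n}\langle a\rangle$, the totally degenerate cube on the pole $\partial_0^{\alpha^n}\langle a\rangle\in X_0$. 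This identifies $s\langle a\rangle$ and $t\langle a\rangle$ with the iterated degeneracies of the two vertices named in the statement.

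Finally, because $u\colon\rho(|X|)\to\rho(G)$ is a morphism of $\omega$-groupoids it intertwines the degeneracy maps, so $u(\epsilon_0^n v)=\epsilon_0^n\,u_0(v)$ for every vertex $v$. Together with the previous step this gives $u(s\langle a\rangle)=\epsilon_0^n\,u_0(\partial_0^{0^n}\langle a\rangle)$ and $u(t\langle a\rangle)=\epsilon_0^n\,u_0(\partial_0^{1^n}\langle a\rangle)$, so both factors appearing in the defining equation~(\ref{eq:gaugetransfgeneral}) relating $\hat{A}$ and $\hat{B}$ via Proposition~\ref{prop:equivalenceGlobularCubic} depend only on the value of $u_0$ at the two pole vertices. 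I expect the inductive step---verifying that the globular condition genuinely collapses $\partial_0^\alpha\langle a\rangle$ to a single vertex rather than merely to a lower-dimensional globular or singly-degenerate cube---to be the one place requiring care; the identification of $s,t$ with $\partial_0^0,\partial_0^1$ and the preservation of degeneracies by $u$ are otherwise immediate from the definitions.
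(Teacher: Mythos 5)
Your proposal is correct and follows essentially the same route as the paper: the statement is an Observation whose entire justification is the inline remark that globularity forces $s\langle a\rangle=\partial_0^0\langle a\rangle$ and $t\langle a\rangle=\partial_0^1\langle a\rangle$ to be totally degenerate cubes on the two pole vertices, after which the fact that $u$ is a morphism of $\omega$-groupoids (hence intertwines degeneracies) finishes the argument. The only difference is that you derive the total degeneracy of the two faces by an induction on $n$, whereas the paper treats it as immediate from the definition of globular cube; this is a filling-in of detail rather than a genuinely different approach.
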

\begin{definition}\label{def:globulargaugetransf}
Let $X\in\mbox{\textbf{SimpSet}}$. Let $G\in\mbox{\textbf{FGrp}}$. Let $A,B$ be globular LHGFs on $X$, with gauge filtered group $G$. A local gauge transformation $u:A\rightsquigarrow B$ is a function $u:X_0\to G_0$ such that, for every homotopy 2-cell $\langle a\rangle\in \rho^\bigcirc(|X|)_n$, the following equation holds in $\rho^\bigcirc(G)[-1]$:
\begin{equation}\label{eq:globulargaugetransf}
    u(s^n\langle a\rangle)+_0 A(\langle a\rangle)-_0u(t^n\langle a\rangle)=B(\langle a\rangle).
\end{equation}
\end{definition}
\begin{ex}\label{ex:globulargaugetransfdim1}[Gauge transformations on paths]
Let $X\in\mbox{\textbf{SimpSet}}$. Let $G\in\mbox{\textbf{FGrp}}$. Let $A,B$ be globular LHGFs on $X$, with filtered gauge group $G$, and $u:A\rightsquigarrow B$ be a gauge transformation. Observe that every homotopy path in $\rho(|X|)_1$ is globular. Let $\langle\gamma\rangle\in \rho^\bigcirc (|X|)_1$ be a path from $x$ to $y$. In that case $A(\langle \gamma\rangle),B(\langle\gamma\rangle)$, $u(x)$ and $u(y)$ are all elements of $G_0$, and equation (\ref{eq:globulargaugetransf}) becomes the following equation:
\begin{equation}\label{eq:globulargaugetransdim0}
  u(x)+_0 A_1(\langle \gamma\rangle)-_0 u(y)=B_{1}(\langle \gamma\rangle),  
\end{equation}
in $\rho^\bigcirc(|X|)$, which translates to the following equation in the group $G_0$:
\begin{equation}\label{eq:globulargaugetransdim0inG}
  u(x)A_1(\langle \gamma\rangle) u(y)^{-1}=B_{1}(\langle \gamma\rangle).   
\end{equation}
Contrast this with equation (\ref{eq:gaugetransdim0}).
\end{ex}
\begin{ex}\label{ex:globulargaugetransfonglobes}[Gauge transformations on 2-cells]
Let $X\in\mbox{\textbf{SimpSet}}$. Let $G\in\mbox{\textbf{FGrp}}$. Let $A,B$ be globular LHGFs on $X$, with gauge group $G$, and $u:A\rightsquigarrow B$ be a gauge transformation. Let $\langle S\rangle\in\rho^\bigcirc(|X|)_2$ be a homotopy 2-cell such that $s\langle S\rangle=\gamma,t\langle S\rangle=\eta, s^2\langle S\rangle=x$ and $t^2 \langle S\rangle=y$. In that case $A(\langle S\rangle)$ and $B(\langle S\rangle)$ are 2-cells in $\rho^\bigcirc(G)[-1]$ with source and target $A(\langle\gamma\rangle)$ and $A(\langle\eta\rangle)$, and $B(\langle\gamma\rangle)$ and $B(\langle\eta\rangle)$ respectively. Now, $u(x),u(y)$ are elements of $G_0$ and thus $0$-cells in $\rho^\bigcirc(G)$. The identities $i(u(x))$ and $i(u(y))$ are paths in $\rho^\bigcirc(G)$, and thus are 2-cells in the shifted $\infty$-groupoid $\rho^\bigcirc(G)[-1]$. Equation (\ref{eq:globulargaugetransf}) becomes the following equation:
\begin{equation}\label{eq:globulargaugetransfonglobes}
    u(x)+_0 A(\langle S\rangle)-_0u(y)=B(\langle S\rangle).
\end{equation}
Given a 2-cell $\langle S\rangle$ in $\rho^\bigcirc(|X|)$, as below, with horizontal source and target $s^2(\langle S\rangle)$ and $t^2(\langle S\rangle$, the value of $B$ at $\langle S\rangle$ can thus be recovered from the value of $A$ at $\langle S\rangle$ by conjugating, in $\rho^\bigcirc(G)[-1]$ by the value of $u$ at the vertices $x$ and $y$. Contrast this with Example \ref{ex:gaugetransfdim1} where to specify the action of $u$ on a square as above it was necessary to specify the value of $u$ on homotopy paths and not just on vertices.
\end{ex}
\noindent Observations analogous to \ref{obs:actiongaugetransf} hold for local gauge transformations of globular $\lo$s.

\section{The local to global problem for LHGF}\label{s:LocaltoGlobal}
\noindent In this section we make use of ideas developed in Section \ref{s:GaugeEquivalence} to provide a definition of global LHGF. We provide a local to global principle for globular global LHGFs satisfying certain compatibility conditions. \textit{Note:} In this section we drop the requirement that the geometric realization $|X|$ of a simplicial set $X$ must be contractible. We instead make a choice of good cover for $X$ and focus on LHGFs defined with respect to the given cover.
\subsection{Global LHGFs}\label{ss:gobalLHF}

\noindent From Section \ref{s:LHGF} on it was assumed that the geometric realization $|X|$ of the base simplicial set $X$ of a LHGF is contractible. In this section we consider instead simplicial sets $X$ admitting a good cover $\{X_1,...,X_n\}$, i.e. a cover where geometric realization of any intersection of the $X_i$'s is contractible; such that $X$ decomposes as the gluing of the simplexes $X_1,...,X_n$. We express this by requiring the an equation of the form $X=colimX_i$ holds. A typical example is the cover by simplexes of maximal dimension of $X$. The reader is welcome to think of this particular case throughout this section. We start by defining global cubical LHGFs with respect to a good cover of a not-necessarily-contractible simplicial set $X$. 
A part from the data coming from the decomposition $X=colim X_i$ of $X$ together with the condition that $\left\{X_1,...,X_n\right\}$ forms a good cover, our definition of global LHGF assumes a local trivialization over each $X_i$, together with knowledge of the corresponding transition function $\psi_{ij}:\rho(|X_{ij}|)\to \rho(G)$ at each intersection $X_{ij}= X_i \cap X_j$.

\begin{definition}\label{def:globalLHGF}
Let $X\in\Simp$. Let $\{ X_1,...,X_n\}$ be a good cover of $X$.
Let $G$ be a Lie group. 
Let $\{ \psi_{ij}\in\sigma(X_{ij},G) \}$ be a system of transition functions 
determining a Cech cocycle. 
A global LHGF $A$ on the pair $(\{X_i\}, \{ \psi_{ij} \})$, with gauge group $G$, is an $n$-tuple $(A_i)$ of local LHGFs $A_i\in\mathcal{M}(X_i,G)$ such that the following compatibility condition holds for any homotopy cube $\langle a\rangle \in \rho (X_{ij})$ 

	\begin{equation}\label{eq:compatibilityglobal}
	A_i (\langle a\rangle) =  \psi_{ij}(s \langle a\rangle) \odot A_j (\langle a\rangle) \odot (\psi_{ij}(t \langle a\rangle))^{-1}. 
	\end{equation}
 \noindent We will write $\mathcal{M}_{gl}(\{ X_i\}, \{ \psi_{ij} \},G)$ for the set of global LHGFs on $(\{ X_i\}, \{ \psi_{ij} \})$ with gauge group $G$.
\end{definition}

\begin{definition}\label{def:gaugetransfONTrFns}
Let $X\in\mbox{\textbf{SimpSet}}$. Let $G\in\mbox{\textbf{FGrp}}$. Let $\{ \psi_{ij} \}$ be a system of transition functions 
determining a Cech cocycle. 
A local gauge transformation $u_k$ over $X_k$ 
transforms the system of transition functions to the new system 
$\{ \psi'_{ij} \}$ determined as follows: 
if $i,j \neq k$ then $\psi'_{ij}= \psi_{ij}$; 
if $i=k$ then $\psi'_{ij}= u_i\odot \psi_{ij}$; 
if $j=k$ then $\psi'_{ij}= \psi_{ij}\odot u_j^{-1}$. 
\end{definition}

Notice that the compatibility condition in Definition \ref{def:globalLHGF} is invariant under gauge transformations. 

\begin{obs}\label{obs:GlobularGlobal}
Globular global LHGFs with respect to a pair $(\{X_i\},\{\psi_{ij}\})$ as in Definition \ref{def:globalLHGF} are defined analogously. Notice however that in that case, the compatibility conditions \ref{eq:compatibilityglobal} depend only on the restriction of the transition functions to the vertices of $X$, which do not characterize a Cech cocycle. In the next section we will show, by proving that globular global LHGFs induce Extended Lattice Gauge Fields \cite{MenesesZapata1,MenesesZapata2}, that the bundle structure can be extracted from a globular global LHGF. Any two choices of transition functions on $\{X_1,...,X_n\}$ are equivalent. We thus modify the notation appearing in Definition \ref{def:globalLHGF}. We will denote by $\mathcal{M}_{gl}^\bigcirc(X,G)$ the set of globular global LHGFs on the pair $(\{ X_i\}, \{ \psi_{ij} \})$ with gauge group $G$. 
\end{obs}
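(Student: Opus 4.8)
The plan is to prove invariance by direct verification: I would show that if a tuple $(A_i)$ satisfies the compatibility condition (\ref{eq:compatibilityglobal}) with respect to a system $\{\psi_{ij}\}$, then the gauge-transformed tuple satisfies the same condition with respect to the transformed system $\{\psi'_{ij}\}$ of Definition \ref{def:gaugetransfONTrFns}. The first thing I would make explicit is how a gauge transformation $u_k$ over $X_k$ acts on the local fields themselves, since Definition \ref{def:gaugetransfONTrFns} only records its effect on the transition functions: by Definition \ref{def:gaugetransf} it fixes $A_i$ for $i\neq k$ and conjugates $A_k$, so the transformed field satisfies $A'_k(\langle a\rangle)=u_k(s\langle a\rangle)\odot A_k(\langle a\rangle)\odot u_k(t\langle a\rangle)^{-1}$. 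Coupling these two transformation rules correctly is essentially the only bookkeeping in the argument.

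With both rules in hand, I would split into three cases according to whether the pair $(i,j)$ meets the index $k$, using throughout that the operations are computed pointwise in the internal group $\rho(G)$, so that $(u_k\odot\psi_{kj})(x)=u_k(x)\odot\psi_{kj}(x)$ and $(\psi)^{-1}(x)=\psi(x)^{-1}$, as in Observations \ref{obs:gaugetransfcomposition} and \ref{obs:gaugetransfinverses}. When $i,j\neq k$ nothing changes and the condition holds by hypothesis. When $i=k$, $j\neq k$, expanding the right-hand side of the transformed condition yields $u_k(s\langle a\rangle)\odot\bigl(\psi_{kj}(s\langle a\rangle)\odot A_j(\langle a\rangle)\odot\psi_{kj}(t\langle a\rangle)^{-1}\bigr)\odot u_k(t\langle a\rangle)^{-1}$; the parenthesised factor equals $A_k(\langle a\rangle)$ by the original compatibility, and the surrounding conjugation reproduces exactly $A'_k(\langle a\rangle)$. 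The case $i\neq k$, $j=k$ is symmetric: there the factors $u_k(s\langle a\rangle)^{-1}$ and $u_k(t\langle a\rangle)$ arising from $\psi'_{ik}=\psi_{ik}\odot u_k^{-1}$ cancel against the conjugating factors hidden inside $A'_k$, leaving $\psi_{ik}(s\langle a\rangle)\odot A_k(\langle a\rangle)\odot\psi_{ik}(t\langle a\rangle)^{-1}$, which is $A_i(\langle a\rangle)=A'_i(\langle a\rangle)$ by the original condition.

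Since each case reduces to the original compatibility condition together with associativity and cancellation in $\rho(G)$, the verification is routine; the main obstacle is not any single hard step but keeping the coupled transformations of the fields and of the transition functions consistent, and remembering that $s$ and $t$ denote the internal source and target in $P^1\rho(|X|)$, so that the arguments $s\langle a\rangle$ and $t\langle a\rangle$ are matched correctly on both sides of each identity. The same computation, read off on vertices alone, underlies the observation that follows: in the globular setting the transition functions and gauge transformations are functions on $X_0$ (Definition \ref{def:globulargaugetransf}), so the compatibility condition depends only on vertex data, and any two systems of transition functions are related by a vertex-valued gauge transformation and hence are equivalent.
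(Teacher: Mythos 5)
Your computation is correct as far as it goes, but most of your effort lands on a different claim than the one this observation actually makes. The case-by-case verification that the compatibility condition (\ref{eq:compatibilityglobal}) is preserved when a local gauge transformation $u_k$ simultaneously conjugates $A_k$ and modifies $\{\psi_{ij}\}$ per Definition \ref{def:gaugetransfONTrFns} is exactly the content of the unnumbered remark \emph{preceding} Observation \ref{obs:GlobularGlobal} (``the compatibility condition in Definition \ref{def:globalLHGF} is invariant under gauge transformations''), which the paper asserts without proof; your three cases and the cancellation of $u_k(s\langle a\rangle)^{\pm 1}$, $u_k(t\langle a\rangle)^{\pm 1}$ against the conjugating factors inside $A'_k$ supply that missing verification, and combined with Observation \ref{1onVertices} it does yield the claim that any two systems of transition functions give equivalent data. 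That part is fine and is a legitimate expansion of what the paper leaves implicit.

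The genuine gap is in the step you dispatch in one clause at the end: \emph{why} the compatibility condition, in the globular case, depends only on $\psi_{ij}|_{X_0}$. You justify this by citing Definition \ref{def:globulargaugetransf}, i.e.\ by the fact that globular gauge transformations are \emph{defined} as functions on $X_0$ --- but that definition is itself only reasonable \emph{because} of the fact you are trying to establish, so the appeal is circular. The paper's actual reason is the degeneracy argument of Observation \ref{obs:GlobularVertices}: for a globular homotopy cube $\langle a\rangle$ one has $s\langle a\rangle, t\langle a\rangle \in \mathrm{Im}\,\epsilon_0^n$, i.e.\ the source and target faces are iterated degeneracies of the vertices $\epsilon_0^n\partial_0^{0^n}\langle a\rangle$ and $\epsilon_0^n\partial_0^{1^n}\langle a\rangle$; since $\psi_{ij}\colon\rho(|X_{ij}|)\to\rho(G)$ is a morphism of $\omega$-groupoids it commutes with degeneracies, so $\psi_{ij}(s\langle a\rangle)$ and $\psi_{ij}(t\langle a\rangle)$ are degenerate cubes determined entirely by the values of $\psi_{ij}$ on those vertices. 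This is the one substantive assertion of the observation (it is also what explains why the vertex data ``do not characterize a Cech cocycle''), and your writeup should derive it from globularity rather than from the downstream definition.
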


\subsection{Local to global}\label{ss:LocaltoGlobal}

\noindent Consider a good cover $\{X_1,...,X_n\}$ of $X\in\Simp$, together with transition functions evaluated at vertices $\{ \psi_{ij}|_{X_0} \}$.
Let $(A_i)$ be a globular global LHGF, with respect to the pair $(\{ X_i\},\psi_{ij})$ as above.
In this subsection we prove that, from the data of $(A_i)$, we can reconstruct a morphism $A\in \mbox{Hom}_{\inftygrpd}(\rhogpdglobX,\rho^\bigcirc(G)[-1])$. This solves the local to global problem for gauge fields. We begin with two observations.

\begin{obs}\label{1onVertices}
    Given two systems of transition functions evaluated at vertices, $\{ \psi_{ij}|_{X_0}\}, \{ \psi'_{ij}|_{X_0}\}$, it is clear from Definition \ref{def:gaugetransfONTrFns} that there is a collection of local gauge transformations $\{ u_i \}$ taking $\{ \psi_{ij}|_{X_0}\}$ to $\{ \psi'_{ij}|_{X_0}\}$. 
In particular, given a system of transition functions evaluated at vertices $\{ \psi_{ij}|_{X_0}\}$ there is a collection of local gauge transformations $\{ u_i \}$ taking it to the the constant $1$ function. We will write this as $\{ \psi_{ij}|_{X_0} = {\mathbf{1}}\}$. 

In the rest of this subsection we will restrict to the case in which we are given a system of transition functions such that $\{ \psi_{ij}|_{X_0} = {\mathbf{1}}\}$ because if we are given a system of transition functions not satisfying the condition, we can apply local gauge transformations to bring it to the simpler form. 
\end{obs}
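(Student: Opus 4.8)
The plan is to recognize Observation~\ref{1onVertices} as a restatement, in the present language, of the classical fact that a \v{C}ech $1$-cocycle over a discrete base is automatically a coboundary. Restricting the transition functions to the vertex set $X_0$ collapses all positive-dimensional data, so each $\psi_{ij}|_{X_0}$ is just a function $(X_{ij})_0\to G_0$ and, by hypothesis, the family satisfies the pointwise cocycle identities $\psi_{ij}(v)\odot\psi_{jk}(v)=\psi_{ik}(v)$ and $\psi_{ii}(v)=\mathbf 1$ at every vertex $v$. By Definition~\ref{def:gaugetransfONTrFns}, applying one local gauge transformation $u_k$ over each cover element $X_k$ acts on the system by $\psi'_{ij}=u_i\odot\psi_{ij}\odot u_j^{-1}$; hence the entire statement reduces to solving, vertex by vertex, the coboundary equation $\psi_{ij}(v)=u_i(v)^{-1}\odot u_j(v)$.

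First I would trivialize a single system. For each vertex $v$ set $I(v)=\{\,i:v\in X_i\,\}$ and fix a base index $i_0(v)\in I(v)$. Defining $u_i(v):=\psi_{i_0(v)\,i}(v)$ for all $i\in I(v)$ gives a well-defined function on the vertices of every $X_i$; by Observation~\ref{obs:GlobularVertices} this is exactly the data of a local gauge transformation in the sense of Definition~\ref{def:globulargaugetransf}, since in the globular setting such a transformation is nothing but a function $X_0\to G_0$. The cocycle identity then yields, at each $v$,
\[
u_i(v)\odot\psi_{ij}(v)\odot u_j(v)^{-1}=\psi_{i_0 i}(v)\odot\psi_{ij}(v)\odot\psi_{i_0 j}(v)^{-1}=\psi_{i_0 i_0}(v)=\mathbf 1,
\]
so $\{u_i\}$ carries $\{\psi_{ij}|_{X_0}\}$ to the constant system $\mathbf 1$. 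This establishes the ``in particular'' clause.

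The two-system statement then follows formally. Given a second system $\{\psi'_{ij}|_{X_0}\}$, produce $\{u'_i\}$ trivializing it in the same way; since gauge transformations compose and invert (Observations~\ref{obs:gaugetransfcomposition} and~\ref{obs:gaugetransfinverses}, and the groupoid of Definition~\ref{def:GroupoidGauge}), the collection $\{(u'_i)^{-1}\odot u_i\}$ is again a system of local gauge transformations, and tracing $\psi\mapsto\mathbf 1\mapsto\psi'$ shows it carries $\{\psi_{ij}|_{X_0}\}$ to $\{\psi'_{ij}|_{X_0}\}$. I would keep the composition convention $(b_i)\ast(a_i)=(b_i\odot a_i)$ (apply $a$ first) explicit in order to avoid orientation errors.

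I do not anticipate a genuine obstacle: because $X_0$ is discrete there is no topological or cohomological obstruction, so the only content is the elementary coboundary computation above --- this is the ``a bundle over a point is trivial'' phenomenon. The sole point needing care is bookkeeping: verifying that the vertex-wise choice of base index $i_0(v)$ yields functions $u_i$ genuinely defined on all of $(X_i)_0$, and that the one-sided multiplication conventions of Definition~\ref{def:gaugetransfONTrFns} (left multiplication on the first index, right inversion on the second) are matched consistently through the cocycle manipulation.
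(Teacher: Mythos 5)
Your argument is correct, and it supplies a genuine proof where the paper offers none: Observation~\ref{1onVertices} is stated as ``clear from Definition~\ref{def:gaugetransfONTrFns}'' with no further justification, so there is no paper proof to compare against line by line. Your route --- restrict to the discrete set $X_0$, observe that the \v{C}ech cocycle identity $\psi_{i_0 i}(v)\odot\psi_{ij}(v)=\psi_{i_0 j}(v)$ holds pointwise at each vertex, and trivialize by the standard coboundary formula $u_i(v)=\psi_{i_0(v)\,i}(v)$ --- is exactly the ``cocycle over a point is a coboundary'' computation, and it correctly identifies that the real hypothesis doing the work is the cocycle condition carried by the system $\{\psi_{ij}\}$ (stated in Definitions~\ref{def:globalLHGF} and~\ref{def:gaugetransfONTrFns}), not merely the transformation rule itself; this is a useful clarification of what the paper treats as obvious. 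Two small points of bookkeeping: in your displayed chain the middle expression simplifies as $\psi_{i_0 i}(v)\odot\psi_{ij}(v)\odot\psi_{i_0 j}(v)^{-1}=\psi_{i_0 j}(v)\odot\psi_{i_0 j}(v)^{-1}=\mathbf 1$ (your intermediate ``$=\psi_{i_0 i_0}(v)$'' is only literally an identity after rewriting $\psi_{i_0 j}^{-1}=\psi_{j i_0}$, which the cocycle and normalization conditions permit, so nothing is wrong, just slightly compressed); and your $u_i$ is a priori only a function on $(X_i)_0$, which by Observation~\ref{obs:GlobularVertices} and Definition~\ref{def:globulargaugetransf} is all the data a gauge transformation contributes in the globular/vertex-restricted setting of this subsection, so no extension argument is needed for the statement as given. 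The reduction of the two-system case to two trivializations composed through the groupoid structure of Observations~\ref{obs:gaugetransfcomposition} and~\ref{obs:gaugetransfinverses} is also correct.
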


\begin{obs}\label{obs:Global}
This Observation follows directly from Observation \ref{obs:GlobularGlobal}. 
Let $G$ be a Lie group. Let $X\in\Simp$ and let $\{X_1,X_2\}$ be a good cover of $X$. Let $\psi_{12}$ be a transition function, such that 
$\psi_{12}|_{Y_0}= {\mathbf{1}}$. In that case the compatibility condition of Definition \ref{def:globalLHGF} is expressed by the fact that the following is a pullback diagram

\[\mathcal{M}_{gl}^\bigcirc(X,G)\to\mathcal{M}^\bigcirc(X_1,G)\times \mathcal{M}^\bigcirc(X_2,G)\rightrightarrows\mathcal{M}^\bigcirc(Y,G)\]

\noindent More generally, if $\left\{X_1,...,X_n\right\}$ is a good cover of $X$, together with transition functions $\psi_{ij}$ such that, evaluated at vertices are equal the constant 1 function, then the set $\mathcal{M}_{gl}^\bigcirc(X, G)$ satisfies the equation 
 \begin{equation}
 \mathcal{M}_{gl}^\bigcirc(X, G)=lim\mathcal{M}^\bigcirc(X_i,G)
 \end{equation} 
\end{obs}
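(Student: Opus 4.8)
The plan is to reduce the statement to the Seifert--van Kampen machinery foreshadowed in the commented cubical theorem, after first exploiting the fact that in the \emph{globular} setting the compatibility condition of Definition \ref{def:globalLHGF} sees the transition functions only through their values on vertices. First I would unwind the hypothesis $\psi_{12}|_{Y_0}=\mathbf{1}$. By Observation \ref{obs:GlobularVertices}, for any globular homotopy cube $\langle a\rangle$ the iterated source and target $s^n\langle a\rangle$ and $t^n\langle a\rangle$ lie in the image of $\epsilon_0^n$, so the factors $\psi_{12}(s\langle a\rangle)$ and $\psi_{12}(t\langle a\rangle)$ in the compatibility equation \eqref{eq:compatibilityglobal} depend only on the restriction of $\psi_{12}$ to $Y_0$. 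Under the assumption that this restriction is the constant function $\mathbf{1}$, the conjugation in \eqref{eq:compatibilityglobal} is trivial and the condition degenerates to plain agreement $A_1(\langle a\rangle)=A_2(\langle a\rangle)$ on $\rho^\bigcirc(|Y|)$. Thus, by the globular analogue of Observation \ref{obs:GlobularGlobal}, the set $\mathcal{M}_{gl}^\bigcirc(X,G)$ is exactly the set of pairs $(A_1,A_2)\in\mathcal{M}^\bigcirc(X_1,G)\times\mathcal{M}^\bigcirc(X_2,G)$ whose restrictions to $Y$ coincide, which is precisely the equalizer (equivalently, pullback) displayed in the statement.

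Next I would identify this equalizer with morphisms out of $\rhogpdglobX$, following the route of the commented Theorem. Since $X=\mathrm{colim}\,X_i$ and geometric realization is cocontinuous, realizing the pushout produces a pushout square of filtered CW-complexes. Choosing neighborhoods $U_1,U_2$ of $|X_1|,|X_2|$ that deformation retract onto them with $U_1\cap U_2$ retracting onto $|Y|$, the good-cover hypothesis (contractibility of intersections) supplies the connectivity and filtration conditions required to invoke the HHSvK theorem \cite[Thm. 14.3.1]{BrownBook}, yielding a coequalizer $\rho(|Y|)\rightrightarrows\rho(|X_1|)\sqcup\rho(|X_2|)\to\rho(|X|)$ in $\omegagpd$. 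Transporting along the equivalence $H\colon\omegagpd\to\inftygrpd$ gives the corresponding coequalizer $\rho^\bigcirc(|Y|)\rightrightarrows\rho^\bigcirc(|X_1|)\sqcup\rho^\bigcirc(|X_2|)\to\rhogpdglobX$. Applying the contravariant functor $\mathrm{Hom}_{\inftygrpd}(\_,\rho^\bigcirc(G)[-1])$ then turns this coequalizer into precisely the equalizer in the statement, identifying $\mathcal{M}_{gl}^\bigcirc(X,G)$ with $\mathrm{Hom}_{\inftygrpd}(\rhogpdglobX,\rho^\bigcirc(G)[-1])$; this is the desired local-to-global reconstruction.

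The general $n$-fold case follows by the same argument applied to the full good cover: the coequalizer presentation of $\rhogpdglobX$ generalizes to a colimit presentation $\rhogpdglobX=\mathrm{colim}\,\rho^\bigcirc(|X_i|)$, either by induction over the pushout decomposition or directly from the covering form of HHSvK, and $\mathrm{Hom}_{\inftygrpd}(\_,\rho^\bigcirc(G)[-1])$ converts this colimit into the limit $\mathcal{M}_{gl}^\bigcirc(X,G)=\lim\mathcal{M}^\bigcirc(X_i,G)$. The main obstacle is the first step: making rigorous that, for globular cells only, the transition-function conjugation contributes nothing beyond vertex data, so that the genuinely cocycle-dependent compatibility of Definition \ref{def:globalLHGF} collapses to strict agreement. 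This is exactly the simplification flagged in Observation \ref{obs:GlobularGlobal}, and it is what distinguishes the globular situation from the cubical one and makes the clean limit description possible. A secondary technical point to verify is that a good cover genuinely satisfies the hypotheses of \cite[Thm. 14.3.1]{BrownBook}, which is where contractibility of the intersections is used.
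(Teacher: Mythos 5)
Your first paragraph is exactly the paper's argument: Observation \ref{obs:GlobularGlobal} (via Observation \ref{obs:GlobularVertices}) reduces the compatibility condition \eqref{eq:compatibilityglobal} on globular cells to vertex data, so with $\psi_{ij}|_{X_0}=\mathbf{1}$ it collapses to strict agreement on intersections and $\mathcal{M}_{gl}^\bigcirc(X,G)$ is precisely the equalizer, hence the limit $\lim\mathcal{M}^\bigcirc(X_i,G)$. Your second and third paragraphs (the HHSvK identification of this limit with $\mbox{Hom}_{\inftygrpd}(\rho^\bigcirc(|X|),\rho^\bigcirc(G)[-1])$) are sound but constitute the proof of the subsequent Theorem \ref{thm:localglobal} rather than of this Observation, which requires no Seifert--van Kampen input.
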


\noindent Given this result, it is reasonable to consider that morphisms in 
$\mathcal{M}_{gl}^\bigcirc(X,G)$ are 
globular global LHGFs. 
In the following theorem observe that the elements of the set of morphisms $\mbox{Hom}_{\inftygrpd}(\rhogpdglobX,\rho^\bigcirc(G)[-1])$ are not considered $\lo$s since we are not assuming the geometric realization $|X|$ is contractible.

\begin{thm}\label{thm:localglobal}
  Let $G$ be a Lie group. Let $X\in \Simp$. Let $\left\{X_1,...,X_n\right\}$ be a good cover of $X$, with transition functions $\psi_{ij}$, such that, evaluated at vertices are equal the constant 1 function, then the set $\mathcal{M}_{gl}^\bigcirc(X,G)$ satisfies the equation 
 \begin{equation}
 \mathcal{M}_{gl}^\bigcirc(X,G)\cong\mbox{Hom}_{\inftygrpd}(\rhogpdglobX,\rho^\bigcirc(G)[-1])
 \end{equation}
\end{thm}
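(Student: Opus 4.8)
The plan is to combine Observation \ref{obs:Global}, which already presents the left hand side as a limit, with a Higher Homotopy Seifert--van Kampen argument presenting $\rhogpdglobX$ as the matching colimit, and then to use that a contravariant hom functor sends colimits to limits. By Observation \ref{obs:Global}, the hypothesis that the transition functions are trivial on vertices yields
\[
\mathcal{M}_{gl}^\bigcirc(X,G)=\lim\mathcal{M}^\bigcirc(X_i,G),
\]
the limit being taken over the \v{C}ech-type diagram of the cover, i.e.\ over the $X_i$ together with their intersections $X_{ij}$. Since $\{X_1,\dots,X_n\}$ is a good cover, every $|X_i|$ and every intersection $|X_{ij}|$ is contractible, so each term is by construction (Observation \ref{obs:GlobularLHGF}) a set of globular local fields, $\mathcal{M}^\bigcirc(X_i,G)=\mbox{Hom}_{\inftygrpd}(\rho^\bigcirc(|X_i|),\rho^\bigcirc(G)[-1])$. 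It therefore suffices to identify this limit of hom-sets with $\mbox{Hom}_{\inftygrpd}(\rhogpdglobX,\rho^\bigcirc(G)[-1])$.

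For this I would show that $\rhogpdglobX=\operatorname{colim}\rho^\bigcirc(|X_i|)$ in $\inftygrpd$. As geometric realization is cocontinuous into filtered CW-complexes, $|X|$ is the colimit of the $|X_i|$. Choosing open neighborhoods $U_i\supseteq X_i$ in $X$ that deformation retract onto $X_i$, with each $U_i\cap U_j$ retracting onto $X_{ij}$, the cover $\{U_i\}$ satisfies the hypotheses of the HHSvK theorem \cite[Thm.~14.3.1]{BrownBook}; this gives a coequalizer presentation of $\rho(|X|)$ out of the $\rho(|U_i|)\simeq\rho(|X_i|)$, hence
\[
\rho(|X|)=\operatorname{colim}\rho(|X_i|)
\]
in $\omegagpd$. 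Applying the equivalence $H\colon\omegagpd\to\inftygrpd$, which as an equivalence of categories preserves all colimits, produces $\rhogpdglobX=\operatorname{colim}\rho^\bigcirc(|X_i|)$.

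Finally, the functor $\mbox{Hom}_{\inftygrpd}(-,\rho^\bigcirc(G)[-1])$ is contravariant and carries colimits to limits, so
\[
\mbox{Hom}_{\inftygrpd}(\rhogpdglobX,\rho^\bigcirc(G)[-1])=\lim\mbox{Hom}_{\inftygrpd}(\rho^\bigcirc(|X_i|),\rho^\bigcirc(G)[-1])=\lim\mathcal{M}^\bigcirc(X_i,G).
\]
Chaining this equality with the identification supplied by Observation \ref{obs:Global} yields the asserted bijection, completing the argument.

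I expect the crux to be the van Kampen step together with the bookkeeping that makes the two limits literally the same diagram. The HHSvK theorem cannot be applied to the closed pieces $X_i$ directly, which forces the passage to retracting open neighborhoods $U_i$ and a verification that the retractions induce equivalences $\rho(|U_i\cap U_j|)\simeq\rho(|X_{ij}|)$ compatible with the face maps of the diagram. One must then confirm that the cone in the colimit presentation of $\rhogpdglobX$ dualizes, under $\mbox{Hom}$, to exactly the cone over which Observation \ref{obs:Global} forms its limit. The good-cover assumption (contractibility of all intersections) and the vertex-triviality of the $\psi_{ij}$ are precisely what align these two descriptions, so the remaining work is this diagram-matching bookkeeping rather than any further homotopy-theoretic input.
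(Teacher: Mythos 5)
Your proposal is correct and follows essentially the same route as the paper: both reduce to Observation \ref{obs:Global}, present $\rho^\bigcirc(|X|)$ as a colimit of the $\rho^\bigcirc(|X_i|)$ via the HHSvK theorem applied to retracting open neighborhoods $U_i$, transport this through $H$, and dualize with the contravariant Hom into $\rho^\bigcirc(G)[-1]$ (the paper organizes the colimit step as a binary pushout plus induction rather than one \v{C}ech-type colimit, but this is only bookkeeping). Your justification that $H$ preserves colimits because it is an equivalence is in fact cleaner than the paper's appeal to $H$ being a right adjoint.
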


\begin{proof}
    We prove that Hom$_{\inftygrpd}(\rhogpdglobX,\rho^\bigcirc(G)[-1])$ is the limit $lim\mathcal{M}^\bigcirc(X_i,G)$ making use of the HHSvK Theorem of \cite{BrownBook}. The theorem will follow from this and from Observation \ref{obs:Global}. To prove the claim above, we prove the following claim: Given a good cover $\{X_1,X_2\}$ of $X$, the diagram:
\begin{equation}
    Hom_{\inftygrpd}(\rhogpdglobX,\rho^\bigcirc(G)[-1])\to\mathcal{M}^\bigcirc(X_1,G)\times \mathcal{M}^\bigcirc(X_2,G)\rightrightarrows\mathcal{M}^\bigcirc(X_{12},G)
\end{equation}
    \noindent is a pullback diagram. The claim will follow inductively from this. If $\{X_1,X_2\}$ is a good cover of $X$, then the diagram 
   $$\begin{tikzpicture}[node distance=2cm, auto]
  \node (X1X2) {$X_{12}$};
  \node (X1) [below of=X1X2] {$X_1$};
  \node (X) [right of=X1] {$X$};
  \node (X2) [right of=X1X2] {$X_2$};
  \draw[->] (X1X2) to node {} (X2);
  \draw[->] (X1X2) to node [swap] {} (X1);
  \draw[->] (X1) to node [swap] {} (X);
  \draw[->] (X2) to node [swap] {} (X);
\end{tikzpicture}$$
    \noindent is a pushout diagram in $\Simp$. After making a choice of convenient category of spaces, the diagram:
   $$\begin{tikzpicture}[node distance=2cm, auto]
  \node (X1X2) {$|X_{12}|$};
  \node (X1) [below of=X1X2] {$|X_1|$};
  \node (X) [right of=X1] {$|X|$};
  \node (X2) [right of=X1X2] {$|X_2|$};
  \draw[->] (X1X2) to node {} (X2);
  \draw[->] (X1X2) to node [swap] {} (X1);
  \draw[->] (X1) to node [swap] {} (X);
  \draw[->] (X2) to node [swap] {} (X);
\end{tikzpicture}$$
    \noindent is also a pushout diagram. Choose neighborhoods $U_1,U_2$ of $X_1$ and $X_2$ in $X$ such that $U_1$ and $U_2$ retract to $X_1$ and $X_2$ in $X$ and $U_1\cap U_2$ retracts into $X_{12}$. The open cover $U_1,U_2$ of $X$ satisfies the conditions of \cite[Thm. 14.3.1]{BrownBook}, and thus the diagram:
\begin{equation}\label{eq:coequalizerUs}
\rho(|U_1\cap U_2|)\rightrightarrows \rho(|U_1|)\sqcup\rho(|U_2|)\to \rho(|X|) 
\end{equation}
is a pushout. The following diagram is thus also a pushout:
\begin{equation}\label{eq:coequalizerXs}
\rho(|X_{12}|)\rightrightarrows \rho(|X_1|)\sqcup\rho(|X_2|)\to \rho(|X|) 
\end{equation}
It is easily seen that the horizontalization functor $H:\omegagpd\to\inftygrpd$ is right adjoint. From this and from the diagram \ref{eq:coequalizerXs} we obtain a coequalizer diagram:
\begin{equation}
    \rho^\bigcirc(|X_{12}|)\rightrightarrows \rho^\bigcirc(|X_1|)\sqcup\rho^\bigcirc(|X_2|)\to \rho^\bigcirc(|X|) 
\end{equation}
Applying the presehaf $\mbox{Hom}_{\inftygrpd}(\_,\rho^\bigcirc(G)[-1])$ to the above diagram provides us with the pullback diagram
\[Hom_{\inftygrpd}(\rhogpdglobX,\rho^\bigcirc(G)[-1])\to\mathcal{M}^\bigcirc(X_1,G)\times \mathcal{M}^\bigcirc(X_2,G)\rightrightarrows\mathcal{M}^\bigcirc(Y,G)\]
The  concludes the proof of the claim. Doing induction on the argument above, we obtain that for any good cover $\{X_1,...,X_n\}$ of $X$, the equation $Hom_{\inftygrpd}(\rhogpdglobX,\rho^\bigcirc(G)[-1])=lim\mathcal{M}^\bigcirc(X_i,G)$. This, together with Observation \ref{obs:Global} concludes the proof of the theorem.

\end{proof}

\begin{obs}[Gauge transformations on global globular LHGFs]
    The previous theorem lets us think of global globular LHGFs as morphisms in $\mathcal{M}_{gl}^\bigcirc(X,G)$. Within this picture, a gauge transformation on global globular LHGFs is determined by a single function $u:X_0\to G_0$ defined on the discrete set $X_0$ instead of being determined by a collection of functions $u_i$ associated with the elements of the good cover. The  condition that the transition functions restricted to $X_0$ must evaluate to the constant function equal to $1$ synchronizes the local gauge transformations in the only points where they are relevant. 
\end{obs}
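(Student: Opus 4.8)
The plan is to reduce the statement to the single fact, recorded in Definition \ref{def:globulargaugetransf} and Observation \ref{obs:GlobularVertices}, that a gauge transformation between globular LHGFs over one simplicial set is nothing more than a function on vertices, and then to show that the system of transition functions, being trivial on $X_0$, forces the local vertex data to agree on overlaps. First I would fix the notion of a gauge transformation of a global globular LHGF $(A_i)$: it is a family $\{u_i\}$ with $u_i:A_i\rightsquigarrow B_i$ a local gauge transformation over $X_i$, subject to the compatibility that the action on transition functions described in Definition \ref{def:gaugetransfONTrFns} fixes the given system $\{\psi_{ij}\}$, so that the transformed tuple $(B_i)$ remains a global LHGF for the same pair $(\{X_i\},\{\psi_{ij}\})$. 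By Definition \ref{def:globulargaugetransf} each $u_i$ is already a function $u_i:(X_i)_0\to G_0$ on vertices, so the entire datum $\{u_i\}$ lives on vertices; this is the point that fails in the cubical setting.

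The synchronization step is the heart of the argument. Composing the elementary transformations of Definition \ref{def:gaugetransfONTrFns} over all indices, a family $\{u_i\}$ sends $\psi_{ij}$ to $u_i\odot\psi_{ij}\odot u_j^{-1}$. Requiring this to equal $\psi_{ij}$ on each overlap $X_{ij}$ gives $u_i\odot\psi_{ij}=\psi_{ij}\odot u_j$. I would then evaluate this identity on the vertices $(X_{ij})_0$ and invoke the standing hypothesis $\psi_{ij}|_{X_0}=\mathbf{1}$ of Observation \ref{1onVertices}, which collapses the conjugation to $u_i(x)=u_j(x)$ for every $x\in(X_{ij})_0$. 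Thus the local functions $u_i$ agree wherever two patches of the good cover meet, and no higher-dimensional cell is involved.

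Finally I would glue. Since $\{X_i\}$ is a cover with $X=\mathrm{colim}\,X_i$, the vertex sets satisfy $X_0=\mathrm{colim}\,(X_i)_0$, so a family $\{u_i:(X_i)_0\to G_0\}$ agreeing on overlaps assembles uniquely into a single function $u:X_0\to G_0$, and conversely any such $u$ restricts to a compatible family. Under the identification $\mathcal{M}_{gl}^\bigcirc(X,G)\cong\mbox{Hom}_{\inftygrpd}(\rho^\bigcirc(|X|),\rho^\bigcirc(G)[-1])$ of Theorem \ref{thm:localglobal}, this single $u$ is precisely the data of a gauge transformation of the glued morphism $A$ in the sense of Definition \ref{def:globulargaugetransf}, acting by the globular formula $u(s^n\langle a\rangle)+_0 A(\langle a\rangle)-_0 u(t^n\langle a\rangle)=B(\langle a\rangle)$. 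This yields the asserted bijection between gauge transformations of global globular LHGFs and functions $X_0\to G_0$.

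The main obstacle I anticipate is the synchronization step: one must verify carefully that the correct compatibility constraint is that $\{u_i\}$ \emph{fixes} the transition system, rather than allowing $\psi_{ij}$ to be carried to an equivalent cocycle, and that after restriction to the trivialized vertices this really forces pointwise equality and not merely conjugacy of the $u_i$. It is worth flagging the contrast with the cubical case here: there a local gauge transformation is a full morphism $u_i\in\sigma(X_i,G)$ depending on cubes of every dimension (Definition \ref{def:gaugetransf}), and the analogous gluing would demand agreement of these morphisms on all of $\rho(|X_{ij}|)$; it is exactly the globular reduction to vertex data that makes a single $u:X_0\to G_0$ suffice.
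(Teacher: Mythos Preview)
Your argument is correct, and in fact it supplies considerably more detail than the paper does: in the paper this is stated as an observation with no proof, the implicit justification being purely top-down --- once Theorem \ref{thm:localglobal} identifies $\mathcal{M}_{gl}^\bigcirc(X,G)$ with $\mbox{Hom}_{\inftygrpd}(\rho^\bigcirc(|X|),\rho^\bigcirc(G)[-1])$, Definition \ref{def:globulargaugetransf} applies verbatim to the glued morphism and immediately yields a single $u:X_0\to G_0$.

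Your route is bottom-up: you start from a family $\{u_i\}$ of local gauge transformations, impose the compatibility that the transition system be preserved, and show that the hypothesis $\psi_{ij}|_{X_0}=\mathbf{1}$ forces $u_i(x)=u_j(x)$ on overlaps, whence the family glues. This is a genuine alternative and arguably the more honest one, since it is precisely what is needed to justify the final sentence of the observation about ``synchronization.'' The paper's top-down reading tells you \emph{that} a single $u$ suffices; your argument explains \emph{why} the a priori richer local data $\{u_i\}$ collapses to it. The concern you flag --- whether the correct constraint is that $\{u_i\}$ fixes $\{\psi_{ij}\}$ rather than merely carrying it to an equivalent cocycle --- is well placed, but note that in the globular setting the transition data is effectively only its restriction to $X_0$ (Observation \ref{obs:GlobularGlobal}), so ``fixing $\{\psi_{ij}\}$'' and ``preserving the condition $\psi_{ij}|_{X_0}=\mathbf{1}$'' coincide, and either formulation yields $u_i(x)u_j(x)^{-1}=1$ on $(X_{ij})_0$.
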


\subsection{Coarse graining and a complex of global LHGFs}\label{ss:CoarseGrGlobalLHGF}

\begin{obs}\label{obs:ComplexOfGlobalLHGF}
In light of Theorem \ref{thm:localglobal}, and by Proposition \ref{prop:EnrichedInternalGroupoids}, the set of 
global globular LHGFs on $X$ with gauge internal group $K = \rho (G_\ast)$ naturally inherits the structure of an $\infty$-groupoid $[\rho^\bigcirc(|X|),\rho (G_\ast)[-1]]$ as the globular version of what is 
explicitly written in Observation \ref{obs:ClosedLHGF}. 
\noindent Considering the geometric realization $|[\rho^\bigcirc(|X|),\rho (G_\ast)[-1]]|$ of $[\rho^\bigcirc(|X|),\rho (G_\ast)[-1]]$ we obtain a convenient topological space. We have arguments suggesting that the homotopy type of this space could describe the homotopy type of the space of $G$ gauge fields over $|X|$. Recall that the set of connected componnents of the space of $G$-gauge fields over $|X|$ is in one to one correspondence with the set of equivalence classes of $G$-bundles over $|X|$, and that once bundle is selected the space of gauge fields on that bundle is an affine space. 

The objective of classifying $G$-bundles over $|X|$ by means of the algebraic study of the homotopy of $[\rho^\bigcirc(|X|),\rho (G_\ast)[-1]]$ is not finished. Completing it is one of our current research interests; this will require us to use the homotopy theory of diffeological spaces \cite{ChristensenWu} and a categorical view gauge fileds as presented in \cite{WaldorfSchreiber}. 
\end{obs}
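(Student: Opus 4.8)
The plan is to reduce the statement to a limit computation and then extract that limit from a van Kampen--type pushout. By Observation~\ref{obs:Global}, once the transition functions are trivial on vertices the set $\mathcal{M}_{gl}^\bigcirc(X,G)$ is already identified with the limit $\lim \mathcal{M}^\bigcirc(X_i,G)$ of the diagram of local globular LHGFs indexed by the good cover. The genuine content of the theorem is therefore the identification
\[
\mbox{Hom}_{\inftygrpd}(\rhogpdglobX,\rho^\bigcirc(G)[-1]) \cong \lim \mathcal{M}^\bigcirc(X_i,G),
\]
recalling that $\mathcal{M}^\bigcirc(X_i,G)=\mbox{Hom}_{\inftygrpd}(\rho^\bigcirc(|X_i|),\rho^\bigcirc(G)[-1])$. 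I would prove this by showing that $\rhogpdglobX$ is the colimit of the $\rho^\bigcirc(|X_i|)$ over the nerve of the cover, and then dualizing via the contravariant functor $\mbox{Hom}_{\inftygrpd}(-,\rho^\bigcirc(G)[-1])$, which converts colimits into limits.

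First I would treat the base case of a two-element good cover $\{X_1,X_2\}$, from which the general case follows by induction on $n$. A good cover presents $X$ as a pushout of $X_1 \leftarrow X_{12} \to X_2$ in $\Simp$; since geometric realization is cocontinuous on a convenient category of spaces, this descends to a pushout of the realizations. To bring the Higher Homotopy Seifert--van Kampen machinery of \cite[Thm.~14.3.1]{BrownBook} to bear I would thicken the subcomplexes to an open cover $\{U_1,U_2\}$ of $|X|$ with $U_i$ deformation retracting onto $|X_i|$ and $U_1\cap U_2$ retracting onto $|X_{12}|$; the good-cover hypothesis guarantees the connectivity conditions demanded by HHSvK, so that
\[
\rho(|U_1\cap U_2|)\rightrightarrows \rho(|U_1|)\sqcup\rho(|U_2|)\to \rho(|X|)
\]
is a pushout in $\omegagpd$, and hence so is the diagram with the $U$'s replaced by the $X$'s.

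Next I would transport this pushout along the equivalence $H\colon\omegagpd\to\inftygrpd$ of \cite{BrownGlobular}. Being an equivalence, $H$ preserves colimits, so it yields a pushout
\[
\rho^\bigcirc(|X_{12}|)\rightrightarrows \rho^\bigcirc(|X_1|)\sqcup\rho^\bigcirc(|X_2|)\to \rhogpdglobX
\]
in $\inftygrpd$. Applying $\mbox{Hom}_{\inftygrpd}(-,\rho^\bigcirc(G)[-1])$ turns this into the pullback diagram
\[
\mbox{Hom}_{\inftygrpd}(\rhogpdglobX,\rho^\bigcirc(G)[-1])\to\mathcal{M}^\bigcirc(X_1,G)\times\mathcal{M}^\bigcirc(X_2,G)\rightrightarrows\mathcal{M}^\bigcirc(X_{12},G),
\]
exhibiting the Hom-set as the desired limit. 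Iterating over the cover and combining with Observation~\ref{obs:Global} then gives the isomorphism in the statement.

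I expect the main obstacle to be the verification that the hypotheses of HHSvK are met --- in particular the filtered connectivity conditions of \cite[Thm.~14.3.1]{BrownBook} --- together with the accompanying thickening step, which must be carried out so that the open cover $\{U_1,U_2\}$ genuinely reproduces the homotopy $\omega$-groupoids of the $X_i$ rel vertices. The good-cover assumption (contractibility of all intersections) is exactly what should discharge these connectivity requirements, but making the retractions compatible with the skeletal filtrations, and checking that passing to the quotient $\rho$ commutes with the pushout, is the delicate point. A secondary subtlety is confirming that $H$ may be applied on the nose to the van Kampen pushout rather than merely to a homotopy pushout; this is justified because $H$ is an honest equivalence of categories and the HHSvK theorem produces a strict colimit of $\omega$-groupoids.
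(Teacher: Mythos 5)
Your argument is, almost line for line, the paper's proof of Theorem \ref{thm:localglobal} (pushout in $\Simp$, cocontinuity of geometric realization, thickening to an open cover satisfying the hypotheses of \cite[Thm.~14.3.1]{BrownBook}, transporting the resulting pushout along $H$, and dualizing with $\mbox{Hom}_{\inftygrpd}(-,\rho^\bigcirc(G)[-1])$). That part is fine as far as it goes, and your worries about the thickening and the connectivity hypotheses are the right ones. The problem is that Theorem \ref{thm:localglobal} is an \emph{input} to the statement you were asked to prove, not its content. Observation \ref{obs:ComplexOfGlobalLHGF} asserts that the set of global globular LHGFs carries a natural \emph{$\infty$-groupoid structure} $[\rho^\bigcirc(|X|),\rho(G_\ast)[-1]]$. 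Your limit computation only identifies the underlying \emph{set} of that object, i.e.\ its $0$-cells; it produces no higher cells, no face or degeneracy maps, and no compositions. The missing ingredient is the globular analogue of Proposition \ref{prop:EnrichedInternalGroupoids} and Observation \ref{obs:ClosedLHGF}: the $m$-cells of the internal hom are morphisms out of the $m$-fold path object, $[K,L]_m=\Homset(P^mK,L)$ (in the cubical picture, $[P^1\rho(|X|),BK]_m=\Homset_{\Gpd_{\omegagpd}}(P^{m+1}\rho(|X|),BK)$), and the $\infty$-groupoid operations are induced from the leftover cubical/globular directions of $P^mK$. A complete argument must say what the globular shift of this construction is and check that the structure maps restrict correctly; your proposal never mentions the path objects at all.

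For the record, the remaining assertions in the observation (that the homotopy type of the geometric realization $|[\rho^\bigcirc(|X|),\rho(G_\ast)[-1]]|$ recovers the homotopy type of the space of $G$-gauge fields over $|X|$, and the bundle-classification program) are explicitly flagged in the paper as conjectural and unfinished, so you carry no proof obligation there; but the enrichment step is stated as established and is exactly the part your write-up omits.
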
 

\begin{obs}\label{obs:coarsgrainingGlobalLHGFs}
This observation follows closely Observation \ref{obs:coarsgraining}. 
Given $X,Y\in\Simp$. If $Y$ refines $X$, then $\rho(|X|)\subseteq \rho(|Y|)$. This inclusion defines a morphism $j:\rho^\bigcirc(|X|)\to \rho^\bigcirc(|Y|)$ in $\Gpd_{\infty\Gpd}$. The pullback $j^\ast$ of $j$, defines a function $\mathcal{M}_{gl}^\bigcirc(Y,G) \to \mathcal{M}_{gl}^\bigcirc(X,G)$, expressing the coarse graining of global globular LHGFs on $Y$ to global globular LHGFs on $X$. 
\end{obs}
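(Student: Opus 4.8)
The plan is to transport the cubical coarse-graining map of Observation~\ref{obs:coarsgraining} across the equivalence $H:\omegagpd\to\inftygrpd$ and then read off the induced map on Hom-sets using the identification supplied by Theorem~\ref{thm:localglobal}. Nothing genuinely new happens beyond Observation~\ref{obs:coarsgraining}; the work is in assembling the pieces.

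First I would record the geometric input. If $Y$ refines $X$ then $|X|$ and $|Y|$ have the same underlying space, and every simplex of $X$ of dimension $\le m$ is subdivided into simplices of $Y$ of dimension $\le m$; hence the skeletal filtrations satisfy $|X|_m\subseteq|Y|_m$ for all $m$. Consequently every filtered singular cube on $|X|$ is also a filtered singular cube on $|Y|$, and every thin homotopy rel vertices in $|X|$ is one in $|Y|$, so the assignment descends to the inclusion $\rho(|X|)\subseteq\rho(|Y|)$ of $\omega$-groupoids already invoked in Observation~\ref{obs:coarsgraining}. Applying the equivalence $H$ to this inclusion produces the morphism $j:\rho^\bigcirc(|X|)\to\rho^\bigcirc(|Y|)$.

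Next I would precompose with $j$. Contravariant functoriality of $\mbox{Hom}_{\inftygrpd}(\_,\rho^\bigcirc(G)[-1])$ turns $j$ into a function
\[
j^\ast:\mbox{Hom}_{\inftygrpd}(\rho^\bigcirc(|Y|),\rho^\bigcirc(G)[-1])\to\mbox{Hom}_{\inftygrpd}(\rho^\bigcirc(|X|),\rho^\bigcirc(G)[-1]).
\]
By Theorem~\ref{thm:localglobal} the source and target are canonically identified with $\mathcal{M}_{gl}^\bigcirc(Y,G)$ and $\mathcal{M}_{gl}^\bigcirc(X,G)$, so $j^\ast$ is precisely the claimed coarse-graining function.

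The step needing most care is the compatibility of the good covers demanded by Theorem~\ref{thm:localglobal}. One must check that a good cover $\{X_1,\dots,X_n\}$ of $X$ with vertex-trivial transition functions refines to a good cover $\{Y_1,\dots,Y_n\}$ of $Y$ with the same nerve and transition functions again trivial on vertices, so that the two invocations of Theorem~\ref{thm:localglobal} are coherent. Because $X_0\subseteq Y_0$ and restriction along $\rho(|X|)\subseteq\rho(|Y|)$ preserves the compatibility condition (\ref{eq:compatibilityglobal}) verbatim, $j^\ast$ carries compatible collections to compatible collections; all remaining assertions are immediate from functoriality of $H$ and of the Hom-presheaf, exactly paralleling Observation~\ref{obs:coarsgraining}.
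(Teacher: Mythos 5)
Your proposal is correct and follows the same route the paper intends: the skeletal-filtration inclusion $\rho(|X|)\subseteq\rho(|Y|)$ gives $j$ via the equivalence $H$, and precomposition with $j$ under the identification of Theorem~\ref{thm:localglobal} is exactly the coarse-graining map. Your extra check that the good covers and vertex-trivial transition functions are compatible under refinement is a reasonable precaution, but it is subsumed by the theorem's cover-independent description of $\mathcal{M}_{gl}^\bigcirc(\cdot,G)$ as a Hom-set.
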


\section{Global globular LHGF and extended lattice gauge fields}\label{s:ELGF}
We remind the reader that in this paper the convention for composition of paths, and globes of paths, follows the natural writing order (from left to right) instead of convention traditionally used in differential geometry and physics. As a result, in our presentation of Extended Lattice Gauge Fields (ELGFs) given below, we follow the opposite composition convention as compared to the original definitions in \cite{MenesesZapata1,MenesesZapata2}. 

We relate LHGFs as defined in Section \ref{s:LHGF} to ELGFs as presented in \cite{MenesesZapata1,MenesesZapata2}. 
We prove that every global globular LHGF restricts to an ELGF. We do this by recasting the main components involved in the definition of ELGF in terms of the machinery developed in the previous sections. 
A combination of our results and the results in \cite{MenesesZapata1,MenesesZapata2} will allow us to associate a principal bundle to every global globular LHGF which is unique up to bundle equivalence.

\subsection{Path simplexes}\label{ss:SimplexesofPaths}
\noindent As in the rest of this paper, $X$ stands for a simplicial set. The definitions given below extends the notion of ELGF to any simplicial set; in its original version, they were introduced for the case in which $X$ is the barycentric subdivision of the simplicial complex associated to the triangulation of a manifold $M$, see \cite{MenesesZapata2}. Consider $M$ with a good (closed) cover associated to the simplices in $X$. Let $G$ be a Lie group with the trivial filtration. 
An ELGF determines a lattice $G$-gauge field; i.e. a morphism $A^L: P^1 \rho (|X|_1) \to BG$, and complements it with homotopy data: 
Certain homotopy globes of paths, called $k$-simplexes of paths in \cite{MenesesZapata2}, are considered; an ELGF assigns to them $k$-homotopy globes in $G$. 
As a result of this extension, apart from $A^L$, an ELGF determines a principal $G$ bundle over $M$ up to bundle equivalence. 

We recast the definition of path simplex as singular cubes on $|X|$.

\begin{notation}\label{not:Sigma}
Let $m\geq 1$. We will denote by $\Sigma_m$ the map $[0,1]^m\to\Delta_m$ defined by the formula
\begin{equation}
\Sigma_m(a_1,...,a_m)=(a_1,...,\Pi_{i=1}^ma_i)
\end{equation}
for every $(a_1,...,a_m)\in [0,1]^m$. Geometrically:
\begin{center}

\tikzset{every picture/.style={line width=0.75pt}} 

\begin{tikzpicture}[x=0.75pt,y=0.75pt,yscale=-1,xscale=1]

\draw   (200.73,80.3) -- (260.9,80.3) -- (260.9,140.47) -- (200.73,140.47) -- cycle ;
\draw    (390.13,140.3) -- (330.47,139.63) ;
\draw    (390.13,140.3) -- (390.47,80.97) ;
\draw    (330.47,139.63) -- (390.47,80.97) ;
\draw    (279.73,110.4) -- (328.13,110.94) ;
\draw [shift={(330.13,110.97)}, rotate = 180.64] [color={rgb, 255:red, 0; green, 0; blue, 0 }  ][line width=0.75]    (6.56,-1.97) .. controls (4.17,-0.84) and (1.99,-0.18) .. (0,0) .. controls (1.99,0.18) and (4.17,0.84) .. (6.56,1.97)   ;

\draw (297.07,98.98) node [anchor=north west][inner sep=0.75pt]  [font=\tiny]  {$\Sigma _{2}$};
\draw (189.07,146.2) node [anchor=north west][inner sep=0.75pt]  [font=\tiny]  {$( 0,0)$};
\draw (323.73,145.87) node [anchor=north west][inner sep=0.75pt]  [font=\tiny]  {$2$};
\draw (251.2,145.05) node [anchor=north west][inner sep=0.75pt]  [font=\tiny]  {$( 1,0)$};
\draw (251.2,71.05) node [anchor=north west][inner sep=0.75pt]  [font=\tiny]  {$( 1,1)$};
\draw (189.2,71.05) node [anchor=north west][inner sep=0.75pt]  [font=\tiny]  {$( 0,1)$};
\draw (389.73,146.05) node [anchor=north west][inner sep=0.75pt]  [font=\tiny]  {$1$};
\draw (389.73,71.05) node [anchor=north west][inner sep=0.75pt]  [font=\tiny]  {$0$};

\end{tikzpicture}
\end{center}
when $m=2$. We orient $\Delta_m$ in such a way that $\Sigma_m(0,...,0)$ is the vertex $m$ in $\Delta_m$ for every $m$ as in the diagram above. The numbering on the rest of the vertices of $\Delta_m$ is done inductively following the above convention. Thus defined $\Sigma_m$ is a singular $m$-cube in $\Delta_m$. 
\end{notation}

\begin{obs}\label{obs:Sigmapartial}
Let $m\geq 1$. The singular cube $\Sigma_m$ in $\Delta_m$ is such that $\partial_0^0\Sigma_m$ is contained in the largest vertex $m$ of $\Delta_m$ and $\partial_0^1\Sigma_m$ is the face opposite to $m$
\end{obs}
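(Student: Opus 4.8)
The plan is to verify both assertions by a direct evaluation of the formula defining $\Sigma_m$, working in the coordinate model of $\Delta_m$ implicit in Notation \ref{not:Sigma}: a point of $\Delta_m$ is recorded as a tuple $(t_1,\ldots,t_m)$ with $1\geq t_1\geq\cdots\geq t_m\geq 0$, and $\Sigma_m(a_1,\ldots,a_m)$ has $j$-th coordinate $t_j=\prod_{i=1}^{j}a_i$. One first checks that this lands in $\Delta_m$, since the partial products of numbers in $[0,1]$ are decreasing, and that $\Sigma_m$ is a surjective parametrization. The only other convention I would fix is that $\partial_0^\alpha$ restricts the first cube-coordinate to $\alpha$, so that $\partial_0^\alpha\Sigma_m$ is the singular $(m-1)$-cube $(a_2,\ldots,a_m)\mapsto\Sigma_m(\alpha,a_2,\ldots,a_m)$.

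For the first assertion I would put $a_1=0$: every coordinate of $\Sigma_m(0,a_2,\ldots,a_m)$ is a partial product containing the factor $a_1=0$, hence vanishes, so $\partial_0^0\Sigma_m$ is the constant cube at the origin $(0,\ldots,0)$. By the orientation convention of Notation \ref{not:Sigma}, the origin equals $\Sigma_m(0,\ldots,0)$, which is by definition the vertex $m$; thus $\partial_0^0\Sigma_m$ is contained in the vertex $m$. For the second assertion I would put $a_1=1$: then the first coordinate of $\Sigma_m(1,a_2,\ldots,a_m)$ is $1$ while the remaining coordinates are $t_j=\prod_{i=2}^{j}a_i$, so $\partial_0^1\Sigma_m$ lands in the facet $\{t_1=1\}$. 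To see that this facet is the one opposite $m$, I would list the vertices of $\Delta_m$---the origin together with $(1^k,0^{m-k})$ for $1\leq k\leq m$---and note that the origin is the unique vertex with $t_1\neq 1$; since the origin is vertex $m$, the facet $\{t_1=1\}$ is exactly the face opposite $m$.

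To upgrade ``lands in'' to the stronger reading that $\partial_0^1\Sigma_m$ \emph{is} the opposite facet, I would finally observe that deleting the constant first coordinate identifies $\partial_0^1\Sigma_m$ with $\Sigma_{m-1}$ under the canonical isomorphism $\{t_1=1\}\cong\Delta_{m-1}$; this simultaneously confirms the inductive vertex-numbering convention invoked in Notation \ref{not:Sigma}. The argument is entirely computational, so the only real care required is bookkeeping of conventions---fixing the coordinate model of $\Delta_m$ compatible with the formula, reading $\partial_0^\alpha$ as restriction of the first cube-coordinate, and using the orientation that names the origin vertex $m$. I expect no substantive obstacle beyond keeping these three conventions consistent with one another.
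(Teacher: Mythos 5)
Your computation is correct, and it is essentially the only argument available: the paper states this Observation without proof, taking it to follow directly from the formula for $\Sigma_m$ and the orientation convention of Notation \ref{not:Sigma}, which is exactly what you verify. Setting $a_1=0$ kills all partial products (giving the vertex $m$) and setting $a_1=1$ lands in the facet $\{t_1=1\}$ opposite that vertex, with the identification of $\partial_0^1\Sigma_m$ with $\Sigma_{m-1}$ confirming the inductive numbering, so nothing is missing.
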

\noindent In the following definition, and in the rest of this section we denote simplexes in a simplicial set $X$ by the letters $c_\tau,c_\nu$, etc. and we write every simplex of the form $c_\tau$ as being generated by its ordered vertex set. We do this so as to agree with the notational conventions used in \cite{MenesesZapata2}.
\begin{notation}\label{not:Gamma}
Let $X\in\Simp$. Let $c_\tau$ be an $m$-simplex in $X$. Write $c_\tau$ as $[v_0,...,v_m]$ where $\left\{v_0,...,v_m\right\}$ is the ordered vertex set of $c_\tau$. Let $\tilde{c_\tau}:\Delta_m\to X$ be the affine map such that $\tilde{c_\tau}(i)=v_i$ for every $i$. Let $\tilde{\Gamma}_{\tau}:[0,1]^m\to |X|$ denote the composition $\tilde{c_\tau}\Sigma_m$. Thus defined $\tilde{\Gamma}_{\tau}$ is a singular cube on $X$.
\end{notation}
\begin{notation}\label{not:Gammatilde}
    Let $X\in\Simp$ and $m\geq 0$. Let $c_\tau\in X^m$ and $c_\nu\in X^n$ be such that $c_\nu\subseteq c_\tau$. Write $\tau=[v_0,...,v_m]$ as in \ref{not:Gamma} and assume that $c_\nu$ is contained in the face opposite to $v_m$. Write $c_\nu=[v_{i_0},...,v_{i_n}]$ where $\left\{i_0,...,i_m\right\}\subseteq\left\{0,...,m\right\}$ is the vertex set of $c_\nu$. Let $\tilde{c}_{\nu\tau}$ be the affine map $\Delta_{n+1}\to X$ such that $\tilde{c}_{\nu\tau}(j)=i_j$ for every $j\leq n-1$ and $\tilde{c}_{\nu\tau}(n)=v_m$. Denote by $\tilde{\Gamma}_{\nu\tau}$ the composition $\tilde{c}_{\nu\tau}\Sigma_m$. Thus defined $\tilde{\Gamma}_{\nu\tau}$ is a singular $n+1$-cube on $X$ with image the subsimplex of $c_\tau$ generated by $c_\nu$ and the maximal vertex $v_m$ of $c_\tau$. 
\end{notation}
\begin{notation}
Let $X\in\Simp$ and $m\geq 0$. Let $c_\tau\in X^m$ and $c_\nu\in X^n$ be as in \ref{not:Gammatilde}. We will represent the homotopy cube $\tilde{\Gamma}_{\nu\tau}$ pictorially as:
\begin{center}

\tikzset{every picture/.style={line width=0.75pt}} 

\begin{tikzpicture}[x=0.75pt,y=0.75pt,yscale=-1,xscale=1]

\draw   (220.73,100.3) -- (280.9,100.3) -- (280.9,160.47) -- (220.73,160.47) -- cycle ;

\draw (243.92,124.91) node [anchor=north west][inner sep=0.75pt]  [font=\scriptsize]  {$\tilde{\Gamma}_{\nu \tau}$};
\draw (199.07,125.2) node [anchor=north west][inner sep=0.75pt]  [font=\scriptsize]  {$v_{m}$};
\draw (286.64,121.91) node [anchor=north west][inner sep=0.75pt]  [font=\scriptsize]  {$\tilde{\Gamma }_{\nu }$};

\end{tikzpicture}
\end{center}
By Observation \ref{obs:Sigmapartial}, if $v_m$ is the largest vertex of $c_\tau$ then the cube $\tilde{\Gamma}_{\nu\tau}$ satisfies the equations $\partial_0^0\tilde{\Gamma}_{\nu\tau}=v_m$ and $\partial_0^1\tilde{\Gamma}_{\nu\tau}=\tilde{\Gamma}_v$, thus the notation on the left and right edges of the above diagram. We write $\Gamma_{\nu\tau}$ for the composite cube $\tilde{\Gamma}_{\nu\tau}+_0\Gamma^0_{\tilde{\Gamma}_\nu}$, where $\Gamma^0_{\tilde{\Gamma}_\nu}$ is the connection cube on $\tilde{\Gamma}_\nu$. Pictorially $\Gamma_{\nu\tau}$ is the composite:
\begin{center}

\tikzset{every picture/.style={line width=0.75pt}} 

\begin{tikzpicture}[x=0.75pt,y=0.75pt,yscale=-1,xscale=1]

\draw   (240.73,120.3) -- (300.9,120.3) -- (300.9,180.47) -- (240.73,180.47) -- cycle ;
\draw   (300.73,120.3) -- (360.9,120.3) -- (360.9,180.47) -- (300.73,180.47) -- cycle ;
\draw    (310.22,130.02) -- (350.44,130.02) ;
\draw    (350.44,130.02) -- (350.67,170.13) ;

\draw (263.92,145.42) node [anchor=north west][inner sep=0.75pt]  [font=\scriptsize]  {$\tilde{\Gamma}_{\nu \tau} $};
\draw (322.92,145.42) node [anchor=north west][inner sep=0.75pt]  [font=\scriptsize]  {$\Gamma^0$};

\end{tikzpicture}
\end{center}
\end{notation}
\begin{obs}\label{obs:GammasGlob}
 Let $X\in\Simp$ and $m\geq 0$. Let $c_\tau\in X^m$ and $c_\nu\in X^n$ be as in \ref{not:Gammatilde}. The cube $\Gamma_{\nu\tau}$ is globular.   
\end{obs}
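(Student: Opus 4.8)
The plan is to verify the defining condition of a globular cube directly, by computing all the faces $\partial_i^\alpha\Gamma_{\nu\tau}$ and checking that, away from a single principal direction, each lies in the image of a degeneracy (or is algebraically thin in the sense of Subsection \ref{ss: Rank}). Since $\Gamma_{\nu\tau}=\tilde{\Gamma}_{\nu\tau}+_0\Gamma^0_{\tilde{\Gamma}_\nu}$ is a composite in direction $0$, the cubical relations split the faces in a convenient way: $\partial_0^0\Gamma_{\nu\tau}=\partial_0^0\tilde{\Gamma}_{\nu\tau}$, $\partial_0^1\Gamma_{\nu\tau}=\partial_0^1\Gamma^0_{\tilde{\Gamma}_\nu}$, while $\partial_i^\alpha\Gamma_{\nu\tau}=\partial_i^\alpha\tilde{\Gamma}_{\nu\tau}+_0\partial_i^\alpha\Gamma^0_{\tilde{\Gamma}_\nu}$ for $i\geq 1$. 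So the whole statement reduces to understanding the boundaries of the two constituent cubes.

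First I would analyze the faces of $\tilde{\Gamma}_{\nu\tau}=\tilde{c}_{\nu\tau}\Sigma_{n+1}$ using the explicit formula for $\Sigma_{n+1}$ from Notation \ref{not:Sigma}. The crucial structural feature is multiplicativity: the $k$-th coordinate of $\Sigma_{n+1}$ is $\prod_{i\leq k}a_i$, so setting any coordinate $a_j$ equal to $0$ annihilates every later coordinate. Consequently the face obtained by this substitution depends only on $a_1,\dots,a_{j-1}$ and is therefore degenerate in all the remaining directions whenever $j-1\leq n-1$. Combined with Observation \ref{obs:Sigmapartial}, which already identifies $\partial_0^0\tilde{\Gamma}_{\nu\tau}$ with the constant cube at the apex $v_m$ and $\partial_0^1\tilde{\Gamma}_{\nu\tau}$ with $\tilde{\Gamma}_\nu$, this pins down the entire lower boundary of $\tilde{\Gamma}_{\nu\tau}$ and shows that the only genuinely non-thin faces come from the $a_j=1$ substitutions and the top direction.

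Next I would record the faces of the connection cube $\Gamma^0_{\tilde{\Gamma}_\nu}$ from its defining $\max$-formula (the map $G_i$ of \eqref{Gi}): its two faces in the connection directions recover $\tilde{\Gamma}_\nu$ and a degeneracy of $\partial_0^1\tilde{\Gamma}_\nu$, and the higher faces are connections of faces of $\tilde{\Gamma}_\nu$. The point of inserting $\Gamma^0_{\tilde{\Gamma}_\nu}$ is precisely to cap off the one unavoidable non-thin face $\partial_0^1\tilde{\Gamma}_{\nu\tau}=\tilde{\Gamma}_\nu$: after composing, $\partial_0^1\Gamma_{\nu\tau}=\partial_0^1\Gamma^0_{\tilde{\Gamma}_\nu}$ becomes a degeneracy, so that the full $\partial_0$-boundary collapses. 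Feeding these computations into the composite formulas above then exhibits each face transverse to the principal direction as a composite of thin cubes, hence thin, which is the globular condition.

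The main obstacle is the intermediate transverse faces, i.e. $\partial_i^\alpha\Gamma_{\nu\tau}$ for $1\leq i\leq n-1$: here neither constituent face is a single point, and one must show that the partially collapsed face of $\tilde{\Gamma}_{\nu\tau}$ composes with the corresponding connection face to an algebraically thin cube. This is exactly where the distinction between degeneracy and algebraic thinness in $\rho(|X|)$ matters, and where the folding and cancellation identities for connections in \cite{BrownBook} have to be used rather than a naive coordinate computation; it is the step I would write out in full detail. As a sanity check I would first record the case $n=1$, where there are no intermediate directions at all: there $\Gamma_{\nu\tau}$ is a square with both direction-$0$ faces constant (at $v_m$ and at the terminal vertex of $\tilde{\Gamma}_\nu$) and with its two direction-$1$ faces parallel paths, which is manifestly globular and already displays the mechanism by which the connection removes the obstruction.
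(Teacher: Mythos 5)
The paper records Observation \ref{obs:GammasGlob} without any proof, so there is no argument of the authors to compare yours with; I can only judge the proposal on its own terms. Your bookkeeping for the faces of the $+_0$-composite is correct, your treatment of the two direction-$0$ faces is correct, and the $n=1$ check is right --- but $n=1$ is exactly the case with no transverse directions, so it cannot detect the failure of the step you defer, and that step does fail. Gluing a single connection cube in direction $0$ normalizes only the two faces $\partial_0^\alpha$; it cannot repair the faces $\partial_i^\alpha$ for $1\le i\le n$, and those are genuinely non-thin. Concretely, for $n=2$, writing $c_\nu=[v_{i_0},v_{i_1},v_{i_2}]$, the connection relation $\partial_1^0\Gamma_0x=x$ gives
\[
\partial_1^0\Gamma_{\nu\tau}\;=\;\partial_1^0\tilde{\Gamma}_{\nu\tau}\,+_0\,\partial_1^0\Gamma^0_{\tilde{\Gamma}_\nu}\;=\;\epsilon\bigl(\mbox{edge } v_m\to v_{i_2}\bigr)\,+_0\,\tilde{\Gamma}_\nu ,
\]
and $\tilde{\Gamma}_\nu$ covers the $2$-simplex $c_\nu$ once, so this face's relative homotopy class is the free generator of $\pi_2(|X|_2,|X|_1)$ corresponding to the cell $c_\nu$: it is not a degeneracy, not algebraically thin, and cannot be made so by passing to another representative of its class in $\rho(|X|)$. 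Likewise each face $\partial_j^1\tilde{\Gamma}_{\nu\tau}$, obtained by setting $a_j=1$ (which identifies two consecutive coordinates of $\Sigma_{n+1}$), is a nondegenerate $n$-cube covering an $n$-dimensional face of the join $[v_{i_0},\dots,v_{i_n},v_m]$, and the matching face of $\Gamma^0_{\tilde{\Gamma}_\nu}$ does not cancel it. So the sentence ``each face transverse to the principal direction is a composite of thin cubes'' is not a detail to be filled in with folding identities; it is false for the cube as constructed, for every $n\ge 2$.

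To actually produce a globular cube one has to fold in every transverse direction --- compose with connections (and their inverses) in directions $1,\dots,n$ as well, which is the folding operation used in \cite{BrownGlobular} to pass from cubes to globes; the single $\Gamma^0$ glued in direction $0$ is only the first step of that process. Your proof, as written, establishes exactly what that one step buys (both $\partial_0$-faces degenerate, plus the automatic degeneracies of the $\partial_j^0$-faces of $\tilde{\Gamma}_{\nu\tau}$ coming from the multiplicativity of $\Sigma_{n+1}$), and no more. Either carry out the full folding and prove globularity of the folded cube, or flag explicitly that the statement requires the construction of $\Gamma_{\nu\tau}$ to be amended before it can hold for $n\ge 2$.
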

\begin{obs}[Path simplices]\label{obs:Ls}
Let $X\in\Simp$ and $m\geq 0$. Let $c_\tau\in X^m$ and $c_\nu\in X^n$ be as in \ref{not:Gammatilde}. Let $a\in [0,1]^{n-1}$. Write $L_a(t)$ for the line $t\mapsto (t,a)$, $t\in [0,1]$ in $[0,1]^n$. The composite $\Gamma_{\nu\tau}L_a(t)$ is a path starting at $v_m$ and ending at the largest vertex $v_{i_n}$ of $c_\nu$. If $i\in\left\{0,1\right\}$ and $b\in [0,1]^{n-2}$ we will write $L_b^i(t)$ for the line $t\mapsto (t,b,i)$, $t\in [0,1]$. The composites $\Gamma_{\nu\tau}L_b^i$ are paths starting at $v_m$ and ending at $v_{i_n}$ respectively. Pictorially:

\begin{center}

\tikzset{every picture/.style={line width=0.75pt}} 

\begin{tikzpicture}[x=0.75pt,y=0.75pt,yscale=-1,xscale=1]

\draw [color={rgb, 255:red, 255; green, 0; blue, 0 }  ,draw opacity=1 ]   (370.27,80.04) -- (310.47,119.63) ;
\draw   (180.73,60.3) -- (240.9,60.3) -- (240.9,120.47) -- (180.73,120.47) -- cycle ;
\draw    (370.13,120.3) -- (310.47,119.63) ;
\draw    (370.13,120.3) -- (370.47,60.97) ;
\draw    (310.47,119.63) -- (370.47,60.97) ;
\draw    (259.73,90.4) -- (308.13,90.94) ;
\draw [shift={(310.13,90.97)}, rotate = 180.64] [color={rgb, 255:red, 0; green, 0; blue, 0 }  ][line width=0.75]    (6.56,-1.97) .. controls (4.17,-0.84) and (1.99,-0.18) .. (0,0) .. controls (1.99,0.18) and (4.17,0.84) .. (6.56,1.97)   ;
\draw [color={rgb, 255:red, 255; green, 0; blue, 0 }  ,draw opacity=1 ]   (240.04,80.27) -- (180.29,79.99) ;
\draw [color={rgb, 255:red, 255; green, 0; blue, 0 }  ,draw opacity=1 ]   (370.27,80.04) -- (370.13,120.3) ;

\draw (277.07,78.98) node [anchor=north west][inner sep=0.75pt]  [font=\tiny]  {$\Sigma _{2}$};
\draw (242.62,77.42) node [anchor=north west][inner sep=0.75pt]  [font=\tiny,color={rgb, 255:red, 255; green, 0; blue, 0 }  ,opacity=1 ]  {$a$};
\draw (206.84,85.87) node [anchor=north west][inner sep=0.75pt]  [font=\tiny,color={rgb, 255:red, 255; green, 0; blue, 0 }  ,opacity=1 ]  {$L_{a}$};
\draw (342.66,103.3) node [anchor=north west][inner sep=0.75pt]  [font=\tiny,color={rgb, 255:red, 255; green, 0; blue, 0 }  ,opacity=1 ]  {$\Sigma _{m} L_{a}$};
\draw (297.07,120.98) node [anchor=north west][inner sep=0.75pt]  [font=\tiny]  {$v_{m}$};
\draw (373.07,120.98) node [anchor=north west][inner sep=0.75pt]  [font=\tiny]  {$v_{i_{n}}$};
\draw (374.5,68.98) node [anchor=north west][inner sep=0.75pt]  [font=\tiny]  {$c_{\nu }$};

\end{tikzpicture}
\end{center}
\end{obs}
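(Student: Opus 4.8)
The plan is to read Observation~\ref{obs:Ls} as a pair of endpoint computations for the direction-$0$ lines of the singular cube $\Gamma_{\nu\tau}$, and to settle them by computing the two faces $\partial_0^0\Gamma_{\nu\tau}$ and $\partial_0^1\Gamma_{\nu\tau}$. First I would observe that each $L_a(t)=(t,a)$ is by construction a line in the $0$-direction: its initial point $L_a(0)=(0,a)$ lies in the face $\partial_0^0$ and its terminal point $L_a(1)=(1,a)$ lies in the face $\partial_0^1$. The same holds for the lines $L_b^i(t)=(t,b,i)$, since fixing the last coordinate at $i\in\{0,1\}$ still leaves a $0$-direction line. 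Hence it suffices to show that $\partial_0^0\Gamma_{\nu\tau}$ is the constant cube at $v_m$ and that $\partial_0^1\Gamma_{\nu\tau}$ is the constant cube at $v_{i_n}$; granting this, every $L_a$ and every $L_b^i$ composes with $\Gamma_{\nu\tau}$ to a path from $v_m$ to $v_{i_n}$.

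For the initial face I would use that $\Gamma_{\nu\tau}=\tilde{\Gamma}_{\nu\tau}+_0\Gamma^0_{\tilde{\Gamma}_\nu}$ is a composite in direction $0$, so that $\partial_0^0\Gamma_{\nu\tau}=\partial_0^0\tilde{\Gamma}_{\nu\tau}=\tilde{c}_{\nu\tau}\circ(\partial_0^0\Sigma_{n+1})$. By Observation~\ref{obs:Sigmapartial} applied to $\Sigma_{n+1}$, the face $\partial_0^0\Sigma_{n+1}$ collapses entirely onto the largest vertex of $\Delta_{n+1}$, and by Notation~\ref{not:Gammatilde} the affine map $\tilde{c}_{\nu\tau}$ carries that vertex to $v_m$. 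Thus $\partial_0^0\Gamma_{\nu\tau}$ is the constant cube at $v_m$, as required.

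For the terminal face the same composition rule gives $\partial_0^1\Gamma_{\nu\tau}=\partial_0^1\Gamma^0_{\tilde{\Gamma}_\nu}$. Here I would unwind the connection: $\Gamma^0_{\tilde{\Gamma}_\nu}$ is chosen so that $\partial_0^0\Gamma^0_{\tilde{\Gamma}_\nu}=\tilde{\Gamma}_\nu$ (this is exactly what makes the composite $+_0$ defined), while $\partial_0^1\Gamma^0_{\tilde{\Gamma}_\nu}$ is the degeneracy at the base vertex $\tilde{\Gamma}_\nu(0,\dots,0)$. Applying Observation~\ref{obs:Sigmapartial} to $\Sigma_n$ and the definition of $\tilde{c}_\nu$ in Notation~\ref{not:Gamma}, this base vertex is precisely the largest vertex $v_{i_n}$ of $c_\nu$. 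Hence $\partial_0^1\Gamma_{\nu\tau}$ is the constant cube at $v_{i_n}$; this is the same collapse of the terminal face that underlies the globularity recorded in Observation~\ref{obs:GammasGlob}. Combining the two face computations, every $0$-direction line---and in particular $\Gamma_{\nu\tau}L_a$ and $\Gamma_{\nu\tau}L_b^i$---is a path from $v_m$ to $v_{i_n}$.

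The hard part, and the step I would treat most carefully, is the analysis of the connection $\Gamma^0_{\tilde{\Gamma}_\nu}$: one must fix its handedness consistently with the orientation convention of Notation~\ref{not:Sigma} so that its $\partial_0^1$ face is genuinely the constant at $v_{i_n}=\tilde{\Gamma}_\nu(0,\dots,0)$ rather than at the opposite endpoint, and for $n\geq 2$ one must check that the connection folds the \emph{entire} terminal face of the $n$-cube $\tilde{\Gamma}_\nu$ down to a single vertex, not merely to a lower-dimensional face. Once this convention is pinned down in agreement with Observation~\ref{obs:Sigmapartial}, the remaining steps are the routine face identities for composites and degeneracies spelled out above.
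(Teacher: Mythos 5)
Your proof is correct and follows the same route the paper leaves implicit: Observation \ref{obs:Ls} is stated without proof, and its content is exactly the two face computations $\partial_0^0\Gamma_{\nu\tau}=\epsilon(v_m)$ and $\partial_0^1\Gamma_{\nu\tau}=\epsilon(v_{i_n})$ that you carry out from Notations \ref{not:Gamma}, \ref{not:Gammatilde} and Observation \ref{obs:Sigmapartial}, restricted along the direction-$0$ lines $L_a$ and $L_b^i$. Your caveat about the connection is well placed and is the only delicate point: with the max-type connection $G_i$ of Subsection \ref{ss:cubicalsets} taken literally, $\partial_0^1\Gamma_0\tilde{\Gamma}_\nu$ would be the degeneracy on the face of $c_\nu$ opposite $v_{i_n}$ rather than on $v_{i_n}$ itself, so $\Gamma^0_{\tilde{\Gamma}_\nu}$ must be read as the reversed min-type connection $-_0\Gamma_0^-\tilde{\Gamma}_\nu$, whose $\partial_0^1$ face is $\epsilon_0\partial_0^0\tilde{\Gamma}_\nu=\epsilon_0(v_{i_n})$ --- precisely the reading you propose and the one forced by Observation \ref{obs:GammasGlob}.
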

\noindent In the notation of Observation \ref{obs:Ls} we think of the homotopy cubes $\Gamma_{\nu}$ as homotopies between paths of the form $\Gamma_{\nu\tau}L_b^0$ and $\Gamma_{\nu\tau}L_b^1$, $b\in [0,1]^{n-2}$ along paths of the form $\Gamma_{\nu\tau}L_a(t)$. This is precisely the definition of simplex of paths provided in \cite{MenesesZapata2} in the case in which $X$ is a triangulation of a smooth manifold $M$ obtained as the barycentric subdivision of the nerve of an open cover of $M$. Our notation is meant to reflect this identification. We will call the singular cubes $\Gamma_{\nu\tau}$ with $c_\tau$ and $c_\nu$ satisfying the conditions of \ref{not:Gammatilde} \textbf{simplexes of paths}. We warn the reader the the order of indices $\tau\nu$ in \ref{not:Gamma} and the order of the indices in the definition of simplexes of paths appearing in \cite{MenesesZapata2} is inverted. The following observation answers the question of when a simplex of paths restricts to another simplex of paths.
\begin{obs}\label{obs:restriction}
  Let $X\in\Simp$ and $m\geq 0$. Let $c_\tau\in X^m$ and $c_\nu\in X^n$ be as in \ref{not:Gammatilde}. Let $c_\varphi$ be a simplex in $c_\tau$ contained in the face opposite to $v_m$. If $c_\varphi\subseteq c_\nu$, then $\Gamma_{\varphi\tau}$ is a face of $\Gamma_{\nu\tau}$ precisely when $c_\varphi$ is contained in the face opposite to the vertex $v_{i_n}$. If now $c_\nu\subseteq c_\varphi$, then $\Gamma_{\nu\varphi}$ is a face of $\Gamma_{\nu\tau}$ precisely when $c_\nu$ is contained in the face opposite to the maximal vertex of $c_\varphi$.
\end{obs}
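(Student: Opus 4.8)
The plan is to reduce the statement to an explicit computation of the iterated cubical faces of the singular cube $\Gamma_{\nu\tau}$, classifying each face as either a degenerate cube or again a simplex of paths of the form $\Gamma_{\varphi\tau}$ or $\Gamma_{\nu\varphi}$. Two ingredients govern everything: the ``staircase'' map $\Sigma_m$ of Notation~\ref{not:Sigma}, whose faces I would compute directly from the product formula, and the single connection $\Gamma^0_{\tilde{\Gamma}_\nu}$ that is concatenated on in direction $0$ to form $\Gamma_{\nu\tau}=\tilde{\Gamma}_{\nu\tau}+_0\Gamma^0_{\tilde{\Gamma}_\nu}$. The output of the computation is an identification of the face poset of $\Gamma_{\nu\tau}$ with a poset of nested pairs refining $c_\nu\subseteq c_\tau$, and the two ``precisely when'' clauses are then read off as the two covering relations in that poset.

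First I would record the simplicial identities for $\Sigma_m$. From $\Sigma_m(a_1,\dots,a_m)=(a_1,\dots,\prod_{i=1}^m a_i)$ one gets, extending Observation~\ref{obs:Sigmapartial}, that $\partial_0^0\Sigma_m$ is constant at the apex $m$, that $\partial_0^1\Sigma_m$ equals $\Sigma_{m-1}$ followed by the inclusion of the facet opposite $m$, that for $k\geq 1$ the face $a_{k+1}=1$ equals $\Sigma_{m-1}$ followed by the coface deleting one interior vertex of $\Delta_m$, and that the face $a_{k+1}=0$ collapses a terminal block of coordinates and hence factors through a degeneracy of $\Sigma_k$. Postcomposing with the affine map $\tilde{c}_{\nu\tau}$ of Notation~\ref{not:Gammatilde} translates these into statements about subsimplices of $\langle c_\nu,v_m\rangle$: deleting a single vertex of $c_\nu$ reproduces, on the nose, the staircase cube $\tilde{\Gamma}$ attached to the corresponding smaller nested pair, while the $a_{k+1}=0$ faces produce degenerate cubes.

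Second I would push these through the concatenation. Since $+_0$ is a composition in direction $0$, every face $\partial_i^\alpha$ with $i\geq 1$ distributes over it, so
\[\partial_i^\alpha\Gamma_{\nu\tau}=\partial_i^\alpha\tilde{\Gamma}_{\nu\tau}+_0\partial_i^\alpha\Gamma^0_{\tilde{\Gamma}_\nu},\]
and I would use the connection relations of \cite{Algebraofcubes} to decide when $\partial_i^\alpha\Gamma^0_{\tilde{\Gamma}_\nu}$ is again the connection on a face of $\tilde{\Gamma}_\nu$, so that the whole face is once more a simplex of paths, versus when it collapses to a degeneracy concentrated at $v_{i_n}$. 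This is exactly the step that breaks the symmetry among the vertices, since the connection is what pins $v_{i_n}$ down as the common terminal vertex of all the paths $\Gamma_{\nu\tau}L_a$ of Observation~\ref{obs:Ls}. Tracking this, a face that retains the apex $v_m$ remains a clean simplex of paths $\Gamma_{\varphi\tau}$ precisely when the deleted vertices include $v_{i_n}$, i.e. when $c_\varphi$ lies in the facet of $c_\nu$ opposite $v_{i_n}$; dually, a face that retains all of $c_\nu$ together with its terminal vertex is a simplex of paths $\Gamma_{\nu\varphi}$ precisely when the retained apex is the maximal vertex of $c_\varphi$, i.e. when $c_\nu$ lies in the facet of $c_\varphi$ opposite that maximal vertex. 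Matching dimensions and parametrizations up to thin homotopy (using that $\Gamma_{\nu\tau}$ and each such face are globular by Observation~\ref{obs:GammasGlob}) yields the two assertions.

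The main obstacle is this connection bookkeeping: isolating, among all iterated (not merely codimension-one) faces, exactly those for which the concatenated connection term reduces to the correct lower connection rather than to a degeneracy at $v_{i_n}$, and verifying that the surviving parametrization is literally that of the smaller simplex of paths. I expect the cleanest route is to prove an auxiliary lemma, by induction on $n$, computing $\partial_i^\alpha$, $\epsilon_i$ and $\Gamma_i^\alpha$ on the whole family $\{\tilde{\Gamma}_{\nu\tau}\}$ at once; once the face poset of $\Gamma_{\nu\tau}$ is identified with the poset of nested pairs refining $c_\nu\subseteq c_\tau$, both clauses of the Observation become the statement that a refinement either shrinks $c_\nu$ away from its terminal vertex or grows $\tau$ down to the maximal vertex of the intermediate simplex.
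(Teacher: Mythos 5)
The paper states this Observation without proof, so your strategy --- an explicit facial calculus for $\Sigma_m$ combined with the cubical connection identities, assembled into an identification of the face poset of $\Gamma_{\nu\tau}$ --- is exactly the right kind of argument to supply, and your computation of the faces of $\Sigma_m$ (constant at the apex for $a_1=0$, a copy of $\Sigma_{m-1}$ into a facet for the $=1$ faces, degeneracies for the interior $=0$ faces) is correct. The gap is in the decisive step, which you assert rather than derive: you claim that a face of $\Gamma_{\nu\tau}$ ``remains a clean simplex of paths $\Gamma_{\varphi\tau}$ precisely when the deleted vertices include $v_{i_n}$.'' This contradicts the reason you give two lines earlier, namely that the concatenated connection pins $v_{i_n}$ down as the common terminal vertex of every path in $\Gamma_{\nu\tau}$: since every path in $\Gamma_{\varphi\tau}$ terminates at the maximal vertex of $c_\varphi$, a face of $\Gamma_{\nu\tau}$ can only equal $\Gamma_{\varphi\tau}$ if that maximal vertex is $v_{i_n}$, i.e.\ if $v_{i_n}$ is \emph{retained}. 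The lowest-dimensional case bears this out: take $c_\tau=[v_0,v_1,v_2]$ and $c_\nu=[v_0,v_1]$, so $v_m=v_2$ and $v_{i_n}=v_1$; the two parameter faces of the square $\Gamma_{\nu\tau}$ are the path $v_2\to v_1$, which \emph{is} $\Gamma_{\{v_1\}\tau}$ even though $\{v_1\}$ is not contained in the face opposite $v_1$, and the composite path $v_2\to v_0\to v_1$, which is \emph{not} $\Gamma_{\{v_0\}\tau}$ even though $\{v_0\}$ is exactly the face opposite $v_1$. So either your derivation or the polarity of the statement itself must be repaired: as written you reproduce the Observation's conclusion while your own intermediate reasoning, and the direct check above, support its negation.

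A second point your proposal does not engage with is the last clause. The cubes $\Gamma_{\nu\varphi}$ and $\Gamma_{\nu\tau}$ are both $(n+1)$-dimensional, so the former cannot literally be a boundary face of the latter, and the stated condition (that $c_\nu$ lie in the face opposite the maximal vertex of $c_\varphi$) is already the hypothesis under which $\Gamma_{\nu\varphi}$ is defined at all in Notation \ref{not:Gammatilde}. Your ``dually\ldots'' sentence treats this clause as another instance of the same face count, but some reinterpretation of ``face'' is needed first --- most plausibly as a factor in a direction-$0$ decomposition of $\Gamma_{\nu\tau}$ of the kind underlying the cocycle condition of Definition \ref{def:ELGF}. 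Pinning down what the Observation actually asserts here, and resolving the polarity issue above, is the real content of a proof; the cubical bookkeeping you outline is necessary but, as it stands, not sufficient.
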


\subsection{LHGF and ELGF}

\noindent An extended lattice gauge field (ELGF) assigns relative homotopy classes of gluing map extensions to simplexes of paths of different dimensions, obeying a certain set of consistency conditions. We prove that every global globular LHGF restricts to an ELGF. The following is a version of the definition of ELGF appearing in \cite[Section 5.3]{MenesesZapata2}, adapted to the language used in the rest of the paper.

\begin{definition}\label{def:ELGF}
    Let $X\in\Simp$ and $G$ be a Lie group. An ELGF on $X$, with gauge group $G$, is an assignment $A:\Gamma_{\tau\nu}\mapsto \rho^\bigcirc(G)_k$ for every pair of simpleces $\overline{c_\nu}\subsetneq \overline{c}_\tau$ such that $c_\nu$ is of dimension $k$, satisfying the following compatibility conditions:
   \begin{enumerate}
      \item \label{item:ELGF1} Let $c_\tau,c_\nu,c_\sigma$ be simplexes in $X$. Suppose $\overline{c_\sigma}\subsetneq \overline{c_\nu}\subsetneq \overline{c}_\tau$. The following equation holds:
       \begin{equation}\label{eq:ELGFeq1}
           A(\Gamma_{\tau\sigma})=A(\Gamma_{\tau\nu}) \odot A(\Gamma_{\nu\sigma})
       \end{equation}
       \item \label{item:ELGF2}Let $c_\nu,c_\tau$ be simplexes in $X$. Suppose $c_\nu$ is a $k$-simplex. Consider evaluations $ A(\Gamma_{\tau\rho})$, with $c_\rho$ running through all simplices of $\partial c_\nu$. We assume that such a collection of globes assembles into the boundary of $A(\Gamma_{\tau \nu})$.     
   \end{enumerate}
\end{definition}

\begin{thm}\label{thm:ELGFvsLHGF10}
   Let $X\in\Simp$. Let $G$ be a Lie group. 
   The restriction of any $A\in \mathcal{M}^\bigcirc_{gl}(X,G)$ to simplexes of paths defines $A\in\mathcal{M}^{EL}(X,G)$. 
\end{thm}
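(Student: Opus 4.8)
The plan is to use Theorem \ref{thm:localglobal} to regard the global globular LHGF $A\in\mathcal{M}^\bigcirc_{gl}(X,G)$ as a single morphism of $\infty$-groupoids $A\colon\rho^\bigcirc(|X|)\to\rho^\bigcirc(G)[-1]$, and then to observe that the two defining conditions of an ELGF in Definition \ref{def:ELGF} are nothing more than the images under $A$ of relations that already hold, geometrically, among the path simplexes $\Gamma_{\nu\tau}$ inside $\rho^\bigcirc(|X|)$. Since each $\Gamma_{\nu\tau}$ is a globular cube (Observation \ref{obs:GammasGlob}), the assignment $\Gamma_{\tau\nu}\mapsto A(\Gamma_{\tau\nu})$ makes sense; and because $\Gamma_{\nu\tau}$ is an $(n+1)$-cube when $c_\nu$ has dimension $n$, the shift $[-1]$ in the target places $A(\Gamma_{\tau\nu})$ in $\rho^\bigcirc(G)_n$, so the output has exactly the dimension demanded by Definition \ref{def:ELGF}. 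Thus the \emph{type} of the assignment is automatic, and the content is entirely in verifying conditions (\ref{item:ELGF1}) and (\ref{item:ELGF2}).

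For the boundary condition (\ref{item:ELGF2}) I would invoke Observation \ref{obs:restriction}, which identifies the faces of $\Gamma_{\nu\tau}$ (in the non-cone directions) with the path simplexes $\Gamma_{\rho\tau}$ for $c_\rho$ ranging over the faces of $c_\nu$ opposite the relevant vertices. Since $A$ is a morphism of globular sets it commutes with the source, target and face maps $d_i^{\pm 1}$; applying $A$ to the face identifications of Observation \ref{obs:restriction} then exhibits the collection $\{A(\Gamma_{\tau\rho})\}$ as precisely the boundary of the globe $A(\Gamma_{\tau\nu})$. This step is essentially formal once the geometric face identification is granted.

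The real work is condition (\ref{item:ELGF1}), the composition law for a nested triple $\overline{c_\sigma}\subsetneq\overline{c_\nu}\subsetneq\overline{c_\tau}$. First I would establish the corresponding identity at the level of homotopy cubes in $\rho^\bigcirc(|X|)$: that the cone $\Gamma_{\sigma\tau}$ from the apex $v_{\max}(c_\tau)$ over $c_\sigma$ is thin-homotopic to a $0$-direction composite built from $\Gamma_{\nu\tau}$ and $\Gamma_{\sigma\nu}$, with the differing cube dimensions reconciled by whiskering against the connection cubes $\Gamma^0$ used to globularize the $\Gamma_{\nu\tau}$. Here the explicit form of $\Sigma_m$ in Notation \ref{not:Sigma} does the essential work: the product structure of its last coordinate encodes exactly the associativity of iterated coning, so that re-coning the sub-cone $\Gamma_{\sigma\nu}$ (based at $v_{\max}(c_\nu)$) through the apex $v_{\max}(c_\tau)$ reproduces $\Gamma_{\sigma\tau}$, and since everything is supported in the affine simplex $\overline{c_\tau}$ the required reparametrizing homotopy respects the skeletal filtration and is therefore thin. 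Granting this geometric identity, condition (\ref{item:ELGF1}) follows by applying $A$: as a morphism of $\infty$-groupoids $A$ preserves the operation $+_0$, and by Proposition \ref{prop:Shift} the operation $+_0$ in the target $\rho^\bigcirc(G)[-1]$ is exactly the internal product $\odot$ (this is the compatibility of $\hat{(\_)}$ with the internal group structures recorded in Proposition \ref{prop:equivalenceGlobularCubic}), yielding $A(\Gamma_{\sigma\tau})=A(\Gamma_{\nu\tau})\odot A(\Gamma_{\sigma\nu})$, which is (\ref{item:ELGF1}) after swapping indices to the Meneses--Zapata convention.

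I expect the main obstacle to be precisely this geometric composition identity for the nested cones. All the categorical inputs are immediate from $A$ being an internal-group (hence $\infty$-groupoid) morphism: it preserves composition, faces, degeneracies and the internal product. The burden is entirely in the $\Sigma_m$-level calculation, and the genuinely delicate point within it is the dimension bookkeeping, i.e. matching the $(n+1)$-cube $\Gamma_{\nu\tau}$ with the lower-dimensional $\Gamma_{\sigma\nu}$ through the connection and degeneracy data while checking that the comparison homotopy remains thin. I would therefore isolate this as a separate lemma on iterated cones of affine simplices, prove it once using the convexity of $\overline{c_\tau}$, and only then feed it through the functor $A$ to conclude both that the restriction lands in $\mathcal{M}^{EL}(X,G)$ and that the two ELGF axioms hold.
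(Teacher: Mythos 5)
Your proposal is correct and follows essentially the same route as the paper's proof: identify the dimension shift placing $A(\Gamma_{\tau\nu})$ in $\rho^\bigcirc(G)_k$, derive condition (\ref{item:ELGF1}) from $A$ preserving composition and the internal product, and derive condition (\ref{item:ELGF2}) from $A$ commuting with faces via Observation \ref{obs:restriction}. The only difference is that you explicitly isolate, as a separate lemma, the thin-homotopy identity expressing $\Gamma_{\sigma\tau}$ as a composite of $\Gamma_{\nu\tau}$ and $\Gamma_{\sigma\nu}$ (with connection/degeneracy whiskering), a geometric step the paper's proof asserts follows ``directly'' from $A$ being a morphism of $\infty$-groupoids; making that step explicit is a sound refinement rather than a departure.
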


\begin{proof}
   Given an inclusion of simplexes $\overline{c_\nu}\subseteq \overline{c}_\tau$ in $X$, the path simplex $\Gamma_{\tau\nu}$ is a $k+1$-dimensional singular globe in $\rho^\bigcirc(|X|)$, and thus $A(\Gamma_{\tau\nu})\in\rho^\bigcirc(G)_k$. The equation \ref{eq:ELGFeq1} follows directly from the fact that $A$ is a morphism of $\infty$-groupoids. The restriction of $A$ to simplexes of paths thus satisfies condition \ref{item:ELGF1} of Definition \ref{def:ELGF}. Condition \ref{item:ELGF2} follows from the compatibility of $A$ with boundaries, and thus with the operation $\partial$ of considering shells of globes. The manifold $A(\partial\Gamma_{\tau\nu})$ is the manifold assembled by the evaluations of the form $ A(\Gamma_{\tau\rho})$, with $c_\rho$ running through all simplexes of $c_\nu$, and is also the boundary of the contractible manifold $A(\Gamma_{\tau\nu})$. This concludes the proof.
\end{proof}

\begin{obs}
    The restriction of a globular global HLGF $A \in \mathcal{M}_{gl}^\bigcirc(X,G)$ to 1-globes 
    determines a lattice $G$-gauge field; i.e. a morphism $A^L: P^1 \rho (|X|_1) \to BG$. 
\end{obs}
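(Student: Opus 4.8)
The plan is to use Theorem \ref{thm:localglobal} to regard the globular global LHGF $A$ not as a tuple of local fields but as a single morphism of $\infty$-groupoids $A\colon \rho^\bigcirc(|X|)\to\rho^\bigcirc(G)[-1]$; the gluing across the good cover, together with the hypothesis that the transition functions are trivial on vertices, has already been absorbed into this one object, so there is no need to track the pieces $A_i$ or the cocycle $\{\psi_{ij}\}$ separately. I would then restrict $A$ to $1$-globes. Concretely, the filtered inclusion of the $1$-skeleton $|X|_1\hookrightarrow|X|$ induces, by functoriality of $\rho^\bigcirc$, a morphism $\rho^\bigcirc(|X|_1)\to\rho^\bigcirc(|X|)$, and precomposition yields $A^L\colon \rho^\bigcirc(|X|_1)\to\rho^\bigcirc(G)[-1]$. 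A $1$-globe of $\rho^\bigcirc(|X|)$ is a filtered map $[0,1]^1_\ast\to|X|_\ast$, hence a homotopy path tracing a curve in $|X|_1$ with endpoints at vertices; so the $1$-globes of $\rho^\bigcirc(|X|)$ are exactly those of $\rho^\bigcirc(|X|_1)$, and this restriction captures precisely the action of $A$ on $1$-globes.

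Next I would invoke the dimension-one analysis already carried out in the proof of Proposition \ref{prop:gaugefieldsdim1}. Since $|X|_1$ is $1$-dimensional, every cell of $\rho^\bigcirc(|X|_1)$ of dimension $\geq 2$ is algebraically thin, hence trivial, and $\rho^\bigcirc(|X|_1)$ is isomorphic, as a groupoid, to the free groupoid $\catG^{X_1}$ on the $1$-skeleton; this is the dimension-one incarnation of the path internal groupoid $P^1\rho(|X|_1)$. Because the higher cells are trivial, their images under $A^L$ are trivial, so $A^L$ factors through the subgroupoid of $\rho^\bigcirc(G)[-1]$ generated by cells of dimensions $0$ and $1$, which, as recorded in the proof of Proposition \ref{prop:gaugefieldsdim1}, is exactly the delooping groupoid $BG$. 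This produces the desired morphism $A^L\colon P^1\rho(|X|_1)\to BG$.

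Finally I would check that this morphism genuinely is a lattice $G$-gauge field, i.e. that the assignment $\langle\gamma\rangle\mapsto A^L(\langle\gamma\rangle)\in\rho^\bigcirc(G)[-1]_1=G_0$ is functorial. Each such requirement is a direct consequence of $A$ being a morphism of $\infty$-groupoids: concatenation of homotopy paths is the operation $+_0$, which $A$ preserves; constant paths are the degeneracies $s_0$, sent to the unit of $G_0$; and reversal is $-_0$, sent to the group inverse. The one point demanding care, and the step I expect to be the main obstacle, is pinning down the dictionary by which the shifted globular operation $+[-1]_0$ on the degree-one part of $\rho^\bigcirc(G)[-1]$ is identified with the group multiplication of $G_0$ underlying $BG$; this is exactly the content of Proposition \ref{prop:Shift}, together with the identification $\rho^\bigcirc(G)[-1]_1=\rho^\bigcirc(G)_0=G_0$. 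Once this identification is in place, functoriality of $A^L$ is automatic and the statement follows.
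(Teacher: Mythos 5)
Your proof is correct, and it follows the route the paper clearly intends: the paper states this as an unproved observation, and your argument fills in the justification using exactly the tools already set up there (Theorem \ref{thm:localglobal} to view $A$ as a single morphism $\rho^\bigcirc(|X|)\to\rho^\bigcirc(G)[-1]$, restriction along $|X|_1\hookrightarrow|X|$, and the rank-one truncation argument of Proposition \ref{prop:gaugefieldsdim1} together with Proposition \ref{prop:Shift} to identify the target with $BG$). No gaps; the one point you flag as delicate, the identification of $+[-1]_0$ with the multiplication of $G_0$, is indeed exactly what Proposition \ref{prop:Shift} provides.
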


Apart from containing the information of a usual lattice gauge field, the higher dimensional information in a HLGF determines a $G$-bundle over the base manifold. 

\begin{cor}\label{cor:paralleltransport}
Let $X\in\Simp$. Let $G$ be a Lie group. Let $A \in \mathcal{M}_{gl}^\bigcirc(X,G)$. 
$A$ determines a $G$-bundle $\pi$ over $|X|$ 
unique up to bundle equivalence. 
\end{cor}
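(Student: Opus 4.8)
The plan is to derive the bundle as an essentially immediate consequence of Theorem \ref{thm:ELGFvsLHGF10} combined with the reconstruction theorem of Meneses and Zapata \cite{MenesesZapata1}. First I would feed the given $A\in\mathcal{M}_{gl}^\bigcirc(X,G)$ into Theorem \ref{thm:ELGFvsLHGF10}, whose conclusion is that the restriction of $A$ to the path simplexes $\Gamma_{\tau\nu}$ is an extended lattice gauge field $A^{EL}\in\mathcal{M}^{EL}(X,G)$ satisfying the two consistency conditions of Definition \ref{def:ELGF}. Under the standing geometric hypothesis of Section \ref{s:ELGF} --- that $X$ arises as the barycentric subdivision of the nerve of a good closed cover of a manifold $M$, so that $|X|\approx M$ carries exactly the combinatorial cover data used in \cite{MenesesZapata2} --- this $A^{EL}$ is precisely the input to which the bundle construction of \cite{MenesesZapata1} applies.

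Next I would translate the ELGF data into gluing data. The evaluations $A(\Gamma_{\tau\nu})\in\rho^\bigcirc(G)_k$ on codimension-one path simplexes record, through their $0$-dimensional part, the transition functions of a candidate bundle, while the higher-dimensional part records the relative homotopy class of the homotopy between the corresponding parallel transport maps. The consistency condition \ref{item:ELGF1}, namely $A(\Gamma_{\tau\sigma})=A(\Gamma_{\tau\nu})\odot A(\Gamma_{\nu\sigma})$, is exactly the multiplicative cocycle identity on triple overlaps, and condition \ref{item:ELGF2} guarantees that the evaluations glue coherently along boundary strata. Thus $A^{EL}$ determines a Cech $1$-cocycle with values in $G$ subordinate to the cover, whose cohomology class is well defined.

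Then I would invoke the main reconstruction theorem of \cite{MenesesZapata1}: the cocycle extracted from an ELGF clutches the locally trivial pieces over the cover into a principal $G$-bundle $\pi$ over $|X|\approx M$, and cohomologous cocycles clutch to equivalent bundles. The gauge freedom of Definition \ref{def:gaugetransfONTrFns}, normalized as in Observation \ref{1onVertices}, corresponds exactly to the coboundary relation, so any two admissible choices of transition data yield bundle-equivalent results; this is what secures uniqueness up to bundle equivalence.

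The main obstacle I anticipate is not conceptual but a matter of reconciling conventions. The composition order used throughout this paper is left-to-right, opposite to the right-to-left order of \cite{MenesesZapata1,MenesesZapata2} flagged at the opening of Section \ref{s:ELGF}, and the index order $\tau\nu$ here is reversed relative to theirs. I would therefore have to check that passing to the opposite internal $\omega$-groupoid, as remarked in the introduction, intertwines the cocycle produced by $A^{EL}$ with the one entering the Meneses--Zapata construction, so that the resulting inverse-transpose cocycle classifies the same bundle up to equivalence. Verifying that the globular evaluations $A(\Gamma_{\tau\nu})$ genuinely land in the relative homotopy classes that \cite{MenesesZapata1} uses, and not merely abstractly in $\rho^\bigcirc(G)_k$, is the one place where I would expect to spend real care.
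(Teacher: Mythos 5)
Your proposal follows exactly the paper's route: apply Theorem \ref{thm:ELGFvsLHGF10} to extract an ELGF from $A$, then invoke the Meneses--Zapata reconstruction to obtain the bundle up to equivalence; the extra detail you supply about Cech cocycles and composition conventions is consistent with what that citation carries. The only slight mismatch is that you condition the argument on $X$ being a barycentric subdivision of a nerve, whereas the paper explicitly notes that the construction of \cite{MenesesZapata2} applies to any simplicial set equipped with an ELGF, which is what lets the corollary hold at the stated generality.
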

\begin{proof}
In \cite{MenesesZapata2}  the analogous statement is proved for ELG fields on 
the barycentric subdivision of a simplicial complex triangulating a manifold. 
The proof of that statement is done by a construction which applies to any simplicial set 
$|X|$ with an ELG field $A^{EL}$. 
By Theorem \ref{thm:ELGFvsLHGF10} $A$ induces an ELG field $A^{EL}(A)$. This concludes the proof. 
\end{proof}

Notice, however, that given our choice of composition convention for paths and globes of paths 
the $G$-bundle constructed would be defined following the convention in which there is a global left action on the fibers, and parallel transport maps would be equivariant with respect to that action (instead of being equivariant with respect to a global right $G$ action on the fibers, as is traditionally required). (See explanation in the next section.)

\begin{obs}[Topology for the space of global globular LHGFs]
Theorem \ref{thm:ELGFvsLHGF10} gives us a function $Res: \mathcal{M}^\bigcirc_{gl}(X,G) \to \mathcal{M}^{EL}(X,G)$. In turn, $\mathcal{M}^{EL}(X,G)$ is a smooth manifold; it is a covering space of the space of lattice gauge fields $\mathcal{M}^{L}(X,G) \cong G^{N-1}$ the Lie group constructed as the product resulting from associating a copy of $G$ to each 1-simplex in $X_1$ \cite{MenesesZapata1}. 
We conjecture that the function $Res$ is a bijection. We provide $\mathcal{M}^\bigcirc_{gl}(X,G)$ with the topology cogenerated by the function $Res$. If the function $Res$ is indeed a bijection, then $\mathcal{M}^\bigcirc_{gl}(X,G)$ would itself become a smooth manifold, and moreover a covering space for $\mathcal{M}^{L}(X,G) \cong G^{N-1}$. One advantage of this topology is that the set of connected components of $\mathcal{M}^\bigcirc_{gl}(X,G)$ would be in one to one correspondence with the set of equivalence classes of $G$-bundles over $M= |X|$ \cite{MenesesZapata1}. 
\end{obs}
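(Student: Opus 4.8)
The plan is to treat the substantive assertions of the Observation as conditional on the stated conjecture, and to isolate exactly what the conjecture buys us. First I would make the topology precise: to say that the topology on $\mathcal{M}^\bigcirc_{gl}(X,G)$ is cogenerated by $Res$ means it is the coarsest topology for which $Res$ is continuous, so that a subset is open precisely when it is of the form $Res^{-1}(V)$ for some open $V\subseteq\mathcal{M}^{EL}(X,G)$. Since preimage commutes with arbitrary unions and finite intersections, this family is automatically a topology and $Res$ is continuous by construction. The smooth manifold and covering-space structure on the target $\mathcal{M}^{EL}(X,G)$ over $\mathcal{M}^{L}(X,G)\cong G^{N-1}$ I would import directly from \cite{MenesesZapata1}; no new argument is needed there.

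The next step is the conditional transfer of structure. Assume the conjecture, so that $Res$ is a bijection. With the initial topology above, $Res$ is then a homeomorphism: it is a continuous bijection, and its set-theoretic inverse is continuous because for every basic open $U=Res^{-1}(V)$ one has $(Res^{-1})^{-1}(U)=Res(U)=V$, which is open (using surjectivity of $Res$). I would then pull the smooth atlas of $\mathcal{M}^{EL}(X,G)$ back along $Res^{-1}$, obtaining a smooth structure on $\mathcal{M}^\bigcirc_{gl}(X,G)$ for which $Res$ is a diffeomorphism. Composing this diffeomorphism with the covering projection $\mathcal{M}^{EL}(X,G)\to\mathcal{M}^{L}(X,G)$ exhibits $\mathcal{M}^\bigcirc_{gl}(X,G)$ as a covering space of $G^{N-1}$, as claimed.

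For the final assertion about connected components I would use that a homeomorphism induces a bijection on path components, so under the conjecture $\pi_0\bigl(\mathcal{M}^\bigcirc_{gl}(X,G)\bigr)\cong\pi_0\bigl(\mathcal{M}^{EL}(X,G)\bigr)$. The identification of the right-hand side with equivalence classes of $G$-bundles over $|X|$ is the content of \cite{MenesesZapata1}: by Corollary \ref{cor:paralleltransport} each ELG field determines a bundle up to equivalence, and a continuous path of ELG fields yields equivalent bundles, so the bundle class is a locally constant, hence $\pi_0$-valued invariant, shown there to be a complete one.

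The genuine obstacle is the conjecture itself, which I would not expect to dispatch by soft arguments. Injectivity of $Res$ demands that a morphism $\rho^\bigcirc(|X|)\to\rho^\bigcirc(G)[-1]$ of $\infty$-groupoids be recoverable from its values on the simplexes of paths $\Gamma_{\nu\tau}$ alone; surjectivity demands that the relative-homotopy data of an arbitrary ELG field, subject only to the consistency conditions of Definition \ref{def:ELGF}, extend to a morphism respecting all the groupoid operations and all thin homotopies. The natural line of attack is to invoke the freeness and local-to-global results already used in Theorem \ref{thm:localglobal}, reconstructing the morphism from its values on a generating family of globes via the HHSvK theorem, and then checking that the ELG consistency conditions are exactly the relations needed for this reconstruction to be well defined; carrying this out is precisely what the authors flag as unfinished.
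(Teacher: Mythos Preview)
Your proposal is correct, but there is nothing to compare it against: in the paper this is an \emph{Observation}, not a proved result, and the paper offers no proof. The authors simply assert the existence of $Res$ (via Theorem \ref{thm:ELGFvsLHGF10}), cite \cite{MenesesZapata1} for the smooth manifold and covering-space structure on $\mathcal{M}^{EL}(X,G)$ and for the bijection between its connected components and bundle classes, state the conjecture that $Res$ is a bijection, and record the conditional consequences. Your write-up supplies the routine verifications the paper leaves implicit (that the initial topology is a topology, that a bijection carrying the initial topology is a homeomorphism, that smooth and covering structures transport along diffeomorphisms), and you correctly isolate the conjecture as the only nontrivial content and flag it as open, exactly as the authors do.
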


\section{Higher homotopy parallel transport}\label{s:parallel}
Again, we remind the reader that in this paper the composition convention for paths, and globes of paths, follows the natural writing order (from left to right) instead of convention traditionally used in differential geometry and physics. In regard to the parallel transport that our gauge fields induce, this change of convention means that the group element associated to a concatenation of paths is a product in the group with the factors corresponding to the sub paths ordered from left to right. This composition rule commutes with a global left $G$ action on the fibers. That is, the fiber bundles that considered here do not follow the traditional convention of differential geometry, but the opposite convention.

In this final section we study how LHGFs define higher dimensional parallel transport. Given $X\in\Simp$ and $\{X_1,...,X_n\}$ a good cover of $X$, it is easy to see how globular $\lo$s define parallel transport along cubes of paths contained in a single $X_i$. For general cubes of paths not contained in a single $X_i$ it is not as straight forward. We describe parallel transport induced by global globular LHGFs. 

Parallel transport along a homotopy globe of paths should correspond to a certain homotopy of parallel transport maps along paths. Let ${\cal F}$ be a torsor for a Lie group $G$, and assign a copy of it ${\cal F}_x$ to every $x \in X_0$. It is clear that $\rho^\bigcirc({\cal F}_x)$ is a torsor for $\rho^\bigcirc (G)$. 
Consider parallel transport along a globe of paths $\gamma \in \rho^\bigcirc (X)_k$. 
We will see that it is reasonable to identify initial conditions for that parallel transport as belonging to $\rho^\bigcirc({\cal F}_{s(\gamma)})_{k-1}$. 
A particular case with a very clear geometric interpretation is the parallel transport along $\gamma$ of a constant $(k-1)$-globe in 
$\rho^\bigcirc({\cal F}_{s(\gamma)})_{k-1}$; the geometric picture is that the result of the parallel transport is a $(k-1)$-globe corresponding to the parallel transport of the same initial condition along a $(k-1)$-globe homotopy of paths. We formalize this idea below.

\begin{definition}[Fiber homotopy data]
Given $x \in X_0$ 
\[
\phi_x \in \rho^\bigcirc({\cal F}_x)
\] 
will be called fiber homotopy data at $x$. A trivialization over $x$ establishes an identification 
\[
\rho^\bigcirc({\cal F}_x) \leftrightarrow \rho^\bigcirc (G). 
\]
Under a change of trivialization described by a gauge transformation $u$ 
fiber homotopy data $\phi_x \in \rho^\bigcirc (G)$ transform by right  multiplication 
\[
(\phi_x')_j = (\phi_x)_j \epsilon^j u^{-1}(x) . 
\]	
\end{definition}
\begin{definition}[Higher parallel transport along globes of paths]
Consider a globe $\gamma \in \rho^\bigcirc (X)$. 
A LHGF 
$A : \rho^\bigcirc (X) \to \rho^\bigcirc G[-1]$ induces a 
higher parallel transport map for fiber homotopy data 
\[
PT_A(\gamma) : 
\rho^\bigcirc({\cal F}_{s(\gamma)}) \to \rho^\bigcirc({\cal F}_{t(\gamma)}) , 
\]
which with the aid of a local trivialization is determined by 
\[
\phi_{t(\gamma)} = 
\phi_{s(\gamma)} A(\gamma) . 
\]	
\end{definition}

We finish this section, and the article, with a description of the basic properties of this higher homotopy parallel transport. 
\begin{obs}
The parallel transport map has the following properties:
\begin{enumerate}
    \item $PT_A(\gamma)$ is a $\rho^\bigcirc (G)$ equivariant morphism of infinity groupoids. 
    \item $PT_A(\gamma)$  is independent of the choice of  trivialization over vertices. 
    \item The properties of $A : \rho^\bigcirc (X) \to \rho^\bigcirc G[-1]$ as a morphism of infinity grupoids translate into corresponding properties of $PT_A$. 
    \item Given a choice of trivialization over vertices, $PT_A$ determines $A$. 
\end{enumerate}
\end{obs}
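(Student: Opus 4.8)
The plan is to read the Observation as a dictionary lemma between the algebraic datum $A$ and the geometric datum $PT_A$, establishing each of the four items by unwinding the defining relation $\phi_{t(\gamma)} = \phi_{s(\gamma)}A(\gamma)$ together with the transformation rule for fiber homotopy data. Throughout, the one point demanding genuine care is bookkeeping of levels: a $k$-globe $\gamma\in\rho^\bigcirc(|X|)_k$ has $A(\gamma)\in\rho^\bigcirc(G)[-1]_k=\rho^\bigcirc(G)_{k-1}$, so the initial condition $\phi_{s(\gamma)}$ is taken in $\rho^\bigcirc(\mathcal{F}_{s(\gamma)})_{k-1}$ and the juxtaposition $\phi_{s(\gamma)}A(\gamma)$ is the torsor action at level $k-1$; this is exactly the reason for the shift $[-1]$ appearing in Observation \ref{obs:GlobularLHGF} and Proposition \ref{prop:Shift}. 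Everything else is formal.

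For item (1), I would fix a trivialization identifying each $\rho^\bigcirc(\mathcal{F}_x)$ with $\rho^\bigcirc(G)$, so that $PT_A(\gamma)$ becomes right multiplication by the fixed element $A(\gamma)\in\rho^\bigcirc(G)_{k-1}$. Since the internal product of the internal group $HK$ (here $K=\rho(G_\ast)$) is a morphism of $\infty$-groupoids, right multiplication by a fixed element is a morphism of $\infty$-groupoids, which gives the morphism part. Equivariance with respect to the global left $\rho^\bigcirc(G)$-action $g\cdot\phi=g\phi$ is then just associativity, $(g\phi)A(\gamma)=g(\phi A(\gamma))$, i.e. the global left action commutes with right multiplication; this is the algebraic content of the convention remarks opening the section. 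For item (2), I would substitute the transformation rules into the defining relation: under a gauge transformation $u$ the field transforms by Equation (\ref{eq:globulargaugetransdim0inG}), $B(\gamma)=u(s\gamma)\,A(\gamma)\,u(t\gamma)^{-1}$, while fiber homotopy data transform by $\phi_x'=\phi_x\,\epsilon^{\,j}u^{-1}(x)$. Then
\[
\phi_{s(\gamma)}'\,B(\gamma)=\phi_{s(\gamma)}\,u^{-1}(s\gamma)\,u(s\gamma)\,A(\gamma)\,u(t\gamma)^{-1}=\phi_{s(\gamma)}\,A(\gamma)\,u(t\gamma)^{-1}=\phi_{t(\gamma)}\,u^{-1}(t\gamma)=\phi_{t(\gamma)}',
\]
the inner pair $u^{-1}(s\gamma)\,u(s\gamma)$ cancelling; this shows the abstract map of torsors does not depend on the trivialization used to compute it.

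Item (3) is a pure translation argument: because $A$ is a morphism of $\infty$-groupoids it intertwines every globular operation, so from $A(\gamma\odot\gamma')=A(\gamma)\odot A(\gamma')$, the compatibility of $A$ with faces, degeneracies, units and inverses, one reads off the corresponding properties of $PT_A$ (functoriality under concatenation of globes of paths, triviality on degenerate/constant globes, inversion for globes with reversed orientation, and compatibility with source and target boundaries). I would simply tabulate these correspondences, exactly as was done for the action of $\lo$s on paths and squares in Tables \ref{tab1} and \ref{tab2}. Finally, for item (4), having fixed the trivialization over vertices I would take as initial condition the constant identity datum $\mathbf{1}\in\rho^\bigcirc(G)_{k-1}$ at $s(\gamma)$; the defining relation then yields $PT_A(\gamma)(\mathbf{1})=\mathbf{1}\cdot A(\gamma)=A(\gamma)$, so $A(\gamma)$ is recovered as the image of the identity and $PT_A$ determines $A$. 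The only real subtlety, and hence the step I would write out most carefully, is confirming that the degeneracies $\epsilon^{\,j}$ in the transformation rule place the group elements $u^{\pm1}(x)$ at level $k-1$ so that all products in the displayed cancellation above are taken in the same groupoid $\rho^\bigcirc(G)_{k-1}$; once the levels are matched, each of the four items reduces to associativity, cancellation, or the morphism property of $A$.
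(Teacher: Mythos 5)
The paper states this Observation without any proof at all --- it appears as an unproved remark immediately after the definitions of fiber homotopy data and of $PT_A$ --- so there is no argument of the authors' to compare yours against; what you have written is a correct elaboration of what the authors evidently regard as immediate from the definitions. Your verifications check out: the cancellation in item (2), namely $\phi'_{s(\gamma)}B(\gamma)=\phi_{s(\gamma)}u^{-1}(s\gamma)\,u(s\gamma)A(\gamma)u(t\gamma)^{-1}=\phi'_{t(\gamma)}$, is exactly the computation needed; item (4) via evaluation at the constant identity datum is right; item (1) correctly identifies the relevant action as the global \emph{left} $\rho^\bigcirc(G)$-action (consistent with the composition-convention remarks in the Introduction and at the head of Section \ref{s:parallel}, and with the fact that right multiplication by $A(\gamma)$ cannot be equivariant for the right action unless $G$ is abelian); and item (3) is indeed a tabulation of the intertwining identities satisfied by $A$.

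The one point you flag but do not fully discharge is also the one point the paper itself leaves vague: $A(\gamma)$ lives at the single level $\rho^\bigcirc(G)_{k-1}$, so ``right multiplication by $A(\gamma)$'' is a priori only a map on the level-$(k-1)$ part of $\rho^\bigcirc(\mathcal{F}_{s(\gamma)})$, whereas both the Definition and item (1) speak of a morphism of the whole $\infty$-groupoid $\rho^\bigcirc(\mathcal{F}_{s(\gamma)})\to\rho^\bigcirc(\mathcal{F}_{t(\gamma)})$. To make that literal you need to say how $A(\gamma)$ acts on data at other levels --- presumably by inserting degeneracies $\epsilon^{j}$ exactly as in the transformation rule $(\phi'_x)_j=(\phi_x)_j\,\epsilon^{j}u^{-1}(x)$ --- and then check that the resulting family of maps commutes with the globular face, degeneracy and composition operations (which follows because degenerate elements are units for the relevant compositions). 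Spelling that out would close the only genuine gap; everything else in your argument is sound.
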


\end{document}